\documentclass[11pt]{article}

\usepackage[a4paper,margin=1in]{geometry}
\usepackage{amsmath,amssymb,amsthm,mathtools}
\usepackage{authblk}
\usepackage{enumitem}
\usepackage{graphicx}
\usepackage{mathrsfs}
\usepackage{microtype}
\usepackage{algorithm}
\usepackage{algpseudocode}
\usepackage{hyperref}

\hypersetup{
  colorlinks=true,
  linkcolor=blue,
  citecolor=blue,
  urlcolor=blue
}

\numberwithin{equation}{section}

\newtheorem{theorem}{Theorem}[section]
\newtheorem{lemma}[theorem]{Lemma}
\newtheorem{proposition}[theorem]{Proposition}
\newtheorem{corollary}[theorem]{Corollary}
\theoremstyle{definition}
\newtheorem{definition}[theorem]{Definition}
\newtheorem{assumption}[theorem]{Assumption}
\newtheorem{problem}[theorem]{Problem}
\theoremstyle{remark}
\newtheorem{remark}[theorem]{Remark}

\newcommand{\E}{\mathbb{E}}
\newcommand{\Prob}{\mathbb{P}}

\title{Single-Chord Augmentation of Weighted Cycles for Algebraic Connectivity and Network Coherence}
\author[1]{Jiarong Deng}
\author[,2]{Liu Chang\thanks{Corresponding author: Liu Chang (liuchang\_@nudt.edu.cn)}}
\author[3,4]{Quanshun Yang}
\affil[1]{School of Mathematics and Statistics, Jishou University,\linebreak Jishou 416000, China}
\affil[2]{College of Sciences, National University of Defense Technology,\linebreak Changsha 410073, China}
\affil[3]{National Key Laboratory of Electromagnetic Energy, \linebreak Naval University of Engineering, Wuhan 430033, China}
\affil[4]{East Lake Laboratory, Wuhan 430033, China}

\date{\today}

\begin{document}

\maketitle

\begin{abstract}
	Ring-like communication graphs appear in UAV formations, cyclic patrols, perimeter monitoring, and other multi-agent tasks in which agents exchange information mainly with neighboring vehicles along a closed route. When measurement and actuation noise are persistent, a useful augmentation should improve both the convergence rate of consensus and the steady-state disagreement level. This paper studies the addition of a single weighted chord to a connected weighted cycle. The central observation is that a chord is not just a generic rank-one edge update: it splits the cycle into two complementary resistance arcs, and this resistance split governs both the algebraic-connectivity gain and the Kirchhoff-index reduction. We first derive exact chord-induced effective-resistance and Kirchhoff-index update formulas, giving a closed-form coherence objective. We then prove that, under bounded conductances and small resistance discrepancy, near-antipodal resistance-balanced chords are near-optimal for algebraic-connectivity improvement; an i.i.d. bounded-conductance model yields the same conclusion with high probability. Finally, because the best convergence-rate chord and the best coherence chord need not coincide, we formulate the design as a finite Pareto problem and introduce RBAPS and AW-RBAPS, two resistance-balanced screening rules that retain only linear or near-linear candidate sets. Numerical experiments show that AW-RBAPS remains effective beyond the formal moderate-heterogeneity regime and approximates the exhaustive Pareto front with mean hypervolume ratio $0.9987$ while evaluating about $10.1\%$ of admissible chords.
\end{abstract}

\textbf{Keywords}: Multi-agent systems; UAV communication networks; weighted cycles; algebraic connectivity; network coherence; Kirchhoff index; Pareto screening.

\section{Introduction}
\label{sec:introduction}
Consensus is a basic coordination layer in multi-agent systems, including sensor networks, robotic teams, UAV formations, and distributed monitoring systems~\cite{BulloCortesMartinez2009,OlfatiSaberFaxMurray2007,RenBeard2005}. In many UAV and mobile-sensor missions the communication graph is naturally close to a ring: aircraft may fly around a perimeter, follow a cyclic patrol route, maintain a circular formation around a target, or relay information along a closed sensing chain. Such cycle-like graphs are sparse, robust to a single local communication failure, and easy to maintain under limited onboard energy and bandwidth. At the same time, a pure ring can have slow information mixing and poor disturbance attenuation. A practical way to improve performance is therefore to add a small number of nonlocal communication links. This paper focuses on the most elementary and analytically transparent case: adding one chord to a weighted cycle.

The control performance of this augmentation has two different aspects. For noisy first-order consensus, the algebraic connectivity determines the exponential decay rate of the mean disagreement, whereas the first-order network coherence is proportional to the Kirchhoff index and measures the steady-state disagreement created by persistent noise~\cite{Bamieh2012,Young2010}. Both quantities are standard in networked control and spectral graph theory~\cite{Fiedler1973,Klein1993,GutmanMohar1996}. They are related but not interchangeable: a chord that maximizes convergence-rate improvement need not maximize coherence improvement.

A tempting approach is to treat chord insertion as a generic edge-addition problem. Existing methods based on Fiedler-vector separation, rank-one perturbation, resistance-distance updates, or greedy edge selection provide useful global tools~\cite{Ghosh2006,Kim2010,Sardar2025,Sydney2013,WanEtAl2008}. However, a weighted cycle contains additional geometry that these graph-generic rules do not explicitly use. Every admissible chord divides the ring into two complementary arcs. The total resistances of these two arcs determine the endpoint effective resistance, shape the low-frequency Laplacian modes, and enter the rank-one update of the Kirchhoff index. Thus, in this graph class, the chord location should be evaluated through complementary-arc resistance balance rather than through endpoint distance or a single spectral vector alone.

The paper is organized around the following three-part logic.
\begin{itemize}[leftmargin=1.5em]
	\item \emph{Problem.} We identify the cycle-specific design variable that complements	graph-agnostic edge-addition viewpoints: the complementary-arc resistance split induced by the chord.
	\item \emph{Theory.} We show that the same complementary-arc resistance geometry controls both performance metrics. It yields an exact coherence formula and a near-antipodal resistance-balance theory for algebraic-connectivity gain.
	\item \emph{Algorithm.} We convert the theory into RBAPS and AW-RBAPS, screening rules that evaluate only linear or near-linear candidate sets while preserving the high-quality Pareto trade-off region in experiments.
\end{itemize}
The analysis is deliberately focused on the communication topology and consensus layer rather than on full UAV flight dynamics. This scope makes the results directly applicable to formation, patrol, and sensing architectures in which a cycle-like communication backbone is designed first and then used by higher-level motion controllers.

The main contributions are threefold.
\begin{enumerate}[leftmargin=1.6em]
	\item We derive exact chord-induced effective-resistance and Kirchhoff-index update formulas for weighted cycles. The resulting expression gives an explicit coherence objective in terms of the two complementary arc resistances and the chord conductance.
	\item We develop a near-antipodal resistance-balance theory for algebraic-connectivity gain. A cycle-adapted secular equation and a two-mode comparison theorem show that, under bounded conductances and small resistance discrepancy, a near-antipodal resistance-balanced chord is near-optimal; an i.i.d. bounded-conductance model gives a high-probability version.
	\item We formulate one-chord augmentation as a finite Pareto problem and propose RBAPS/AW-RBAPS screening. The screened rules exploit resistance geometry to retain a near-linear candidate set and empirically recover almost the same Pareto front as exhaustive search.
\end{enumerate}

\textbf{Organization.}
Section~\ref{sec:prelim} defines the noisy consensus model and the one-chord design problem. Section~\ref{sec:resistance} gives the exact coherence update. Section~\ref{sec:algconn} develops the resistance-balanced algebraic-connectivity theory, with technical lemmas and proofs placed in the appendix. Section~\ref{sec:pareto} formulates the finite Pareto front, and Section~\ref{sec:experiments} evaluates RBAPS and AW-RBAPS. Section~\ref{sec:conclusion} concludes the paper.

\section{Preliminaries and Problem Statement}
\label{sec:prelim}

\subsection{Weighted graphs, Laplacian, and algebraic connectivity}

Let $\mathcal{G}^{w}=(V,E,A)$ be a connected undirected weighted graph on
$n$ vertices $V=\{v_{0},\dots,v_{n-1}\}$.
Its symmetric adjacency matrix $A=[a_{ij}]$ satisfies $a_{ij}\ge 0$, with
$a_{ij}>0$ if and only if $\{v_i,v_j\}\in E$.
The weighted degree of vertex $v_i$ is $d_i\coloneqq \sum_{j=0}^{n-1} a_{ij}$ and the associated degree matrix is $D\coloneqq \operatorname{diag}(d_0,\dots,d_{n-1})$.
The weighted Laplacian of $\mathcal{G}^{w}$ is $L\coloneqq D-A$.

Since $\mathcal{G}^{w}$ is connected, $L$ is symmetric positive
semidefinite, with a simple zero eigenvalue associated with the all-ones
vector $\boldsymbol{1}=(1,\dots,1)^\top$.
We order the eigenvalues of $L$ as $0=\lambda_0<\lambda_1\le \lambda_2\le \cdots \le \lambda_{n-1}$,
and denote by $\{(\lambda_k,\boldsymbol{u}_k)\}_{k=0}^{n-1}$ an orthonormal
eigenbasis, with $\boldsymbol{u}_0=\boldsymbol{1}/\sqrt{n}$.
The second smallest eigenvalue $\lambda_1$ is the
\emph{algebraic connectivity}~\cite{Fiedler1973}, and any corresponding
eigenvector $\boldsymbol{u}_1\perp \boldsymbol{1}$ is called a
\emph{Fiedler vector}.
By the Courant--Fischer theorem,
\begin{equation}\label{eq:rr_lambda1}
	\lambda_1=
	\min_{\substack{\boldsymbol{x}\perp \boldsymbol{1}\\ \|\boldsymbol{x}\|_2=1}}
	\boldsymbol{x}^\top L \boldsymbol{x}.
\end{equation}

The disagreement subspace is $\boldsymbol{1}^\perp
\coloneqq
\{\boldsymbol{x}\in\mathbb{R}^n:\boldsymbol{1}^\top \boldsymbol{x}=0\}$.
For each vertex $v_i$, we write $\boldsymbol{e}_i\in\mathbb{R}^n$ for the
$i$th canonical basis vector.

\subsection{Noisy first-order consensus and network coherence}

Following~\cite{Bamieh2012}, consider $n$ agents interacting over the
weighted graph $\mathcal{G}^{w}$.
Let $\xi_i(t)\in\mathbb{R}$ denote the state of agent $i$, and define $\boldsymbol{\xi}(t)\coloneqq
(\xi_0(t),\dots,\xi_{n-1}(t))^\top$. The noisy first-order consensus dynamics are
\begin{equation}\label{eq:consensus_sde}
	\mathrm{d}\boldsymbol{\xi}(t)
	=
	-L\,\boldsymbol{\xi}(t)\,\mathrm{d}t
	+
	\sigma\,\mathrm{d}\mathbf{W}(t),
\end{equation}
where $\sigma>0$ is the noise intensity and
$\mathbf{W}(t)=(W_0(t),\dots,W_{n-1}(t))^\top$ is a standard
$n$-dimensional Brownian motion.

To isolate disagreement dynamics, introduce the orthogonal projector: $P\coloneqq I-\frac{1}{n}\boldsymbol{1}\boldsymbol{1}^\top$ and $\widetilde{\boldsymbol{\xi}}(t)\coloneqq P\boldsymbol{\xi}(t).$
Since $PL=LP=L$, the projected process satisfies
\begin{equation}\label{eq:disag_dyn}
	\mathrm{d}\widetilde{\boldsymbol{\xi}}(t)
	=
	-L\,\widetilde{\boldsymbol{\xi}}(t)\,\mathrm{d}t
	+
	\sigma P\,\mathrm{d}\mathbf{W}(t),
	\quad
	\widetilde{\boldsymbol{\xi}}(0)\in \boldsymbol{1}^\perp.
\end{equation}

The next result summarizes standard facts on the mean decay and steady-state
covariance of the disagreement process~\cite{Bamieh2012}.

\begin{proposition}[Mean decay and steady-state covariance]
	\label{prop:consensus_covariance}
	For the disagreement dynamics \eqref{eq:disag_dyn}, the following hold.
	\begin{enumerate}
		\item[\textnormal{(i)}]
		\[
		\mathbb{E}[\widetilde{\boldsymbol{\xi}}(t)]
		=
		e^{-Lt}\widetilde{\boldsymbol{\xi}}(0),
		\quad
		\|\mathbb{E}[\widetilde{\boldsymbol{\xi}}(t)]\|_2
		\le
		e^{-\lambda_1 t}\|\widetilde{\boldsymbol{\xi}}(0)\|_2.
		\]
		\item[\textnormal{(ii)}]
		The disagreement second-moment matrix $\Sigma(t)\coloneqq
		\mathbb{E}\!\left[
		\widetilde{\boldsymbol{\xi}}(t)
		\widetilde{\boldsymbol{\xi}}(t)^\top
		\right]$
		satisfies $\frac{\mathrm{d}}{\mathrm{d}t}\Sigma(t)
		=
		-L\Sigma(t)-\Sigma(t)L+\sigma^2P$.
		When the initial disagreement mean is zero, $\Sigma(t)$ is the covariance
		matrix. In all cases, its unique steady-state covariance on
		$\boldsymbol{1}^\perp$ is
		\begin{equation}\label{eq:sigma_infty}
			\Sigma_\infty
			=
			\frac{\sigma^2}{2}L^\dagger,
			\qquad
			L^\dagger
			=
			\sum_{k=1}^{n-1}
			\frac{1}{\lambda_k}\boldsymbol{u}_k\boldsymbol{u}_k^\top,
		\end{equation}
		where $L^\dagger$ is the Moore--Penrose pseudoinverse of $L$.
	\end{enumerate}
\end{proposition}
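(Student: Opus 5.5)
We work throughout in the orthonormal eigenbasis $\{\boldsymbol{u}_k\}$ of $L$ and use that $P=\sum_{k\ge 1}\boldsymbol{u}_k\boldsymbol{u}_k^\top$ is the orthogonal projector onto $\boldsymbol{1}^\perp$, which commutes with $L$.

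\textbf{Part (i).} Take expectations in the integral form of \eqref{eq:disag_dyn}. The It\^o integral $\int_0^t \sigma P\,\mathrm{d}\mathbf{W}$ has zero mean, so $\boldsymbol{m}(t)\coloneqq\mathbb{E}[\widetilde{\boldsymbol{\xi}}(t)]$ solves $\dot{\boldsymbol{m}}=-L\boldsymbol{m}$. Hence $\boldsymbol{m}(t)=e^{-Lt}\widetilde{\boldsymbol{\xi}}(0)$. Expand $\widetilde{\boldsymbol{\xi}}(0)=\sum_{k\ge1}c_k\boldsymbol{u}_k$; this expansion has no $\boldsymbol{u}_0$ component because $\widetilde{\boldsymbol{\xi}}(0)\perp\boldsymbol{1}$. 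Then
\[
\|\boldsymbol{m}(t)\|_2^2=\sum_{k\ge1}e^{-2\lambda_k t}c_k^2\le e^{-2\lambda_1 t}\|\widetilde{\boldsymbol{\xi}}(0)\|_2^2,
\]
since $\lambda_k\ge\lambda_1$ for all $k\ge1$.

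\textbf{Part (ii), evolution equation.} Apply It\^o's product rule to $\widetilde{\boldsymbol{\xi}}\widetilde{\boldsymbol{\xi}}^\top$:
\[
\mathrm{d}(\widetilde{\boldsymbol{\xi}}\widetilde{\boldsymbol{\xi}}^\top)=(\mathrm{d}\widetilde{\boldsymbol{\xi}})\widetilde{\boldsymbol{\xi}}^\top+\widetilde{\boldsymbol{\xi}}(\mathrm{d}\widetilde{\boldsymbol{\xi}})^\top+\sigma^2 PP^\top\,\mathrm{d}t .
\]
Use $PP^\top=P$ and take expectations; the martingale terms vanish. This gives $\dot\Sigma=-L\Sigma-\Sigma L+\sigma^2P$. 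When $\boldsymbol{m}(0)=0$, part (i) gives $\boldsymbol{m}(t)\equiv0$, so $\Sigma(t)$ coincides with the covariance.

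\textbf{Part (ii), steady state.} Write $\Sigma_{kl}(t)=\boldsymbol{u}_k^\top\Sigma(t)\boldsymbol{u}_l$. The Lyapunov equation decouples entrywise:
\[
\dot\Sigma_{kl}=-(\lambda_k+\lambda_l)\Sigma_{kl}+\sigma^2\,\delta_{kl}\,\mathbf{1}[k\ge1].
\]
\begin{itemize}
\item For $k=0$ or $l=0$: $\widetilde{\boldsymbol{\xi}}(t)\in\boldsymbol{1}^\perp$ for all $t$, since $P$ annihilates $\boldsymbol{1}$ and $L$ preserves $\boldsymbol{1}^\perp$. Hence these entries are identically zero.
\item For $k,l\ge1$: $\lambda_k+\lambda_l>0$ by connectivity, so every such entry converges exponentially. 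The limit is $\sigma^2/(2\lambda_k)$ when $k=l$ and $0$ otherwise.
\end{itemize}
Therefore $\Sigma_\infty=\frac{\sigma^2}{2}\sum_{k\ge1}\lambda_k^{-1}\boldsymbol{u}_k\boldsymbol{u}_k^\top=\frac{\sigma^2}{2}L^\dagger$. This spectral sum is the standard form of the pseudoinverse of a symmetric matrix.

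\textbf{Uniqueness.} Suppose $\Sigma$ is symmetric, supported on $\boldsymbol{1}^\perp$ (that is, $P\Sigma P=\Sigma$), and satisfies $L\Sigma+\Sigma L=\sigma^2P$. Read entrywise in the eigenbasis, this equation forces exactly the entries computed above. Equivalently, the operator $X\mapsto LX+XL$ is invertible on matrices supported on $\boldsymbol{1}^\perp$, because its eigenvalues $\lambda_k+\lambda_l$ are all positive there.

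\textbf{Main obstacle.} The only delicate point is the phrase ``in all cases'' when the initial mean is nonzero. There $\Sigma(t)=\operatorname{Cov}(t)+\boldsymbol{m}\boldsymbol{m}^\top$, and the correction $\boldsymbol{m}\boldsymbol{m}^\top$ decays like $e^{-2\lambda_1 t}$ by part (i). So the limit is still $\Sigma_\infty$. The decoupled ODE argument already covers this case, because it does not assume $\boldsymbol{m}(0)=0$.
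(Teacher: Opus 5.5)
Your argument is correct and complete. The paper does not prove this proposition; it presents it as a summary of standard facts and cites Bamieh et al., so there is no in-paper proof to compare against.

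Your route is It\^o's product rule to obtain the Lyapunov differential equation, followed by decoupling it in the eigenbasis of $L$. It has three advantages:
\begin{itemize}
\item it proves the evolution equation in (ii) exactly as stated;
\item it covers a nonzero initial mean at no extra cost;
\item it gets uniqueness on $\boldsymbol{1}^\perp$ directly from $\lambda_k+\lambda_l>0$ for $k,l\ge1$.
\end{itemize}

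The more common textbook route is variation of constants, $\widetilde{\boldsymbol{\xi}}(t)=e^{-Lt}\widetilde{\boldsymbol{\xi}}(0)+\sigma\int_0^t e^{-L(t-s)}P\,\mathrm{d}\mathbf{W}(s)$, combined with the It\^o isometry. That gives (i) immediately and the closed form $\operatorname{Cov}(t)=\sigma^2\int_0^t e^{-2Ls}P\,\mathrm{d}s=\sigma^2\sum_{k\ge1}\frac{1-e^{-2\lambda_k t}}{2\lambda_k}\boldsymbol{u}_k\boldsymbol{u}_k^\top$. The limit is then explicit, but the Lyapunov equation and the uniqueness claim each need a separate step.

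Two minor remarks on your write-up:
\begin{itemize}
\item You can read off the vanishing of the index-$0$ entries directly from your decoupled system. Indeed $\dot\Sigma_{00}=0$ and $\dot\Sigma_{0l}=-\lambda_l\Sigma_{0l}$, and all these entries vanish at $t=0$ because $\boldsymbol{u}_0^\top\widetilde{\boldsymbol{\xi}}(0)=0$. No separate invariance argument is needed.
\item Discarding the stochastic-integral terms after taking expectations requires them to be true martingales. This holds because $\widetilde{\boldsymbol{\xi}}$ has finite second moments on bounded time intervals.
\end{itemize}
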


Proposition~\ref{prop:consensus_covariance} shows that $\lambda_1$
determines the exponential decay rate of the mean disagreement, whereas
$L^\dagger$ governs the steady-state fluctuations induced by the noise.
This motivates the standard notion of network coherence~\cite{Bamieh2012}.

\begin{definition}[First-order network coherence \cite{Bamieh2012}]
	The \emph{first-order network coherence} is the average steady-state
	variance of the deviation from the instantaneous network average $\bar{\xi}(t)\coloneqq \frac{1}{n}\sum_{i=0}^{n-1}\xi_i(t)$:
	\begin{equation}\label{eq:coherence_def}
		H
		\coloneqq
		\lim_{t\to\infty}
		\frac{1}{n}
		\sum_{i=0}^{n-1}
		\mathbb{E}\bigl[(\xi_i(t)-\bar{\xi}(t))^2\bigr].
	\end{equation}
\end{definition}

Using \eqref{eq:sigma_infty}, one obtains
\begin{equation}\label{eq:coherence_trace}
	H
	=
	\frac{1}{n}\operatorname{tr}(\Sigma_\infty)
	=
	\frac{\sigma^2}{2n}\operatorname{tr}(L^\dagger)
	=
	\frac{\sigma^2}{2n}\sum_{k=1}^{n-1}\frac{1}{\lambda_k}.
\end{equation}
Hence smaller $H$ means that the agent states remain more tightly clustered
around the network average despite the stochastic disturbances.

\subsection{Effective resistance, Kirchhoff index, and coherence}

Two classical graph quantities that naturally enter the analysis are the
effective resistance and the Kirchhoff index~\cite{Klein1993}.

\begin{definition}[Effective resistance and Kirchhoff index]
	\label{def:resistance}
	The \emph{effective resistance} between vertices $v_i$ and $v_j$ is
	\begin{equation}\label{eq:resistance_def}
		R_{ij}
		\coloneqq
		(\boldsymbol{e}_i-\boldsymbol{e}_j)^\top
		L^\dagger
		(\boldsymbol{e}_i-\boldsymbol{e}_j).
	\end{equation}
	The \emph{Kirchhoff index} of $\mathcal{G}^{w}$ is $K_f
	\coloneqq
	\sum_{0\le i<j\le n-1} R_{ij}$.
\end{definition}

A standard spectral identity~\cite{Bapat2010} yields
\begin{equation}\label{eq:Kf_trace}
	K_f
	=
	n\,\operatorname{tr}(L^\dagger)
	=
	n\sum_{k=1}^{n-1}\frac{1}{\lambda_k}.
\end{equation}
Combining \eqref{eq:coherence_trace} and \eqref{eq:Kf_trace}, we obtain
\[
H=\frac{\sigma^2}{2n^2}K_f.
\]
Therefore, minimizing the network coherence is equivalent to minimizing the
Kirchhoff index.

For the noisy consensus model, pairwise steady-state variances are also
linked to effective resistances through
\begin{equation}\label{eq:var_resistance}
	\mathbb{E}\bigl[(\xi_i-\xi_j)^2\bigr]_{\mathrm{ss}}
	=
	\frac{\sigma^2}{2}R_{ij}.
\end{equation}
Accordingly, effective resistance serves as a key intermediate quantity in
our subsequent analysis of coherence and chord augmentation.

\subsection{Problem statement: one-chord augmentation on a weighted cycle}

We now specialize to a weighted cycle
$\mathcal{C}_n$ on vertices $v_0,\dots,v_{n-1}$, with strictly positive edge
conductances $c_{i,i+1}>0$ for $i=0,\dots,n-1$, where indices are taken
modulo $n$.
Its Laplacian is denoted by $L$.

We augment the cycle by adding a single extra weighted edge, referred to as
a \emph{chord}, between two distinct non-adjacent vertices $v_p$ and $v_q$.
If the chord conductance is $w\ge 0$, then the augmented Laplacian is the
rank-one update
\begin{equation}\label{eq:chord_update}
	L_{p,q}(w)
	\coloneqq
	L+w\,\boldsymbol{b}_{pq}\boldsymbol{b}_{pq}^\top,
	\qquad
	\boldsymbol{b}_{pq}\coloneqq \boldsymbol{e}_p-\boldsymbol{e}_q.
\end{equation}
When no confusion arises, we simply write $\boldsymbol{b}$ for
$\boldsymbol{b}_{pq}$.

The design question studied in this paper is to determine how the chord
location, and when allowed the chord conductance, affects two fundamental
but generally distinct performance criteria.

Let
\begin{equation}\label{eq:feasible_chord_set}
	\mathcal{E}_{\mathrm{ch}}
	\coloneqq
	\bigl\{\{p,q\}:0\le p<q\le n-1,\ \{v_p,v_q\}\notin E(\mathcal C_n)\bigr\}
\end{equation}
denote the set of admissible unordered nonadjacent vertex pairs.

For algebraic connectivity, we quantify the benefit of adding the chord
$\{p,q\}\in\mathcal{E}_{\mathrm{ch}}$ by
\begin{equation}\label{eq:delta_pq_def}
	\Delta_{p,q}(w)
	\coloneqq
	\lambda_1\bigl(L_{p,q}(w)\bigr)-\lambda_1(L)\ge 0.
\end{equation}
By eigenvalue interlacing, this gain is bounded above by
\begin{equation}\label{eq:interlacing_ceiling_problem}
	0\le \Delta_{p,q}(w)\le
	\gamma,
	\qquad
	\gamma\coloneqq \lambda_2(L)-\lambda_1(L),
\end{equation}
for every admissible pair $\{p,q\}\in\mathcal{E}_{\mathrm{ch}}$ and every $w\ge0$.

For network coherence, equivalently for the Kirchhoff index, we define the
improvement
\begin{equation}\label{eq:I_pq_def}
	\mathcal{I}_{p,q}(w)
	\coloneqq
	K_f(L)-K_f\bigl(L_{p,q}(w)\bigr)\ge 0.
\end{equation}
Since $H=\sigma^2K_f/(2n^2)$, maximizing $\mathcal I_{p,q}(w)$ is
equivalent to maximizing the reduction in first-order network coherence.

\begin{problem}\emph{(One-chord augmentation on a weighted cycle).}
	Given a connected weighted cycle $\mathcal C_n$ and a chord conductance
	budget, characterize and compare the admissible chords
	$\{p,q\}\in\mathcal{E}_{\mathrm{ch}}$ that
	\begin{enumerate}
		\item maximize the algebraic-connectivity gain
		$\Delta_{p,q}(w)$, and
		\item maximize the Kirchhoff-index reduction
		$\mathcal I_{p,q}(w)$, equivalently minimize the resulting
		first-order network coherence.
	\end{enumerate}
\end{problem}

The two objectives are related but generally distinct. The remainder of the paper therefore separates the closed-form coherence calculation from the spectral convergence-rate analysis, and then recombines them in a Pareto formulation. This separation is important for applications: a UAV communication chord that is best for fast transient agreement may differ from the chord that best attenuates persistent sensing or actuation noise.

\section{Effective Resistance and Kirchhoff Index}\label{sec:resistance}

\subsection{Notation for the weighted cycle}

Set
\begin{equation*}
	r_s:=\frac1{c_{s,s+1}},\qquad s=0,\dots,n-1,
	\qquad
	T:=\sum_{s=0}^{n-1}r_s.
\end{equation*}
For $a,b\in\{0,\dots,n-1\}$, define the directed arc resistance
\begin{equation}\label{eq:directed_arc_resistance}
	d(a,b):=
	\begin{cases}
		\displaystyle\sum_{s=a}^{b-1}r_s, & a<b,\\[0.7em]
		0, & a=b,\\[0.7em]
		\displaystyle\sum_{s=a}^{n-1}r_s+\sum_{s=0}^{b-1}r_s, & a>b.
	\end{cases}
\end{equation}
Then $d(a,b)+d(b,a)=T$.
For $0\le p<q\le n-1$, write
\begin{equation}\label{eq:Apq_Bpq_def}
	A_{pq}:=d(p,q),
	\qquad
	B_{pq}:=d(q,p),
	\qquad
	A_{pq}+B_{pq}=T.
\end{equation}

\subsection{Exact update formulas}

\begin{lemma}[Resistance on the weighted cycle]\label{lem:Rab_cycle}
	For any two vertices $v_a,v_b$,
	\begin{equation}\label{eq:Rab_cycle}
		R_{ab}=\frac{d(a,b)d(b,a)}{T}.
	\end{equation}
	In particular, $R_{pq}=A_{pq}B_{pq}/T$.
\end{lemma}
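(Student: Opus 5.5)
We prove \eqref{eq:Rab_cycle} by constructing an explicit unit current flow (equivalently, a solution $\boldsymbol{x}$ of $L\boldsymbol{x}=\boldsymbol{e}_a-\boldsymbol{e}_b$) and reading off the potential drop. Since $\boldsymbol{b}_{ab}=\boldsymbol{e}_a-\boldsymbol{e}_b\perp\boldsymbol{1}$ and $LL^\dagger$ is the orthogonal projector onto $\boldsymbol{1}^\perp$, any $\boldsymbol{x}$ with $L\boldsymbol{x}=\boldsymbol{b}_{ab}$ differs from $L^\dagger\boldsymbol{b}_{ab}$ by a multiple of $\boldsymbol{1}$. Therefore
\[
R_{ab}=\boldsymbol{b}_{ab}^\top L^\dagger\boldsymbol{b}_{ab}=\boldsymbol{b}_{ab}^\top\boldsymbol{x}=x_a-x_b .
\]
The case $a=b$ is trivial, since both sides vanish, so we assume $a\neq b$.

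The two vertices split the cycle into two paths. The first is the directed arc $a\to a+1\to\cdots\to b$, with total resistance $d(a,b)$. The second is the arc from $b$ back to $a$, with total resistance $d(b,a)=T-d(a,b)$. We route a current $I_1=d(b,a)/T$ along the first arc from $a$ toward $b$, and a current $I_2=d(a,b)/T$ along the second arc, also from $a$ toward $b$ (traversing it in reverse orientation). Then $I_1+I_2=1$.

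We define potentials by setting $x_a:=0$ and decreasing the potential along each arc by $I\cdot r_s$ across edge $s$. Along the first arc this gives $x_b=-I_1 d(a,b)$. Along the second arc, going backward from $a$, it gives $x_b=-I_2 d(b,a)$. These two values agree because $I_1d(a,b)=I_2d(b,a)=d(a,b)d(b,a)/T$, so the potential is well defined on every vertex.

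It remains to check $L\boldsymbol{x}=\boldsymbol{b}_{ab}$ componentwise. The identity $(L\boldsymbol{x})_i=\sum_j c_{ij}(x_i-x_j)$ says that $(L\boldsymbol{x})_i$ is the net current leaving vertex $i$. At each interior vertex of either arc, the same current enters and leaves, giving $0$. At $a$ the outflow is $I_1+I_2=1$, and at $b$ the inflow is $1$. Hence $x_a-x_b=d(a,b)d(b,a)/T$, which is \eqref{eq:Rab_cycle}. Specializing to $a=p$, $b=q$ and using \eqref{eq:Apq_Bpq_def} gives $R_{pq}=A_{pq}B_{pq}/T$.

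The only delicate point is the bookkeeping of orientations modulo $n$ for the arc wrapping past index $n-1$, that is, verifying consistency at $b$ using $d(a,b)+d(b,a)=T$. Equivalently, one can invoke the series--parallel laws for resistors: the two arcs are series combinations of resistance $d(a,b)$ and $d(b,a)$, placed in parallel, giving $\bigl(1/d(a,b)+1/d(b,a)\bigr)^{-1}$. The flow construction above is exactly the justification of that rule from the definition \eqref{eq:resistance_def}.
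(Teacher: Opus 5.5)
Your argument is correct and is essentially the paper's proof. The paper simply places the two arcs, of resistances $d(a,b)$ and $d(b,a)$, in parallel; your explicit potential $\boldsymbol{x}$ with $L\boldsymbol{x}=\boldsymbol{e}_a-\boldsymbol{e}_b$ verifies, directly from the definition $R_{ab}=(\boldsymbol{e}_a-\boldsymbol{e}_b)^\top L^\dagger(\boldsymbol{e}_a-\boldsymbol{e}_b)$, that this parallel rule applies, and your separate handling of $a=b$ (where $d(a,b)+d(b,a)=T$ fails) is a small tightening rather than a different route.
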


\begin{lemma}\label{lem:res_kir_update}
	\emph{(Exact update of effective resistance and Kirchhoff index).}
	Let $L_{p,q}(w)=L+w\boldsymbol{b}\boldsymbol{b}^\top$ with $\boldsymbol{b}=\boldsymbol{e}_p-\boldsymbol{e}_q$ and $w\ge0$.
	Then:
	\begin{enumerate}
		\item[\textnormal{(i)}]
		\begin{equation}\label{eq:Rpq_update_cycle}
			R_{pq}(w)=\frac{R_{pq}}{1+wR_{pq}}
			=\frac{A_{pq}B_{pq}}{T+wA_{pq}B_{pq}}.
		\end{equation}
		\item[\textnormal{(ii)}] For any unordered pair $\{u,v\}$,
		\begin{equation}\label{eq:Ruv_update_cycle_compact}
			R_{uv}(w)=R_{uv}-\frac{w}{4(1+wR_{pq})}
			\bigl(R_{uq}+R_{vp}-R_{up}-R_{vq}\bigr)^2.
		\end{equation}
		\item[\textnormal{(iii)}]
		\begin{equation}\label{eq:Kf_cycle_sum}
			K_f(L)=\frac1T\sum_{0\le u<v\le n-1}d(u,v)(T-d(u,v)),
		\end{equation}
		and
		\begin{equation}\label{eq:Kf_update_cycle_compact}
			K_f(L_{p,q}(w))
			=K_f(L)-\mathcal I_{p,q}(w),
		\end{equation}
		where the exact improvement is
		\begin{equation}\label{eq:Kf_improvement_full}
			\mathcal I_{p,q}(w):=
			\frac{w\sum_{0\le u<v\le n-1}
				\bigl(R_{uq}+R_{vp}-R_{up}-R_{vq}\bigr)^2}{4(1+wR_{pq})}.
		\end{equation}
	\end{enumerate}
\end{lemma}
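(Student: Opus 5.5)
The approach is to handle the whole lemma through the Sherman--Morrison formula for pseudoinverses, with Lemma~\ref{lem:Rab_cycle} supplying the cycle-specific values. Since $\boldsymbol{b}=\boldsymbol{e}_p-\boldsymbol{e}_q\perp\boldsymbol{1}$ lies in the range of $L$, and $L_{p,q}(w)$ has the same kernel $\operatorname{span}\{\boldsymbol{1}\}$, the rank-one update formula holds verbatim on $\boldsymbol{1}^\perp$:
\[
L_{p,q}(w)^\dagger = L^\dagger - \frac{w\,L^\dagger\boldsymbol{b}\boldsymbol{b}^\top L^\dagger}{1+w\,\boldsymbol{b}^\top L^\dagger\boldsymbol{b}},\qquad \boldsymbol{b}^\top L^\dagger\boldsymbol{b}=R_{pq}.
\]
I will verify this by multiplying by $L+w\boldsymbol{b}\boldsymbol{b}^\top$ and checking that the product is the projector $P$. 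The denominator is positive for every $w\ge0$.

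Part (i) follows by sandwiching the update with $\boldsymbol{b}$. This gives $R_{pq}(w)=R_{pq}-wR_{pq}^2/(1+wR_{pq})=R_{pq}/(1+wR_{pq})$. Substituting $R_{pq}=A_{pq}B_{pq}/T$ from Lemma~\ref{lem:Rab_cycle} and clearing $T$ yields the second expression.

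For (ii), sandwich with $\boldsymbol{b}_{uv}=\boldsymbol{e}_u-\boldsymbol{e}_v$ to get $R_{uv}(w)=R_{uv}-w(\boldsymbol{b}_{uv}^\top L^\dagger\boldsymbol{b})^2/(1+wR_{pq})$. The key step is to express the bilinear form through resistances, using the polarization-type identity
\[
(\boldsymbol{e}_u-\boldsymbol{e}_v)^\top L^\dagger(\boldsymbol{e}_p-\boldsymbol{e}_q)=\tfrac12\bigl(R_{uq}+R_{vp}-R_{up}-R_{vq}\bigr).
\]
To prove it, I will write $R_{xy}=\ell_{xx}+\ell_{yy}-2\ell_{xy}$ with $\ell=L^\dagger$, expand the right side, and observe that the diagonal terms cancel. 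The left side is $\ell_{up}-\ell_{uq}-\ell_{vp}+\ell_{vq}$. Squaring produces the factor $1/4$. This sign bookkeeping is the only delicate point, so I will check it on the case $\{u,v\}=\{p,q\}$, where the bracket should equal $2R_{pq}$ and the formula should reproduce (i).

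For (iii), the formula for $K_f(L)$ follows by summing Lemma~\ref{lem:Rab_cycle} over $u<v$, using $d(v,u)=T-d(u,v)$. The update \eqref{eq:Kf_update_cycle_compact} with the improvement \eqref{eq:Kf_improvement_full} then follows by summing (ii) over all unordered pairs, since $K_f$ is by definition the sum of pairwise resistances. The common denominator $4(1+wR_{pq})$ factors out of the sum. Nonnegativity of $\mathcal I_{p,q}(w)$ is evident because it is a sum of squares multiplied by $w\ge0$. As a cross-check I could also verify the result via $K_f=n\operatorname{tr}(L^\dagger)$, using $\operatorname{tr}(L^\dagger\boldsymbol{b}\boldsymbol{b}^\top L^\dagger)=\|L^\dagger\boldsymbol{b}\|^2$, but the pairwise summation suffices.

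I expect no real obstacle. The main care point is justifying Sherman--Morrison for the pseudoinverse, which rests on $\boldsymbol{b}\in\operatorname{range}(L)$ and the preserved kernel.
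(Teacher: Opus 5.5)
Your proposal is correct and follows the paper's proof essentially step for step. Both use the pseudoinverse Sherman--Morrison update, sandwich it with $\boldsymbol{b}$ for (i) and with $\boldsymbol{e}_u-\boldsymbol{e}_v$ plus the polarization identity for (ii), and sum (ii) over unordered pairs together with Lemma~\ref{lem:Rab_cycle} for (iii); your explicit verification that the updated matrix multiplies to the projector $P$, and your sign check at $\{u,v\}=\{p,q\}$, are details the paper leaves implicit.
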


\subsection{Monotonicity and design implications}

\begin{theorem}\label{thm:R_Kf_quant}
	\emph{(Quantitative dependence of $R_{pq}$ and $K_f$ on $(p,q,w)$).}
	For every fixed pair $(p,q)$ and every $w\ge0$:
	\begin{enumerate}
		\item[\textnormal{(i)}] $\frac{\mathrm{d}}{\mathrm{d}w}R_{pq}(w)=-\frac{R_{pq}^2}{(1+wR_{pq})^2}<0$.
		Thus $R_{pq}(w)$ is strictly decreasing and convex in $w$.
		\item[\textnormal{(ii)}] The endpoint reduction $\Delta R_{pq}(w):=R_{pq}-R_{pq}(w)=\frac{wR_{pq}^2}{1+wR_{pq}}$ is strictly increasing in $w$.
		\item[\textnormal{(iii)}] $K_f(L_{p,q}(w))$ is strictly decreasing in $w$, and the improvement $\mathcal I_{p,q}(w)$ is increasing and concave in $w$.
	\end{enumerate}
\end{theorem}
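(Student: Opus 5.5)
The plan is to derive all three parts by direct differentiation of the closed forms in Lemma~\ref{lem:res_kir_update}, treating $R_{pq}>0$ and the resistances $R_{uv}$ of the original cycle as constants independent of $w$. Positivity of $R_{pq}$ comes from Lemma~\ref{lem:Rab_cycle}: for $p\neq q$ both $A_{pq}$ and $B_{pq}$ are strictly positive sums of resistances $r_s>0$.

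\textbf{Parts (i) and (ii).} I will write \eqref{eq:Rpq_update_cycle} as $R_{pq}(w)=R_{pq}(1+wR_{pq})^{-1}$. Differentiating gives
\[
R_{pq}'(w)=-\frac{R_{pq}^2}{(1+wR_{pq})^2}<0 .
\]
Differentiating once more gives $2R_{pq}^3/(1+wR_{pq})^3>0$, which yields strict decrease and convexity. For (ii), $\Delta R_{pq}(w)=R_{pq}-R_{pq}(w)$ simplifies to $wR_{pq}^2/(1+wR_{pq})$. Its derivative is the negative of $R_{pq}'(w)$, hence strictly positive.

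\textbf{Part (iii).} Set
\[
S:=\sum_{u<v}\bigl(R_{uq}+R_{vp}-R_{up}-R_{vq}\bigr)^2\ge 0 ,
\]
which is a constant in $w$. By \eqref{eq:Kf_improvement_full}, $\mathcal I_{p,q}(w)=\frac{S}{4}\cdot\frac{w}{1+wR_{pq}}$. The function $g(w)=w/(1+wR_{pq})$ satisfies
\[
g'(w)=(1+wR_{pq})^{-2}>0,\qquad g''(w)=-2R_{pq}(1+wR_{pq})^{-3}<0 .
\]
So $\mathcal I_{p,q}$ is increasing and concave, and by \eqref{eq:Kf_update_cycle_compact} $K_f(L_{p,q}(w))$ is nonincreasing.

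\textbf{Main obstacle: strict decrease needs $S>0$.} Strict decrease of $K_f$ requires that at least one summand in $S$ is nonzero. I will take $\{u,v\}=\{p,q\}$. The term becomes
\[
R_{pq}+R_{qp}-R_{pp}-R_{qq}=2R_{pq}>0,
\]
so $S\ge 4R_{pq}^2>0$.

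As an independent check of strictness, I could instead use $\frac{\mathrm{d}}{\mathrm{d}w}\operatorname{tr}(L_{p,q}(w)^\dagger)=-\|L_{p,q}(w)^\dagger \boldsymbol b\|^2$. This is strictly negative because $\boldsymbol b\perp\boldsymbol 1$ and $\boldsymbol b\neq 0$, so $L_{p,q}(w)^\dagger\boldsymbol b\neq 0$. Combined with \eqref{eq:Kf_trace}, this confirms the claim.
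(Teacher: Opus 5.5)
Your proposal is correct and matches the paper's proof: you differentiate the closed forms, use that $w/(1+wR_{pq})$ is increasing and concave, and get strict positivity of the square-sum from the $\{u,v\}=\{p,q\}$ term, which equals $2R_{pq}$. Your trace-derivative check $\frac{\mathrm d}{\mathrm dw}\operatorname{tr}(L_{p,q}(w)^\dagger)=-\|L_{p,q}(w)^\dagger\boldsymbol b\|_2^2$ is a valid extra confirmation; one small point is that the letter $S$ is already used in the paper for the total cycle resistance, so a different name for your square-sum would avoid a clash.
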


\begin{corollary}\label{cor:selection_RK}
	\emph{(Optimal chord for resistance and coherence).}
	Fix a conductance budget $0\le w\le \hat{w}$.
	\begin{enumerate}
		\item[\textnormal{(i)}]  The endpoint resistance reduction $\Delta R_{pq}(w)=R_{pq}-R_{pq}(w)$ is maximized by taking $w=\hat w$ and choosing a chord that maximizes $R_{pq}=A_{pq}B_{pq}/T$.
		
		\item[\textnormal{(ii)}] The Kirchhoff index, and hence the first-order network coherence, is minimized by taking $w=\hat{w}$ and choosing $(p,q)$ that maximizes the full improvement
		\begin{equation*}
			\mathcal {I}_{p,q}(\hat{w})
			=
			\frac{\hat{w}\sum_{0\le u<v\le n-1}\bigl(R_{uq}+R_{vp}-R_{up}-R_{vq}\bigr)^2}{4(1+\hat{w} R_{pq})}.
		\end{equation*}
		In general, it is not sufficient to maximize only the square-sum term, because the denominator $1+\hat{w} R_{pq}$ also depends on the candidate chord.
	\end{enumerate}
\end{corollary}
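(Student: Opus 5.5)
The corollary is a direct combination of the monotonicity statements in Theorem~\ref{thm:R_Kf_quant} with the closed forms in Lemmas~\ref{lem:Rab_cycle} and~\ref{lem:res_kir_update}. In each part, the plan is to optimise in two stages: first over $w\in[0,\hat w]$ with the pair $\{p,q\}$ held fixed, and then over the finite set $\mathcal E_{\mathrm{ch}}$.

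For part (i), fix $\{p,q\}$. Theorem~\ref{thm:R_Kf_quant}(ii) says that $\Delta R_{pq}(w)=wR_{pq}^2/(1+wR_{pq})$ is strictly increasing in $w$, so its maximum over $[0,\hat w]$ is attained at $w=\hat w$. It remains to compare pairs at this fixed $\hat w$. I will note that
\[
\frac{\partial}{\partial x}\,\frac{\hat w x^2}{1+\hat w x}=\frac{\hat w x(2+\hat w x)}{(1+\hat w x)^2}>0 \quad (x>0,\ \hat w>0),
\]
so $x\mapsto \hat w x^2/(1+\hat w x)$ is strictly increasing on $x>0$. Hence ranking chords by $\Delta R_{pq}(\hat w)$ is the same as ranking them by $R_{pq}$, and Lemma~\ref{lem:Rab_cycle} gives $R_{pq}=A_{pq}B_{pq}/T$. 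Taking $\max_{\{p,q\}}\max_{w\le \hat w}=\max_{\{p,q\}}\Delta R_{pq}(\hat w)$ over the finite set $\mathcal E_{\mathrm{ch}}$ then finishes (i). If $\hat w=0$, every chord gives zero reduction and the claim holds trivially.

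For part (ii), Lemma~\ref{lem:res_kir_update}(iii) gives $K_f(L_{p,q}(w))=K_f(L)-\mathcal I_{p,q}(w)$, where $K_f(L)$ does not depend on the chord. Minimising $K_f$ is therefore the same as maximising $\mathcal I_{p,q}(w)$. Theorem~\ref{thm:R_Kf_quant}(iii) says $\mathcal I_{p,q}$ is nondecreasing in $w$ for each fixed pair, so $w=\hat w$ is optimal for every pair, and the problem reduces to maximising $\mathcal I_{p,q}(\hat w)$ over the finite set $\mathcal E_{\mathrm{ch}}$. Substituting \eqref{eq:Kf_improvement_full} gives the displayed formula. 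Since $H=\sigma^2K_f/(2n^2)$, the same conclusion holds for coherence.

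The only step needing real content is the final remark, that maximising the numerator alone is insufficient. The numerator $S_{pq}=\sum_{u<v}(R_{uq}+R_{vp}-R_{up}-R_{vq})^2$ and the denominator $1+\hat wR_{pq}$ both depend on $\{p,q\}$. The argmax of $S_{pq}$ therefore coincides with the argmax of $S_{pq}/(1+\hat wR_{pq})$ only when the two are suitably ordered. To justify the remark, I would first point out that a larger $R_{pq}$ tends to increase $S_{pq}$ but also increases the penalising denominator. Ideally I would then give a concrete small example with two candidate chords $\{p,q\}$, $\{p',q'\}$ such that $S_{pq}>S_{p'q'}$ but
\[
\frac{S_{pq}}{1+\hat wR_{pq}}<\frac{S_{p'q'}}{1+\hat wR_{p'q'}}.
\]
Such an example should be achievable for large $\hat w$ using a strongly heterogeneous weighted cycle. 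Constructing and verifying it is the main obstacle, but the remark is phrased as "in general", so noting the chord dependence of the denominator may suffice.
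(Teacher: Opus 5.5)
Your proposal is correct and follows the paper's proof essentially step for step: you use monotonicity in $w$ from Theorem~\ref{thm:R_Kf_quant} to saturate the budget, then the same derivative $\hat w x(2+\hat w x)/(1+\hat w x)^2>0$ to reduce (i) to maximizing $R_{pq}=A_{pq}B_{pq}/T$, and the exact Kirchhoff update of Lemma~\ref{lem:res_kir_update} for (ii). Like you, the paper supports the closing ``not sufficient in general'' remark only by noting that the denominator $1+\hat w R_{pq}$ depends on the chord, and it gives no explicit counterexample, so that construction is not required.
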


\begin{remark}\label{remark:budget}\emph{(Budget saturation and native-scale chord weight).}
	For a fixed admissible chord $e=(p,q)$, increasing $w$ cannot
	decrease the algebraic connectivity because
	$L+w \boldsymbol{b}_e \boldsymbol{b}_e^\top$ is monotone in the Loewner order. It also strictly	increases the Kirchhoff-index improvement by Theorem \ref{thm:R_Kf_quant}.
	Thus, under a budget $0\le w\le \hat w$, we always use the full
	budget $w=\hat w$. For the near-optimality theory, we assume that this full-budget chord is on the native resistance scale:
	$\frac{1}{\hat w}\le C_w\bar r$, $\bar r:=\frac{1}{n}\sum_{i=0}^{n-1}r_i .$
	That is, the added chord resistance is at most a constant multiple of
	the average resistance of a native cycle edge. This lower-scale
	condition excludes asymptotically negligible chords and supplies the
	fixed lower bound on $w$ used in the spectral analysis.
\end{remark}

\begin{remark}\emph{(Uniform cycle with one chord).}
	For a uniform cycle where every edge has conductance $c>0$, 
	the Kirchhoff index improvement due to a chord of conductance $w$
	connecting vertices at graph distance $d$ (with $1\le d\le \lfloor n/2\rfloor$)
	is monotone in both $d$ and $w$.
	Consequently, under a conductance budget $0\le w\le \hat{w}$,
	the chord that minimizes the Kirchhoff index
	(and thus network coherence) is the antipodal one 
	($d=\lfloor n/2\rfloor$) with the maximal allowed weight $w=\hat{w}$.
	The same choice also gives the largest reduction of the endpoint
	effective resistance between the two vertices.
\end{remark}

\section{Algebraic Connectivity}\label{sec:algconn}

This section studies how adding a single chord changes the algebraic
connectivity of a weighted cycle. We first collect general facts for the
rank-one Laplacian update
\[
L_{p,q}(w)=L+w\boldsymbol{b}_{pq}\boldsymbol{b}_{pq}^{\top},
\qquad
\boldsymbol{b}_{pq}:=\boldsymbol{e}_p-\boldsymbol{e}_q,
\]
then specialize them to weighted cycles. The argument has three steps:
i) abstract rank-one perturbation analysis; ii) two-terminal reduction of
the cycle; iii) verification of the abstract hypotheses under deterministic
discrepancy control. Throughout this section, $(p,q)$ denotes a pair of
distinct vertices, and $w\ge 0$.

\subsection{Rank-one perturbation facts}

We begin with standard spectral facts for the update
$L+w\boldsymbol{b}\boldsymbol{b}^{\top}$. These results hold for any
connected weighted graph. Recall that $L$ has orthonormal eigenpairs
$(\lambda_k,\boldsymbol{u}_k)$, $k=0,\dots,n-1$, with
$\boldsymbol{u}_0=\boldsymbol{1}/\sqrt{n}$.

\begin{lemma}
	\label{lem:lambda_update}
	\emph{(Secular equation and interlacing \cite{Cve-Doob-Sachs,HornJohnson2012,So1999}).}
	Let $\boldsymbol{b}=\boldsymbol{e}_p-\boldsymbol{e}_q$ and define
	\begin{equation*}
		\beta_k(p,q)\coloneqq \boldsymbol{u}_k^\top\boldsymbol{b}
		= u_{k,p}-u_{k,q},
		\qquad k=1,\dots,n-1.
	\end{equation*}
	An eigenvalue $\mu$ of $L+w\boldsymbol{b}\boldsymbol{b}^\top$ either
	coincides with an eigenvalue of $L$ having an eigenvector orthogonal to
	$\boldsymbol{b}$, or satisfies
	\begin{equation}\label{eq:secular_resolvent}
		1+w\boldsymbol{b}^\top(L-\mu I)^{-1}\boldsymbol{b}=0.
	\end{equation}
	Equivalently, away from the poles of the resolvent,
	\begin{equation}\label{eq:secular_spectral}
		1+w\sum_{k=1}^{n-1}\frac{\beta_k(p,q)^2}{\lambda_k-\mu}=0.
	\end{equation}
	Moreover, the eigenvalues interlace; in particular,
	\begin{equation}\label{eq:interlacing_inequalities}
		0=\lambda_0(L+w\boldsymbol{b}\boldsymbol{b}^\top)<\lambda_1(L)
		\le \lambda_1(L+w\boldsymbol{b}\boldsymbol{b}^\top)
		\le \lambda_2(L).
	\end{equation}
\end{lemma}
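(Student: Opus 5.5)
The plan is to derive the secular equation from a determinant (matrix determinant lemma) argument, restricted to the disagreement subspace $\boldsymbol{1}^\perp$ where all the relevant action happens. Interlacing will then come from a Courant--Fischer argument exploiting that the update is rank one and positive semidefinite.

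\emph{Step 1: the secular equation.} Let $\mu$ be an eigenvalue of $L+w\boldsymbol{b}\boldsymbol{b}^\top$ with eigenvector $\boldsymbol{x}$. Since $\boldsymbol{1}^\top\boldsymbol{b}=0$, the vector $\boldsymbol{1}$ remains an eigenvector with eigenvalue $0$. So I will work with $\boldsymbol{x}\perp\boldsymbol{1}$ and $\mu>0$.

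Rewrite the eigen-equation as $(L-\mu I)\boldsymbol{x}=-w(\boldsymbol{b}^\top\boldsymbol{x})\boldsymbol{b}$. There are two cases.
\begin{itemize}
	\item If $\boldsymbol{b}^\top\boldsymbol{x}=0$, then $L\boldsymbol{x}=\mu\boldsymbol{x}$, so $\mu$ is an eigenvalue of $L$ with an eigenvector orthogonal to $\boldsymbol{b}$.
	\item If $\boldsymbol{b}^\top\boldsymbol{x}\neq0$ and $\mu$ is not an eigenvalue of $L$, then $\boldsymbol{x}=-w(\boldsymbol{b}^\top\boldsymbol{x})(L-\mu I)^{-1}\boldsymbol{b}$. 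Taking the inner product with $\boldsymbol{b}$ and dividing by $\boldsymbol{b}^\top\boldsymbol{x}$ gives \eqref{eq:secular_resolvent}.
\end{itemize}
The remaining subcase is $\boldsymbol{b}^\top\boldsymbol{x}\ne0$ with $\mu=\lambda_k$. Projecting the equation onto the $\lambda_k$-eigenspace gives $0=-w(\boldsymbol{b}^\top\boldsymbol{x})P_k\boldsymbol{b}$, so $P_k\boldsymbol{b}=0$. Hence every $\lambda_k$-eigenvector of $L$ is orthogonal to $\boldsymbol{b}$, which places us back in the first alternative.

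To obtain \eqref{eq:secular_spectral}, expand the resolvent in the orthonormal eigenbasis. Because $\boldsymbol{u}_0^\top\boldsymbol{b}=0$, the $k=0$ term drops out, leaving $\boldsymbol{b}^\top(L-\mu I)^{-1}\boldsymbol{b}=\sum_{k\ge1}\beta_k^2/(\lambda_k-\mu)$.

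\emph{Step 2: interlacing.} Write $M=L+w\boldsymbol{b}\boldsymbol{b}^\top$.
\begin{itemize}
	\item \emph{The zero eigenvalue.} $M$ is the Laplacian of a graph containing the connected cycle, so it is connected. Hence $\lambda_0(M)=0$ is simple, while $\lambda_1(L)>0$.
	\item \emph{Lower bound.} Since $M\succeq L$ in the Loewner order, Courant--Fischer \eqref{eq:rr_lambda1} over $\boldsymbol{1}^\perp$ gives $\lambda_1(M)\ge\lambda_1(L)$.
	\item \emph{Upper bound.} Take the two-dimensional space $S=\operatorname{span}\{\boldsymbol{u}_1,\boldsymbol{u}_2\}\subset\boldsymbol{1}^\perp$. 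It contains a nonzero $\boldsymbol{x}$ with $\boldsymbol{b}^\top\boldsymbol{x}=0$. For this $\boldsymbol{x}$,
	\[
	\boldsymbol{x}^\top M\boldsymbol{x}=\boldsymbol{x}^\top L\boldsymbol{x}\le\lambda_2(L)\|\boldsymbol{x}\|^2 .
	\]
	By \eqref{eq:rr_lambda1}, $\lambda_1(M)\le\lambda_2(L)$.
\end{itemize}

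The main, though mild, obstacle is the bookkeeping in the degenerate case where $\mu$ coincides with an eigenvalue $\lambda_k$ while $\boldsymbol{b}^\top\boldsymbol{x}\neq0$. The projection argument in Step 1 handles it. Everything else is standard.
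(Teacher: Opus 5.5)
Your argument is correct. The paper gives no proof of this lemma; it simply cites the rank-one perturbation literature. Your proposal is therefore a self-contained substitute rather than a variant of an argument in the text.

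\textbf{On the route.} Step 1 is not actually the matrix determinant lemma announced in your plan; it is a direct eigenvector case analysis. That is the better choice. The determinant identity by itself only covers $\mu\notin\operatorname{spec}(L)$, whereas your projection onto the $\lambda_k$-eigenspace also settles $\mu\in\operatorname{spec}(L)$ with $\boldsymbol b^\top\boldsymbol x\neq0$. The preliminary restriction to $\boldsymbol x\perp\boldsymbol 1$ is unnecessary: the three cases work for every eigenpair, and $\mu=0$ falls under the first alternative because $\boldsymbol 1\perp\boldsymbol b$.

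\textbf{The case $w=0$.} The projection step cancels the factor $w$, so Step 1 proves the dichotomy only for $w>0$. This restriction is forced, because at $w=0$ the dichotomy is false. A simple $\lambda_k$ with $\beta_k(p,q)\neq0$ is then an eigenvalue of $M=L$ with no eigenvector orthogonal to $\boldsymbol b$, and the secular equation reads $1=0$. You should state $w>0$ explicitly for that part; the interlacing chain holds for all $w\ge0$.

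\textbf{General interlacing.} You prove only the displayed chain, not the general claim that the eigenvalues interlace. The same Courant--Fischer argument gives it. The subspace $\operatorname{span}\{\boldsymbol u_0,\dots,\boldsymbol u_{k+1}\}\cap\boldsymbol b^\perp$ has dimension at least $k+1$. On it, $\boldsymbol x^\top M\boldsymbol x=\boldsymbol x^\top L\boldsymbol x\le\lambda_{k+1}(L)\|\boldsymbol x\|_2^2$. Hence $\lambda_k(L)\le\lambda_k(M)\le\lambda_{k+1}(L)$ for $0\le k\le n-2$.

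\textbf{What each approach buys.} The citation gives the paper brevity and full generality. Your version isolates exactly what is used later: the secular equation off $\operatorname{spec}(L)$ and $\lambda_1(L)\le\lambda_1(M)\le\lambda_2(L)$. That is all Lemma~\ref{lem:gain_ceiling} and Theorem~\ref{thm:comparison_two_mode} require.
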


\begin{lemma}[Interlacing ceiling for the gain]\label{lem:gain_ceiling}
	For every pair $(p,q)$ and every $w\ge 0$,
	\begin{equation*}
		\begin{split}
			0\le \Delta_{p,q}(w)&:=
			\lambda_1(L_{p,q}(w))-\lambda_1(L)\le \gamma,\\
			\gamma&:=\lambda_2(L)-\lambda_1(L).
		\end{split}
	\end{equation*}
	Moreover, for any unit Fiedler vector $\boldsymbol{u}_1$,
	\[
	\Delta_{p,q}(w)\le w(u_{1,p}-u_{1,q})^2.
	\]
\end{lemma}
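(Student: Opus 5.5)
The first assertion is a direct consequence of Lemma~\ref{lem:lambda_update}. The interlacing inequalities \eqref{eq:interlacing_inequalities} give $\lambda_1(L)\le\lambda_1(L_{p,q}(w))\le\lambda_2(L)$. Subtracting $\lambda_1(L)$ throughout yields $0\le\Delta_{p,q}(w)\le\lambda_2(L)-\lambda_1(L)=\gamma$. If one prefers not to rely on the cited interlacing, two elementary facts suffice:
\begin{itemize}
	\item the lower bound follows from Loewner monotonicity, since $L_{p,q}(w)\succeq L$;
	\item the upper bound follows from Courant--Fischer applied to the two-dimensional subspace $\mathrm{span}\{\boldsymbol{u}_1,\boldsymbol{u}_2\}\cap\boldsymbol{b}^\perp\cap\boldsymbol{1}^\perp$. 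This intersection contains a nonzero vector $\boldsymbol{x}\perp\boldsymbol{1}$ with $\boldsymbol{x}^\top L_{p,q}(w)\boldsymbol{x}=\boldsymbol{x}^\top L\boldsymbol{x}\le\lambda_2\|\boldsymbol{x}\|^2$, which bounds the second eigenvalue of $L_{p,q}(w)$ restricted to $\boldsymbol{1}^\perp$.
\end{itemize}

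For the second assertion, I will use the Rayleigh quotient \eqref{eq:rr_lambda1} for the augmented Laplacian with the test vector $\boldsymbol{x}=\boldsymbol{u}_1$. This vector is admissible because it is unit and orthogonal to $\boldsymbol{1}$, and the Laplacian of the augmented graph still has kernel $\mathrm{span}\{\boldsymbol{1}\}$. Then
\[
\lambda_1(L_{p,q}(w))\le \boldsymbol{u}_1^\top L\boldsymbol{u}_1+w(\boldsymbol{b}^\top\boldsymbol{u}_1)^2=\lambda_1(L)+w(u_{1,p}-u_{1,q})^2 .
\]
Subtracting $\lambda_1(L)$ gives $\Delta_{p,q}(w)\le w(u_{1,p}-u_{1,q})^2$. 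This argument works for any choice of unit Fiedler vector, including the case where $\lambda_1$ is not simple, since only $L\boldsymbol{u}_1=\lambda_1\boldsymbol{u}_1$ is used.

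Nothing here is a genuine obstacle. The only point needing care is confirming that the variational characterization applies to $L_{p,q}(w)$ with the same constraint set $\boldsymbol{x}\perp\boldsymbol{1}$. This holds because $L_{p,q}(w)$ is again the Laplacian of a connected weighted graph, so $\boldsymbol{1}$ still spans its kernel. I will state this explicitly before invoking \eqref{eq:rr_lambda1}.
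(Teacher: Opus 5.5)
Your proposal is correct and follows the paper's proof: the ceiling $0\le\Delta_{p,q}(w)\le\gamma$ comes from the interlacing in Lemma~\ref{lem:lambda_update}, and the bound $w(u_{1,p}-u_{1,q})^2$ comes from evaluating the Rayleigh quotient of $L_{p,q}(w)$ at the test vector $\boldsymbol{u}_1$. One wording fix in your optional aside: $\mathrm{span}\{\boldsymbol{u}_1,\boldsymbol{u}_2\}\cap\boldsymbol{b}^\perp$ is only guaranteed to be one-dimensional, and a vector in it bounds the smallest eigenvalue of $L_{p,q}(w)$ on $\boldsymbol{1}^\perp$, namely $\lambda_1(L_{p,q}(w))$, not a "second" eigenvalue of that restriction; that smallest eigenvalue is exactly the quantity you need to bound.
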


\begin{theorem}\label{thm:lambda_quant}
	\emph{(Variational characterization of	the updated algebraic connectivity).}
	Let $\lambda_1(w):=\lambda_1(L_{p,q}(w))$. Then
	\begin{equation}\label{eq:var_lam_w}
		\lambda_1(w)=
		\min_{\substack{\boldsymbol{x}\perp\boldsymbol{1}\\ \|\boldsymbol{x}\|_2=1}}
		\left(\boldsymbol{x}^\top L\boldsymbol{x}+w(x_p-x_q)^2\right).
	\end{equation}
	The map $w\mapsto\lambda_1(w)$ is nondecreasing and concave. If
	$\lambda_1(w)$ is simple with unit eigenvector $\boldsymbol{v}(w)$,
	then
	\[
	\frac{\mathrm{d}}{\mathrm{d}w}\lambda_1(w)=(v_p(w)-v_q(w))^2.
	\]
	Finally,
	\[
	\lim_{w\to\infty}\lambda_1(w)=
	\min_{\substack{\boldsymbol{x}\perp\boldsymbol{1},\ x_p=x_q\\ \|\boldsymbol{x}\|_2=1}}
	\boldsymbol{x}^\top L\boldsymbol{x}.
	\]
\end{theorem}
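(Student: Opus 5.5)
The plan is to obtain every claim of Theorem~\ref{thm:lambda_quant} from the Courant--Fischer formula \eqref{eq:rr_lambda1} applied to $L_{p,q}(w)$. First, $L_{p,q}(w)\boldsymbol{1}=0$ because $\boldsymbol{b}^\top\boldsymbol{1}=0$. The augmented graph is still connected, so zero is a simple eigenvalue with eigenvector $\boldsymbol{1}$. Hence $\lambda_1(w)$ is the minimum of the Rayleigh quotient over unit $\boldsymbol{x}\perp\boldsymbol{1}$. Since $\boldsymbol{x}^\top \boldsymbol{b}\boldsymbol{b}^\top\boldsymbol{x}=(x_p-x_q)^2$, this is exactly \eqref{eq:var_lam_w}.

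Monotonicity and concavity follow from this formula directly. For each fixed $\boldsymbol{x}$, the map $f_{\boldsymbol{x}}(w)=\boldsymbol{x}^\top L\boldsymbol{x}+w(x_p-x_q)^2$ is affine in $w$ with nonnegative slope. Therefore $\lambda_1(w)=\min_{\boldsymbol{x}}f_{\boldsymbol{x}}(w)$ is a pointwise minimum of nondecreasing affine functions over a compact set. Such a minimum is nondecreasing and concave.

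For the derivative, assume $\lambda_1(w)$ is simple. I would use analytic perturbation theory for symmetric matrices depending analytically on $w$: a simple eigenvalue and its normalized eigenvector can be chosen differentiable near $w$. The Hellmann--Feynman identity then gives $\lambda_1'(w)=\boldsymbol{v}(w)^\top\boldsymbol{b}\boldsymbol{b}^\top\boldsymbol{v}(w)=(v_p-v_q)^2$. Differentiating $\boldsymbol{v}^\top\boldsymbol{v}=1$ kills the term involving $\boldsymbol{v}'$. As an alternative avoiding perturbation theory, I would take the one-sided derivatives of the concave function. Danskin's theorem identifies them as the minimum and maximum of $(x_p-x_q)^2$ over the minimizer set. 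When the eigenvalue is simple that set is $\{\pm\boldsymbol{v}(w)\}$, so the two one-sided derivatives coincide.

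The limit is the main obstacle. Let $\mu_\infty$ denote the constrained minimum on the right-hand side.

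\textbf{Upper bound.} Any feasible $\boldsymbol{x}$ with $x_p=x_q$ gives $\lambda_1(w)\le\boldsymbol{x}^\top L\boldsymbol{x}$ for all $w$. Hence $\lambda_1(w)\le\mu_\infty$. Since $\lambda_1(w)$ is nondecreasing, the limit $\lambda_\infty\le\mu_\infty$ exists.

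\textbf{Lower bound.} Pick a minimizer $\boldsymbol{x}^{(w)}$ for each $w$. By compactness of the unit sphere in $\boldsymbol{1}^\perp$, extract a sequence $w_j\to\infty$ along which $\boldsymbol{x}^{(w_j)}\to\boldsymbol{x}^*$. From $w_j(x_p-x_q)^2\le\lambda_1(w_j)\le\mu_\infty$ we get $(x^{(w_j)}_p-x^{(w_j)}_q)^2\le\mu_\infty/w_j\to0$, so $x^*_p=x^*_q$. The limit $\boldsymbol{x}^*$ is also unit and orthogonal to $\boldsymbol{1}$, so it is feasible for the constrained problem. Dropping the nonnegative penalty term, $\boldsymbol{x}^{(w_j)\top}L\boldsymbol{x}^{(w_j)}\le\lambda_1(w_j)$. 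Passing to the limit by continuity gives $\mu_\infty\le\boldsymbol{x}^{*\top}L\boldsymbol{x}^*\le\lambda_\infty$.

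Combining the two bounds gives $\lambda_\infty=\mu_\infty$, which is the stated limit.
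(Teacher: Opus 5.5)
Your proof is correct and follows the same route as the paper. Like the paper, you use Courant--Fischer for $L_{p,q}(w)$ together with the quadratic-form identity, get concavity and monotonicity because $\lambda_1(w)$ is a pointwise minimum of nondecreasing affine functions, apply Hellmann--Feynman for the derivative, and take a penalty limit as $w\to\infty$. The paper only invokes that last step as ``the standard penalty limit'', so your compactness argument (an upper bound from feasibility, and a lower bound from a convergent subsequence of minimizers forced onto $x_p=x_q$) just supplies the details it omits.
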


\subsection{A two-mode comparison theorem}

We next isolate an abstract mechanism for achieving near-maximal gain in
algebraic connectivity. By Lemma~\ref{lem:gain_ceiling}, the largest
possible gain is the interlacing ceiling $\gamma=\lambda_2-\lambda_1$.
The result below shows that this ceiling is nearly attained whenever the
chord induces a large jump in the first mode, a small jump in the second
mode, and a controlled contribution from higher modes.

If $\lambda_1=\lambda_2$, then by interlacing $\Delta_{p,q}(w)\le \lambda_2-\lambda_1=0$, hence all admissible chords have zero algebraic-connectivity gain and the near-optimality statement is trivial. We henceforth assume $\lambda_1<\lambda_2$.

Recall that
$\beta_k(p,q)\coloneqq \boldsymbol{u}_k^\top\boldsymbol{b}=u_{k,p}-u_{k,q}$,
$k=1,\dots,n-1$. For a fixed pair $(p,q)$, define
\begin{equation}\label{eq:T3_tail}
	T_{3+}(p,q):=\sum_{k=3}^{n-1}\frac{\beta_k(p,q)^2}{\lambda_k}.
\end{equation}

From Eqs.~\eqref{eq:sigma_infty} and \eqref{eq:resistance_def}, for any
connected weighted graph and any pair $(p,q)$,
\begin{equation*}
	R_{pq}
	=
	\sum_{k=1}^{n-1}\frac{\beta_k(p,q)^2}{\lambda_k}
	=
	\frac{\beta_1(p,q)^2}{\lambda_1}
	+\frac{\beta_2(p,q)^2}{\lambda_2}
	+T_{3+}(p,q).
\end{equation*}
The comparison theorem below only requires the chord to be non-negligible in strength, namely $w\ge w_0$. For weighted cycles, we express this condition in resistance units: the chord resistance $r_c:=1/w$ should be no larger than a fixed multiple of the mean native edge resistance. This native-scale condition is stated precisely in Assumption~\ref{ass:deterministic}; any separate upper conductance budget is a design constraint and is not part of the spectral comparison argument.

\begin{lemma}[Higher-mode tail and effective resistance]\label{lem:two_mode_resistance}
	If $\lambda_2\le \rho_0\lambda_3$ for some $\rho_0\in(0,1)$, then for
	every $\mu\in[\lambda_1,\lambda_2)$,
	\begin{equation}\label{eq:tail_resolvent_bound}
		\sum_{k=3}^{n-1}\frac{\beta_k(p,q)^2}{\lambda_k-\mu}
		\le \frac{1}{1-\rho_0}T_{3+}(p,q).
	\end{equation}
\end{lemma}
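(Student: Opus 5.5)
The plan is a termwise comparison between $\lambda_k-\mu$ and $\lambda_k$ for each $k\ge 3$, followed by summation. Fix $\mu\in[\lambda_1,\lambda_2)$ and an index $k\ge 3$. Since the eigenvalues are ordered, $\lambda_k\ge\lambda_3$. Hence $\mu<\lambda_2\le\rho_0\lambda_3\le\rho_0\lambda_k$. This gives
\[
\lambda_k-\mu\ \ge\ \lambda_k-\rho_0\lambda_k=(1-\rho_0)\lambda_k>0 .
\]
Two facts follow. Every denominator in the left-hand side of \eqref{eq:tail_resolvent_bound} is strictly positive, so no pole of the resolvent is met. Moreover,
\[
\frac{1}{\lambda_k-\mu}\le\frac{1}{1-\rho_0}\cdot\frac{1}{\lambda_k}.
\]

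Next, multiply this inequality by the nonnegative weight $\beta_k(p,q)^2$ and sum over $k=3,\dots,n-1$. By the definition \eqref{eq:T3_tail} of $T_{3+}(p,q)$, this yields exactly
\[
\sum_{k=3}^{n-1}\frac{\beta_k(p,q)^2}{\lambda_k-\mu}\le\frac{1}{1-\rho_0}\,T_{3+}(p,q).
\]
Since $\rho_0\in(0,1)$, the factor $1/(1-\rho_0)$ is finite and positive, so no further case analysis is needed.

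There is essentially no obstacle. The only point requiring care is the chain $\mu<\lambda_2\le\rho_0\lambda_3\le\rho_0\lambda_k$. It depends on $\mu$ being strictly below $\lambda_2$ and on the ordering $\lambda_3\le\lambda_k$ for all $k\ge3$. Together these ensure positivity of each denominator, even when some $\lambda_k$ with $k\ge3$ coincide.

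It is also worth noting a consequence for later use. Since $T_{3+}(p,q)\le R_{pq}$, by the spectral expansion of $R_{pq}$ given just before the lemma, the tail of the secular function \eqref{eq:secular_spectral} is bounded by $R_{pq}/(1-\rho_0)$ uniformly in $\mu$ on the relevant interval.
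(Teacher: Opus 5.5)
Your proposal is correct and follows the same approach as the paper's proof. Both arguments establish the termwise bound $\lambda_k-\mu\ge(1-\rho_0)\lambda_k$ for $k\ge3$, using $\mu<\lambda_2\le\rho_0\lambda_3\le\rho_0\lambda_k$ (the paper writes this as $\lambda_k-\mu\ge\lambda_k-\lambda_2$), and then sum against the nonnegative weights $\beta_k(p,q)^2$.
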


\begin{theorem}\label{thm:comparison_two_mode}
	\emph{(Two-mode secular comparison).}
	Fix a connected weighted graph and a pair $(p,q)$. Let
	\begin{equation*}
		\gamma:=\lambda_2-\lambda_1,
		\quad
		\beta_k:=\beta_k(p,q),
		\quad
		T_{3+}:=T_{3+}(p,q).
	\end{equation*}
	Assume that $\lambda_1<\lambda_2$ and that, for some constants
	\begin{equation*}
		\begin{split}
			&\rho_0\in(0,1),\quad a_0>0,\quad A_0>0,\\
			&L_0>0,\quad w_0>0,\quad \theta_0\in(0,1),
		\end{split}
	\end{equation*}
	one has
	\begin{equation}\label{eq:comparison_hyp1}
		\begin{split}
			\lambda_2\le \rho_0\lambda_3,
			\quad
			&\beta_1^2\ge \frac{a_0}{n},
			\quad
			T_{3+}\le A_0 n,\\
			&\lambda_2\le \frac{L_0}{n^2},
			\quad
			w\ge w_0,
		\end{split}
	\end{equation}
	and
	\begin{equation}\label{eq:comparison_dominance}
		\gamma\left(\frac1w+\frac{T_{3+}}{1-\rho_0}\right)
		\le \theta_0\beta_1^2.
	\end{equation}
	If, for some $\varepsilon\ge 0$,
	\begin{equation}\label{eq:comparison_small_beta2}
		\beta_2^2\le \varepsilon\beta_1^2,
	\end{equation}
	then
	\begin{equation}\label{eq:ceiling_deficit_gamma}
		0\le \gamma-\Delta_{p,q}(w)
		\le \frac{\gamma}{1-\theta_0}\,\varepsilon.
	\end{equation}
	Consequently,
	\begin{equation}\label{eq:ceiling_deficit_wn}
		0\le \gamma-\Delta_{p,q}(w)
		\le C\frac{w}{n}\varepsilon,
	\end{equation}
	where $C$ depends only on $L_0$, $w_0$, and $\theta_0$.
\end{theorem}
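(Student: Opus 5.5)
The plan is to work directly with the secular equation of Lemma~\ref{lem:lambda_update}, written in terms of the deficit $\delta:=\gamma-\Delta_{p,q}(w)=\lambda_2-\mu$, where $\mu:=\lambda_1(L_{p,q}(w))$. By interlacing \eqref{eq:interlacing_inequalities}, $\mu\in[\lambda_1,\lambda_2]$, so $0\le\delta\le\gamma$. This already gives the lower bound in \eqref{eq:ceiling_deficit_gamma}.

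\emph{Ruling out the endpoints.} First, $\mu=\lambda_1$ is impossible. Since $\lambda_1<\lambda_2$, the eigenvalue $\lambda_1$ is simple, and $\beta_1^2\ge a_0/n>0$ means $\boldsymbol u_1\not\perp\boldsymbol b$. So $\lambda_1$ is not a retained eigenvalue. It is not a root of \eqref{eq:secular_spectral} either, because the $k=1$ term blows up there. If $\mu=\lambda_2$, then $\delta=0$ and the claim is trivial. Likewise $\mu=\lambda_k$ for $k\ge 3$ cannot occur, since $\lambda_3>\lambda_2$ by $\lambda_2\le\rho_0\lambda_3$ with $\rho_0<1$.

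\emph{Main case $\mu\in(\lambda_1,\lambda_2)$.} Here $\mu$ satisfies \eqref{eq:secular_spectral}, which after isolating the first mode reads
\[
\frac{\beta_1^2}{\gamma-\delta}=\frac1w+\frac{\beta_2^2}{\delta}+\sum_{k=3}^{n-1}\frac{\beta_k^2}{\lambda_k-\mu}.
\]
\begin{enumerate}
\item Lemma~\ref{lem:two_mode_resistance} bounds the tail by $T_{3+}/(1-\rho_0)$.
\item The dominance hypothesis \eqref{eq:comparison_dominance} then gives $\frac1w+\sum_{k\ge3}\frac{\beta_k^2}{\lambda_k-\mu}\le \theta_0\beta_1^2/\gamma$.
\item Since $\gamma-\delta\le\gamma$, the left-hand side is at least $\beta_1^2/\gamma$. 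Rearranging,
\[
\frac{\beta_2^2}{\delta}\ \ge\ \frac{\beta_1^2}{\gamma}-\frac{\theta_0\beta_1^2}{\gamma}=\frac{(1-\theta_0)\beta_1^2}{\gamma}.
\]
\item If $\beta_2=0$, this is a contradiction, so this case cannot occur and $\mu=\lambda_2$. Otherwise $\delta\le \gamma\beta_2^2/((1-\theta_0)\beta_1^2)\le \gamma\varepsilon/(1-\theta_0)$ by \eqref{eq:comparison_small_beta2}. This is \eqref{eq:ceiling_deficit_gamma}.
\end{enumerate}

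\emph{Passing to \eqref{eq:ceiling_deficit_wn}.} Use $\gamma\le\lambda_2\le L_0/n^2$ together with $1\le w/w_0$:
\[
\gamma\le \frac{L_0}{w_0}\cdot\frac{w}{n^2}\le\frac{L_0}{w_0}\cdot\frac{w}{n}.
\]
This gives $C=L_0/((1-\theta_0)w_0)$. The hypotheses $T_{3+}\le A_0n$ and the explicit constant $a_0$ are not needed for the inequality itself. They only make the dominance condition checkable later.

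\emph{Main obstacle.} The only delicate point is the bookkeeping of degenerate cases: whether $\mu$ could be an eigenvalue of $L$ with an eigenvector orthogonal to $\boldsymbol b$ rather than a secular root, and the case $\beta_2=0$. Both are handled by the case split above.
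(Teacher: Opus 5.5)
Your proposal is correct and follows essentially the paper's own argument: you rewrite the secular equation in the deficit variable $\lambda_2-\mu$, bound the tail via Lemma~\ref{lem:two_mode_resistance}, absorb $1/w+H(\mu)$ using the dominance hypothesis, and pass to the $w/n$ form via $\gamma\le L_0/n^2\le (L_0/w_0)\,w/n$, obtaining the same constant $C=L_0/((1-\theta_0)w_0)$. The only differences are cosmetic. You lower-bound $\beta_1^2/(\gamma-\delta)$ by $\beta_1^2/\gamma$ directly, whereas the paper multiplies through by $d(\gamma-d)$, and you treat the case $\beta_2=0$ explicitly, which the paper does not need to separate.
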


Lemma~\ref{lem:gain_ceiling} implies
$\Delta_{u,v}(w)\le \gamma$ for every admissible chord
$\{u,v\}\in\mathcal{E}_{\mathrm{ch}}$, and therefore we have the following corollary.

\begin{corollary}[From ceiling deficit to near-optimality]\label{cor:ceiling_to_opt}
	For every admissible chord $\{p,q\}\in\mathcal{E}_{\mathrm{ch}}$ and every $w\ge 0$,
	\begin{equation}\label{eq:near_opt_from_ceiling}
		0\le
		\max_{\{u,v\}\in\mathcal{E}_{\mathrm{ch}}}\Delta_{u,v}(w)-\Delta_{p,q}(w)
		\le
		\gamma-\Delta_{p,q}(w).
	\end{equation}
\end{corollary}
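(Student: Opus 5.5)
The plan is to combine the interlacing ceiling of Lemma~\ref{lem:gain_ceiling}, applied to every admissible chord simultaneously, with elementary properties of the maximum over the finite set $\mathcal{E}_{\mathrm{ch}}$.

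First, fix $w\ge 0$. The set $\mathcal{E}_{\mathrm{ch}}$ is finite. Since $\{p,q\}\in\mathcal{E}_{\mathrm{ch}}$, it is also nonempty, so the maximum $M(w):=\max_{\{u,v\}\in\mathcal{E}_{\mathrm{ch}}}\Delta_{u,v}(w)$ exists. The left inequality in \eqref{eq:near_opt_from_ceiling} is then immediate, because $\{p,q\}$ is one of the competitors, so $M(w)\ge \Delta_{p,q}(w)$.

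Second, for the right inequality, we apply Lemma~\ref{lem:gain_ceiling} to each $\{u,v\}\in\mathcal{E}_{\mathrm{ch}}$. This gives $\Delta_{u,v}(w)\le \gamma=\lambda_2(L)-\lambda_1(L)$. The key point is that $\gamma$ depends only on the base cycle Laplacian $L$, not on the chord. The bound is therefore uniform over $\mathcal{E}_{\mathrm{ch}}$, and taking the maximum yields $M(w)\le\gamma$. Subtracting $\Delta_{p,q}(w)$ from both sides gives $M(w)-\Delta_{p,q}(w)\le \gamma-\Delta_{p,q}(w)$. This completes the chain.

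There is essentially no obstacle. The only point that deserves an explicit sentence is the uniformity of the ceiling $\gamma$, which Lemma~\ref{lem:gain_ceiling} itself provides via \eqref{eq:interlacing_inequalities}: $\lambda_1(L+w\boldsymbol{b}\boldsymbol{b}^\top)\le\lambda_2(L)$ holds for every $\boldsymbol{b}$. We will also remark on the degenerate case $\lambda_1=\lambda_2$. Then all three quantities in \eqref{eq:near_opt_from_ceiling} vanish, so the statement holds trivially without the standing assumption $\lambda_1<\lambda_2$. Combined with Theorem~\ref{thm:comparison_two_mode}, this shows that any chord with small ceiling deficit is near-optimal among all admissible chords.
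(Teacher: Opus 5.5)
Your proposal is correct and matches the paper's argument: the paper likewise uses Lemma~\ref{lem:gain_ceiling} to get the chord-independent bound $\Delta_{u,v}(w)\le\gamma$ for every admissible chord, takes the maximum over $\mathcal{E}_{\mathrm{ch}}$, and the left inequality holds because $\{p,q\}$ is one of the competitors. Your extra remark on the degenerate case $\lambda_1=\lambda_2$ is harmless but unnecessary, since the general argument never uses $\lambda_1<\lambda_2$.
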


\subsection{Deterministic discrepancy theory}\label{subsec:deterministic_discrepancy}

We now verify the abstract hypotheses of
Theorem~\ref{thm:comparison_two_mode} for weighted cycles whose edge
conductances are uniformly bounded. The key device is the resistance
arclength parametrization of the cycle. Under small discrepancy, the first
two discrete eigenvectors are close to the first two Fourier modes on the
circle, which in turn identifies near-antipodal chords as nearly optimal.

Throughout this subsection, write $c_i:=c_{i,i+1}$ and $r_i:=1/c_i$.

\begin{assumption}[Bounded conductances and native-scale chord]\label{ass:deterministic}
	The edge conductances of the weighted cycle satisfy
	\begin{equation}\label{eq:det_kappa_bounds}
		1\le c_i\le \kappa,
		\qquad i=0,\dots,n-1,
	\end{equation}
	so that the native edge resistances obey $r_i=1/c_i\in[\kappa^{-1},1]$.
	The available chord budget is taken on the native scale:
	\begin{equation}\label{eq:w_native_scale}
		\frac{1}{\hat w}\le C_w\bar r ,
	\end{equation}
	where $\bar r:=n^{-1}\sum_i r_i$ and $C_w>0$ is fixed. Since the budget is saturated, the chord used in the analysis has conductance $w=\hat w$.
\end{assumption}

All constants below may depend on $\kappa$ and $C_w$. Set
\[
S\coloneqq \sum_{i=0}^{n-1}r_i,
\qquad
\bar r\coloneqq \frac{S}{n},
\qquad
s_0\coloneqq 0,
\qquad
s_i\coloneqq \sum_{k=0}^{i-1}r_k.
\]
Let $r_{\max}\coloneqq \max_i r_i$, and define the resistance distance
along the cycle by
\[
d_R(p,q)\coloneqq \min\{|s_p-s_q|,\ S-|s_p-s_q|\}.
\]
For $1\le \ell\le n$, define the $\ell$-step cumulative resistance
\[
A_{p,\ell}\coloneqq \sum_{j=0}^{\ell-1}r_{p+j},
\]
with indices modulo $n$. Introduce the discrepancy measures
\begin{equation}\label{eq:def_D}
	D\coloneqq
	\max_{0\le p\le n-1}\max_{1\le \ell\le n}
	\left|A_{p,\ell}-\frac{\ell}{n}S\right|,
	\qquad
	\Delta\coloneqq \frac{D}{S},
	\qquad
	\eta\coloneqq \frac{r_{\max}}{S},
	\qquad
	\delta_n\coloneqq \Delta+\eta.
\end{equation}
Writing $y_i\coloneqq s_i/S$, the definition of $D$ gives the node
discrepancy bound
\begin{equation}\label{eq:node_discrepancy}
	\max_{0\le i\le n}\left|y_i-\frac{i}{n}\right|\le \Delta.
\end{equation}

The technical verification of the small-discrepancy regime is lengthy but standard in spirit: resistance arclength converts the weighted cycle into a nearly uniform mesh on a circle; the first two eigenvectors are then close to the first Fourier sine and cosine modes; a near-antipodal resistance-balanced chord has a large jump in the first mode and a small jump in the second. The detailed quadrature, eigenvector-localization, and verification lemmas are collected in Appendix~\ref{app:auxiliary-results}. The main consequence is the following theorem.

\begin{theorem}[Near-optimal chord selection under deterministic discrepancy control]\label{thm:main_optimal_disc}
	There exist constants $\delta_0=\delta_0(\kappa,C_w)>0$,
	$n_0=n_0(\kappa,C_w)\in\mathbb{N}$, and $C_{\kappa,C_w}>0$ such that
	the following holds. Assume \eqref{eq:det_kappa_bounds},
	\eqref{eq:w_native_scale}, $\delta_n\le \delta_0$, and $n\ge n_0$.
	Let
	\[
	r_{\max}\le \zeta\le S/8,
	\qquad
	\mathcal A_\zeta:=
	\left\{
	(p,q):\left|d_R(p,q)-\frac S2\right|\le \zeta
	\right\},
	\]
	and let $(\widehat p,\widehat q)\in\mathcal A_\zeta$ maximize
	$|\beta_1(p,q)|$ over $\mathcal A_\zeta$. Then
	\begin{equation}\label{eq:main_optimal_disc_gamma}
		0\le
		\max_{\{p,q\}\in\mathcal{E}_{\mathrm{ch}}}\Delta_{p,q}(w)-\Delta_{\widehat p,\widehat q}(w)
		\le
		C_{\kappa,C_w}(\lambda_2-\lambda_1)
		\left[
		\left(\frac{\zeta}{S}\right)^2+\sqrt{\delta_n}
		\right].
	\end{equation}
	In particular,
	\begin{equation}\label{eq:main_optimal_disc_additive}
		0\le
		\max_{\{p,q\}\in\mathcal{E}_{\mathrm{ch}}}\Delta_{p,q}(w)-\Delta_{\widehat p,\widehat q}(w)
		\le
		C_{\kappa,C_w}\frac{w}{n}
		\left[
		\left(\frac{\zeta}{S}\right)^2+\sqrt{\delta_n}
		\right].
	\end{equation}
	For the native choice $\zeta=r_{\max}$, which is admissible for all
	$n\ge 8\kappa$,
	\begin{equation}\label{eq:main_optimal_disc_additive_native}
		0\le
		\max_{\{p,q\}\in\mathcal{E}_{\mathrm{ch}}}\Delta_{p,q}(w)-\Delta_{\widehat p,\widehat q}(w)
		\le
		C_{\kappa,C_w}\frac{w}{n}\sqrt{\delta_n}.
	\end{equation}
\end{theorem}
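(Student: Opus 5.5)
We will reduce the statement to Theorem~\ref{thm:comparison_two_mode} combined with Corollary~\ref{cor:ceiling_to_opt}. Concretely, we plan to show that the chord $(\widehat p,\widehat q)$ satisfies every hypothesis \eqref{eq:comparison_hyp1}--\eqref{eq:comparison_dominance} with constants depending only on $\kappa,C_w$, and that \eqref{eq:comparison_small_beta2} holds with
\[
\varepsilon\le C\bigl[(\zeta/S)^2+\sqrt{\delta_n}\bigr].
\]
Then \eqref{eq:ceiling_deficit_gamma} and \eqref{eq:near_opt_from_ceiling} give \eqref{eq:main_optimal_disc_gamma}, and \eqref{eq:ceiling_deficit_wn} gives \eqref{eq:main_optimal_disc_additive}. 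For \eqref{eq:main_optimal_disc_additive_native}, take $\zeta=r_{\max}$. Then $\zeta/S=\eta\le\delta_n$, so $(\zeta/S)^2\le\delta_n\le\sqrt{\delta_n}$ once $\delta_0\le1$. Admissibility of this choice follows from $S\ge n/\kappa$ and $r_{\max}\le 1$, which give $r_{\max}\le S/8$ when $n\ge 8\kappa$.

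\textbf{Spectral scale.} First we establish the spectral facts via resistance arclength. Map vertex $i$ to $y_i=s_i/S$, and compare the Rayleigh quotient of $L$ with the continuum Dirichlet form on the circle of length $S$, whose weights are the nodal masses. By \eqref{eq:node_discrepancy} and $\eta\to0$, the nodal masses are close to uniform. Quadrature with the trial functions $\cos(2\pi y_i)$ and $\sin(2\pi y_i)$ bounds $\lambda_1,\lambda_2\le (4\pi^2/(nS))(1+O(\delta_n))$, which is $\le L_0/n^2$ because $S\ge n/\kappa$. A matching lower bound on $\lambda_3$, approximately $16\pi^2/(nS)$, gives $\lambda_2\le\rho_0\lambda_3$. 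We then prove eigenvector localization: by a Davis--Kahan argument on the two-dimensional near-degenerate cluster, the span of $\boldsymbol u_1,\boldsymbol u_2$ lies within $O(\sqrt{\delta_n})$ of the span of the normalized discrete Fourier modes, and the discrepancy is why the error enters as $\sqrt{\delta_n}$. Rotating within the cluster, we can write $\boldsymbol u_1\approx\sqrt{2/n}\cos(2\pi y-\phi)$ and $\boldsymbol u_2\approx\sqrt{2/n}\sin(2\pi y-\phi)$. The same $\ell^2$ estimate must be upgraded to pointwise (sup-norm) control. We intend to do this through the discrete harmonic-extension/ODE structure of cycle eigenvectors: on a path, an eigenvector is determined by a three-term recurrence, which propagates $\ell^2$ closeness into uniform closeness with only an $O(\sqrt{\delta_n})$ loss.

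\textbf{Chord quantities.} Next we check the chord quantities. For a pair in $\mathcal A_\zeta$, the resistance positions differ by $1/2+O(\zeta/S)$, so
\[
\beta_1\approx 2\sqrt{2/n}\cos(2\pi y_p-\phi),\qquad \beta_2\approx\sqrt{2/n}\,O(\zeta/S).
\]
Both estimates carry $O(\sqrt{\delta_n/n})$ pointwise errors. Because the grid has mesh at most $\eta$ in $y$, a pair in $\mathcal A_\zeta$ with $2\pi y_p-\phi$ within $O(\eta)$ of $0$ or $\pi$ exists. The maximizer therefore has $\beta_1^2\ge 8(1-C\sqrt{\delta_n})/n\ge a_0/n$, and for it $\beta_2^2/\beta_1^2\le C[(\zeta/S)^2+\sqrt{\delta_n}]$, squaring the $\beta_2$ bound where needed. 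For the tail, $T_{3+}\le R_{pq}\le S/4\le n/4$ by Lemma~\ref{lem:Rab_cycle}, so $A_0=1/4$. For dominance \eqref{eq:comparison_dominance}, we use $\gamma=O(\sqrt{\delta_n}/(nS))$, which follows from splitting the degenerate continuum eigenvalue by at most $O(\delta_n)$ in relative terms. We also have $1/w\le C_w\bar r\le C_w$ and $T_{3+}/(1-\rho_0)=O(n)$, so the left side of \eqref{eq:comparison_dominance} is $O(\sqrt{\delta_n}/n)$. This is at most $\theta_0 a_0/n$ once $\delta_0$ is small, which is where $\delta_0$ and $n_0$ are fixed.

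\textbf{Main obstacle.} The hardest step will be the uniform eigenvector localization inside the near-degenerate pair with explicit $\sqrt{\delta_n}$ rates. The gap $\gamma$ is itself small, so individual eigenvectors are not stable; only their span is. The argument must therefore be phrased through the phase $\phi$ of the rotated basis. We would first try to get $\ell^2$ control via Courant--Fischer on the two-dimensional cluster, and then obtain sup-norm control through the transfer-matrix recurrence along the resistance arclength.
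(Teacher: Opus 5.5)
Your reduction matches the paper's: you verify the hypotheses of Theorem~\ref{thm:comparison_two_mode} for $(\widehat p,\widehat q)$ and then apply Corollary~\ref{cor:ceiling_to_opt}. The tail bound $T_{3+}\le R_{\widehat p\widehat q}\le n/4$, the weight bound, the dominance check and the native choice $\zeta=r_{\max}$ all work as you describe. The gap is in verifying \eqref{eq:comparison_small_beta2}. Your claim that pairs in $\mathcal A_\zeta$ satisfy $\beta_2\approx\sqrt{2/n}\,O(\zeta/S)$ is false. Use your normalization $u_{1,i}\approx\sqrt{2/n}\cos\theta_i$, $u_{2,i}\approx\sqrt{2/n}\sin\theta_i$, with $\theta_i=2\pi y_i-\phi$. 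Orient the pair so that $\theta_q=\theta_p+\pi+\varepsilon$, where $|\varepsilon|\le 2\pi\zeta/S$. Then, up to the $O(\sqrt{\delta_n/n})$ eigenvector errors,
\[
\beta_1\approx 2\sqrt{2/n}\,\cos\alpha\,\cos(\varepsilon/2),
\qquad
\beta_2\approx 2\sqrt{2/n}\,\sin\alpha\,\cos(\varepsilon/2),
\qquad
\alpha:=\theta_p+\varepsilon/2 .
\]

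\textbf{Why the claim fails.} Both first-harmonic modes change sign under the half-turn, so antipodal differencing doubles $u_2$ just as it doubles $u_1$. What it cancels are the even harmonics, and those sit in $T_{3+}$, not in $\beta_2$. For example, a near-antipodal pair with $\theta_p\approx\pi/2$ has $\beta_1\approx0$ and $|\beta_2|\approx2\sqrt{2/n}$. Membership in $\mathcal A_\zeta$ therefore gives no control of $\beta_2$, and there is no small $\beta_2$ bound to square.

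\textbf{The repair.} The suppression comes from maximizing $|\beta_1|$, which your sketch uses only to bound $\beta_1^2$ from below. The display gives $\beta_1^2+\beta_2^2\le(8+C\sqrt{\delta_n})/n$ on all of $\mathcal A_\zeta$. This is because the pointwise errors enter the squares linearly against main terms of size $n^{-1/2}$. The comparison pair gives $\beta_1(\widehat p,\widehat q)^2\ge 8(1-C\sqrt{\delta_n}-C\eta^2)/n$. Subtracting yields $\beta_2(\widehat p,\widehat q)^2\le C(\sqrt{\delta_n}+\eta^2)/n$. This is what Lemma~\ref{lem:beta_ratio_disc} does via $\sin^2\widehat\alpha\le 2(1-|\cos\widehat\alpha|)$. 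It is also why the rate is $\sqrt{\delta_n}$ rather than the $\delta_n$ that squaring would suggest.

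\textbf{Localization route.} Your approach differs from the paper's and can work, but two points need care.
\begin{enumerate}
\item A residual Davis--Kahan bound is unavailable. For the sampled sinusoid, $(L\boldsymbol x)_i\approx\frac12(r_{i-1}+r_i)\mu_1x_i$, which differs from $\bar r\mu_1x_i$ by a relative $O(1)$ amount under heterogeneous conductances. Only the variational argument works, and it needs $\lambda_1\ge\bar r\mu_1(1-C\delta_n)$. You use this bound implicitly for $\gamma$ but never derive it; the paper obtains it from Lemma~\ref{lem:continuous_mean_control} and the circle Poincar\'e inequality.
\item The recurrence cannot be compared with a sinusoid step by step. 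The vertex density in arclength is locally $1/r_i$, not $n/S$, so the comparison closes only after Abel summation against the counting discrepancy $D$. Done that way, it controls each eigenvector directly, so the cluster instability you single out is not really the obstacle. The $\pi/2$ phase offset between $u_1$ and $u_2$ then follows from $u_1\perp u_2$ via the quadrature estimate.
\end{enumerate}
The paper avoids the recurrence altogether. It bounds $\|h_\perp'\|_{L^2}$ by the Rayleigh-quotient deficit and applies the one-dimensional periodic Sobolev inequality to the piecewise-linear interpolant.
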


\begin{corollary}[Ratio-form near-optimality]\label{cor:main_optimal_disc_ratio}
	Assume the hypotheses of Theorem~\ref{thm:main_optimal_disc}. Let
	\[
	(p^\star,q^\star)\in\arg\max_{\{p,q\}\in\mathcal{E}_{\mathrm{ch}}}\Delta_{p,q}(w),
	\qquad
	\Delta^\star(w):=\Delta_{p^\star,q^\star}(w).
	\]
	Assume in addition that there exists $c_0\in(0,1]$ such that
	\begin{equation}\label{eq:ratio_nondegenerate}
		\Delta^\star(w)\ge c_0(\lambda_2-\lambda_1).
	\end{equation}
	Then
	\begin{equation}\label{eq:main_optimal_disc_ratio}
		1-C_{\kappa,C_w,c_0}
		\left[
		\left(\frac{\zeta}{S}\right)^2+\sqrt{\delta_n}
		\right]
		\le
		\frac{\Delta_{\widehat p,\widehat q}(w)}{\Delta^\star(w)}
		\le 1.
	\end{equation}
	For the native choice $\zeta=r_{\max}$,
	\begin{equation}\label{eq:main_optimal_disc_ratio_native}
		1-C_{\kappa,C_w,c_0}\sqrt{\delta_n}
		\le
		\frac{\Delta_{\widehat p,\widehat q}(w)}{\Delta^\star(w)}
		\le 1.
	\end{equation}
\end{corollary}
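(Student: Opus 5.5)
The plan is to derive the ratio bound directly from the additive bound \eqref{eq:main_optimal_disc_gamma} of Theorem~\ref{thm:main_optimal_disc}, using the nondegeneracy hypothesis \eqref{eq:ratio_nondegenerate} to turn an absolute deficit measured in units of $\gamma=\lambda_2-\lambda_1$ into a relative one.

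\textbf{Upper bound.} The inequality $\Delta_{\widehat p,\widehat q}(w)/\Delta^\star(w)\le 1$ is immediate from the definition of $\Delta^\star(w)$ as the maximum over $\mathcal{E}_{\mathrm{ch}}$. I will check one point here: the pair $(\widehat p,\widehat q)$ produced in Theorem~\ref{thm:main_optimal_disc} should be admissible, that is, nonadjacent. For $\zeta\le S/8$, a pair in $\mathcal A_\zeta$ has $d_R\ge 3S/8$. This exceeds $r_{\max}$ once $n\ge 8\kappa$, which $n_0$ may be assumed to guarantee, so the pair is not adjacent. Also, $\Delta^\star(w)>0$ because of \eqref{eq:ratio_nondegenerate} together with $\gamma>0$; the case $\gamma=0$ is excluded as in the text before Theorem~\ref{thm:comparison_two_mode}. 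Hence the ratio is well defined.

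\textbf{Lower bound.} Write $\epsilon_n:=(\zeta/S)^2+\sqrt{\delta_n}$. Theorem~\ref{thm:main_optimal_disc} gives
\[
\Delta^\star(w)-\Delta_{\widehat p,\widehat q}(w)\le C_{\kappa,C_w}\,\gamma\,\epsilon_n .
\]
Dividing by $\Delta^\star(w)$ and using $\gamma\le \Delta^\star(w)/c_0$ from \eqref{eq:ratio_nondegenerate} yields
\[
1-\frac{\Delta_{\widehat p,\widehat q}(w)}{\Delta^\star(w)}\le \frac{C_{\kappa,C_w}}{c_0}\,\epsilon_n .
\]
So \eqref{eq:main_optimal_disc_ratio} holds with $C_{\kappa,C_w,c_0}:=C_{\kappa,C_w}/c_0$.

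\textbf{Native case.} For the native choice $\zeta=r_{\max}$, admissible when $n\ge 8\kappa$, we have $(\zeta/S)^2=\eta^2$. Since $\eta\le\delta_n\le\delta_0$, it follows that $\eta^2\le \delta_n\le \sqrt{\delta_0}\,\sqrt{\delta_n}$. The $\eta^2$ term is therefore absorbed into $\sqrt{\delta_n}$ after enlarging the constant, which gives \eqref{eq:main_optimal_disc_ratio_native}.

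There is no real obstacle, because the corollary is purely algebraic once Theorem~\ref{thm:main_optimal_disc} is granted. The only care needed is in bookkeeping the constants and in justifying division by $\Delta^\star(w)$, which the nondegeneracy hypothesis provides.
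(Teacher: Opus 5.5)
Your proposal is correct and matches the paper's proof: you divide the bound of Theorem~\ref{thm:main_optimal_disc} by $\Delta^\star(w)$ and use $\lambda_2-\lambda_1\le\Delta^\star(w)/c_0$, which gives $C_{\kappa,C_w,c_0}=C_{\kappa,C_w}/c_0$, and in the native case you absorb $(r_{\max}/S)^2=\eta^2\le\delta_n$ into $\sqrt{\delta_n}$. Your additional checks, namely that $(\widehat p,\widehat q)$ is nonadjacent and that $\Delta^\star(w)>0$ when $\lambda_1<\lambda_2$, are sound and make explicit points the paper leaves implicit.
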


\subsection{Probabilistic near-optimal chord selection}\label{subsec:probabilistic_discrepancy}

The deterministic theory of
Section~\ref{subsec:deterministic_discrepancy} applies once the discrepancy
parameter $\delta_n$ is sufficiently small. We now show that this occurs
with high probability under a simple i.i.d.\ bounded model. This setting is
used only as a clean baseline; the same reduction from concentration to
near-optimality extends to more general independent conductances once
comparable discrepancy estimates are available.

Assume that
\begin{equation}\label{eq:iid_kappa_assumption}
	c_0,c_1,\dots,c_{n-1}\text{ are i.i.d.\ on }[1,\kappa].
\end{equation}
Equivalently, $r_i=1/c_i$ are i.i.d.\ and lie in $[\kappa^{-1},1]$
almost surely.

The high-probability statement should be read as a moderate-heterogeneity result. For each fixed conductance ratio $\kappa$, the normalized resistance arclength coordinates become uniformly distributed around the cycle with high probability as $n$ grows. The constants depend on $\kappa$; hence the theorem does not claim uniform performance when the conductance ratio grows without bound. The severe-heterogeneity cases in Section~\ref{sec:experiments} are therefore reported as stress tests of the screening heuristic rather than as direct consequences of the theorem.

\begin{theorem}[High-probability near-optimal chord selection]\label{thm:main_optimal_random}
	Assume Eqs.~\eqref{eq:iid_kappa_assumption} and \eqref{eq:w_native_scale}.
	On each realization, choose $\zeta$ such that
	\[
	r_{\max}\le \zeta\le S/8,
	\]
	define $\mathcal A_\zeta$ accordingly, and let
	$(\widehat p,\widehat q)\in\mathcal A_\zeta$ maximize
	$|\beta_1(p,q)|$ over $\mathcal A_\zeta$. There exists
	$C_{\kappa,C_w}>0$ such that, whenever
	\[
	C_\kappa\sqrt{\frac{x+\log n}{n}}\le \delta_0,
	\qquad
	n\ge n_0,
	\]
	one has, with probability at least $1-4e^{-x}$,
	\begin{equation}\label{eq:main_random_additive}
		0\le
		\max_{\{p,q\}\in\mathcal{E}_{\mathrm{ch}}}\Delta_{p,q}(w)-\Delta_{\widehat p,\widehat q}(w)
		\le
		C_{\kappa,C_w}\frac{w}{n}
		\left[
		\left(\frac{\zeta}{S}\right)^2+
		\left(\frac{x+\log n}{n}\right)^{1/4}
		\right].
	\end{equation}
	For the native choice $\zeta=r_{\max}$,
	\begin{equation}\label{eq:main_random_additive_native}
		0\le
		\max_{\{p,q\}\in\mathcal{E}_{\mathrm{ch}}}\Delta_{p,q}(w)-\Delta_{\widehat p,\widehat q}(w)
		\le
		C_{\kappa,C_w}\frac{w}{n}
		\left(\frac{x+\log n}{n}\right)^{1/4}
	\end{equation}
	with the same probability.
\end{theorem}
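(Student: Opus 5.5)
The plan is to reduce Theorem~\ref{thm:main_optimal_random} to the deterministic Theorem~\ref{thm:main_optimal_disc} by means of a concentration estimate for the discrepancy parameter $\delta_n=\Delta+\eta$. Assumption~\eqref{eq:det_kappa_bounds} holds almost surely under \eqref{eq:iid_kappa_assumption}, and \eqref{eq:w_native_scale} is assumed. So the only random ingredient is the event $\{\delta_n\le\delta_0\}$, and on that event we need a quantitative bound on $\sqrt{\delta_n}$. The target is an event $\mathcal G_x$ with $\Prob(\mathcal G_x)\ge 1-4e^{-x}$ on which
\[
\delta_n\le C_\kappa\sqrt{\frac{x+\log n}{n}} .
\]
On $\mathcal G_x$ the hypothesis $C_\kappa\sqrt{(x+\log n)/n}\le\delta_0$ gives $\delta_n\le\delta_0$. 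Applying \eqref{eq:main_optimal_disc_additive} and substituting $\sqrt{\delta_n}\le C_\kappa^{1/2}((x+\log n)/n)^{1/4}$ then yields \eqref{eq:main_random_additive}. The native version \eqref{eq:main_random_additive_native} follows by taking $\zeta=r_{\max}$ and using $(\zeta/S)^2\le\eta^2\le\delta_n\le\sqrt{\delta_n}$ (since $\delta_n\le 1$), exactly as in \eqref{eq:main_optimal_disc_additive_native}.

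The term $\eta$ is handled deterministically. Since $r_i\in[\kappa^{-1},1]$, we have $S\ge n/\kappa$ and hence $\eta=r_{\max}/S\le\kappa/n$, which is dominated by $\sqrt{\log n/n}$. The main step is the bound on $\Delta=D/S$. Let $\mu:=\E r_0$ and write
\[
A_{p,\ell}-\frac{\ell}{n}S=\sum_{j=0}^{\ell-1}(r_{p+j}-\mu)-\frac{\ell}{n}\sum_{i=0}^{n-1}(r_i-\mu).
\]
This gives $D\le 2\max_{p,\ell}\bigl|\sum_{j<\ell}(r_{p+j}-\mu)\bigr|$. Each window sum is a sum of at most $n$ independent centered variables bounded in $[-1,1]$, so Hoeffding's inequality gives
\[
\Prob\Bigl(\bigl|\textstyle\sum_{j<\ell}(r_{p+j}-\mu)\bigr|>t\Bigr)\le 2e^{-t^2/(2\ell)}\le 2e^{-t^2/(2n)} .
\]
A union bound over the at most $n^2$ pairs $(p,\ell)$, with $t=\sqrt{2n(x+2\log n)}$, yields failure probability at most $2e^{-x}$. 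Windows wrapping around the index $n$ are still sums of distinct i.i.d.\ terms, so Hoeffding's inequality applies to them without change.

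To convert this to a bound on $\Delta$ we need a lower bound on $S$. Either $S\ge n/\kappa$ deterministically, or a second Hoeffding bound gives $S\ge n\mu/2$ with probability at least $1-2e^{-x}$ once $n$ is large; this is one way to account for the factor $4e^{-x}$ in the statement, which I would keep for safety. Together these give $\Delta\le C_\kappa\sqrt{(x+\log n)/n}$, and the constants are absorbed into $C_\kappa$ and $C_{\kappa,C_w}$.

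I expect no serious obstacle; the step needing most care is the bookkeeping. The $n^2$ union bound must be absorbed into the $\log n$ term, the square root must propagate correctly into the exponent $1/4$, and $n\ge n_0$ must cover both the deterministic threshold and the requirement $n\ge 8\kappa$ for admissibility of $\zeta=r_{\max}$. If one wants a sharper constant, the wrap-around windows can instead be written as differences of prefix sums and controlled via Kolmogorov's or Doob's maximal inequality. That route saves the $\log n$, but it is not needed for the stated rate.
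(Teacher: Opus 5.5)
Your proposal is correct and follows the paper's route. On the Hoeffding-plus-union-bound event where $\delta_n\le C_\kappa\sqrt{(x+\log n)/n}$ (Theorems~\ref{thm:discrepancy_iid_kappa} and~\ref{thm:random_to_det_transfer}), you apply Theorem~\ref{thm:main_optimal_disc}, substitute the bound into $\sqrt{\delta_n}$, and treat the native case via $(r_{\max}/S)^2=\eta^2\le\delta_n\le\sqrt{\delta_n}$, exactly as the paper does; the only cosmetic difference is that you fold the fluctuation of $S$ into the $\ell=n$ window and use $S\ge n/\kappa$, so a single union bound with failure probability $2e^{-x}$ suffices, whereas the paper uses a second Hoeffding bound on $|S-n\mu|$ to reach $4e^{-x}$.
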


\section{Pareto Front Formulation}
\label{sec:pareto}

The preceding sections provide separate tools for the two chord objectives:
$\Delta_{p,q}(w)$ for convergence-rate improvement and
$\mathcal I_{p,q}(w)$ for coherence improvement. Since both objectives are
monotone in the chord conductance under a fixed location, Pareto-efficient
solutions under a budget $0\le w\le\hat w$ use the full budget. We therefore
write, for an admissible chord $e=\{p,q\}\in\mathcal E_{\mathrm{ch}}$,
\begin{equation}\label{eq:pareto_raw_objectives}
	\Delta_e:=\Delta_{p,q}(\hat w),
	\qquad
	\mathcal I_e:=\mathcal I_{p,q}(\hat w).
\end{equation}

For a finite candidate set $\mathcal Q\subseteq\mathcal E_{\mathrm{ch}}$, the
joint design problem is the two-objective maximization
\begin{equation}\label{eq:finite_pareto_problem}
	\max_{e\in\mathcal Q}\bigl(\mathcal I_e,\Delta_e\bigr),
\end{equation}
where $\mathcal I_e$ measures Kirchhoff-index reduction and $\Delta_e$
measures algebraic-connectivity gain. A chord $e_1\in\mathcal Q$ dominates
$e_2\in\mathcal Q$, written $e_1\succ_{\mathrm P}e_2$, if
\[
\mathcal I_{e_1}\ge \mathcal I_{e_2},
\qquad
\Delta_{e_1}\ge \Delta_{e_2},
\]
and at least one inequality is strict. The Pareto-efficient set is
\begin{equation}\label{eq:pareto_efficient_set}
	\mathcal E_{\mathrm P}(\mathcal Q)
	:=
	\{e\in\mathcal Q:\nexists e'\in\mathcal Q
	\text{ such that }e'\succ_{\mathrm P}e\}.
\end{equation}

For comparison across screening rules, the objectives are normalized by the
exhaustive single-objective optima
\begin{equation}\label{eq:global_pareto_normalizers}
	\Delta^\star:=\max_{e\in\mathcal E_{\mathrm{ch}}}\Delta_e,
	\qquad
	\mathcal I^\star:=\max_{e\in\mathcal E_{\mathrm{ch}}}\mathcal I_e,
\end{equation}
and, when both normalizers are nonzero,
\begin{equation}\label{eq:normalized_pareto_objectives}
	\bar\Delta_e:=\frac{\Delta_e}{\Delta^\star},
	\qquad
	\bar{\mathcal I}_e:=\frac{\mathcal I_e}{\mathcal I^\star}.
\end{equation}
The normalized front induced by $\mathcal Q$ is
\begin{equation}\label{eq:pareto_front_image}
	\mathcal F_{\mathrm P}(\mathcal Q)
	:=
	\{(\bar{\mathcal I}_e,\bar\Delta_e):
	e\in\mathcal E_{\mathrm P}(\mathcal Q)\}.
\end{equation}
The first coordinate is coherence improvement and the second coordinate is
convergence-rate improvement.

\paragraph{Efficient coherence scoring.}
The Kirchhoff-index objective can be evaluated without recomputing a
pseudoinverse for every chord. Let $G:=L^\dagger$, $M:=G^2$, and
$\boldsymbol b_e=\boldsymbol e_p-\boldsymbol e_q$. Define
\begin{equation}\label{eq:pareto_Re_Qe_def}
	R_e:=\boldsymbol b_e^\top G\boldsymbol b_e,
	\qquad
	Q_e:=\boldsymbol b_e^\top M\boldsymbol b_e .
\end{equation}
The rank-one update formula gives
\begin{equation}\label{eq:pareto_fast_kirchhoff}
	\mathcal I_e(\hat w)
	=
	\frac{\hat w\, n Q_e}{1+\hat w R_e}.
\end{equation}
Equivalently,
\begin{equation}\label{eq:pareto_fast_kirchhoff_spectral}
	\mathcal I_e(\hat w)
	=
	\frac{\hat w n\displaystyle\sum_{k=1}^{n-1}
		\frac{(u_{k,p}-u_{k,q})^2}{\lambda_k^2}}
	{1+\hat w\displaystyle\sum_{k=1}^{n-1}
		\frac{(u_{k,p}-u_{k,q})^2}{\lambda_k}}.
\end{equation}
Thus, after forming $G$ and $M$, each candidate chord is scored for
coherence using only the four endpoint entries of each matrix.

\paragraph{Front extraction and screening.}
For a finite evaluated set $\mathcal Q$, the Pareto set can be extracted by
sorting candidates in decreasing $\bar\Delta_e$ and, with ties broken by
decreasing $\bar{\mathcal I}_e$, retaining exactly the candidates that set a
new record in $\bar{\mathcal I}_e$. This scan costs
$O(|\mathcal Q|\log |\mathcal Q|)$ including the sorting step. Taking
$\mathcal Q=\mathcal E_{\mathrm{ch}}$ gives the exhaustive front. More
generally, if $\mathcal S\subseteq\mathcal E_{\mathrm{ch}}$ is a screened
candidate set, then $\mathcal F_{\mathrm P}(\mathcal S)$ is the
$\mathcal S$-screened Pareto front.
\begin{remark}[Full and screened Pareto fronts]
	\label{rem:screened_pareto_fronts}
	The screened-front construction is independent of the screening rule itself.
	In Section~\ref{sec:experiments}, it is instantiated using the RBAPS and
	AW-RBAPS candidate sets generated by Algorithm~\ref{alg:rbaps_awrbaps}.
\end{remark}

For visualization, we also report a representative knee chord on the
exhaustive front:
\begin{equation}\label{eq:pareto_knee_chord}
	e_{\mathrm{knee}}
	\in
	\arg\min_{e\in\mathcal E_{\mathrm P}(\mathcal E_{\mathrm{ch}})}
	\sqrt{(1-\bar{\mathcal I}_e)^2+(1-\bar\Delta_e)^2}.
\end{equation}
This scalar summary is not used to define Pareto efficiency; it only
identifies the front point closest to the ideal point $(1,1)$ in the
normalized objective plane.

\section{Experiments}\label{sec:experiments}

We evaluate the proposed resistance-balanced screening rules on random
weighted cycles. The numerical study has three goals. First, we test whether
RBAPS and AW-RBAPS recover chords that are nearly optimal for the
low-frequency algebraic-connectivity objective used in the implementation.
Second, we compare this behavior with the standard Fiedler-vector endpoint
heuristic, which is a strong spectral baseline. Third, we verify that
algebraic-connectivity gain and Kirchhoff-index reduction are related but
nonidentical objectives, and then evaluate AW-RBAPS as a screened
Pareto-front method. Throughout, the chord conductance uses the full budget
$\hat w=\max_{0\le i<n}c_i$, as justified by
Remark~\ref{remark:budget}.

For a weighted cycle with conductances $c_0,\ldots,c_{n-1}$, let
\[
\mathcal E_{\rm ch}
=
\bigl\{\{p,q\}: \min\{|p-q|,n-|p-q|\}\ge 2\bigr\}
\]
be the admissible chord set. In the single-objective screening experiments,
we use the same low-frequency rank-one update both to choose a chord within
each screened set and to define the exhaustive reference optimum. Let
$0=\lambda_0<\lambda_1\le\lambda_2\le\cdots$ be the eigenvalues of the
original weighted-cycle Laplacian, and let $\boldsymbol u_1,\ldots,
\boldsymbol u_m$ be the first $m$ nontrivial eigenvectors, with
$m=12$ in all reported experiments. For $e=\{p,q\}$, set
\[
a_{pq}
=
\bigl(u_{1,p}-u_{1,q},\ldots,u_{m,p}-u_{m,q}\bigr)^\top .
\]
The implemented low-frequency approximation of the algebraic-connectivity
gain is
\begin{equation}\label{eq:lf_delta_experiment}
	\widehat\Delta_m(p,q)
	=
	\lambda_{\min}\!\left(
	\operatorname{diag}(\lambda_1,\ldots,\lambda_m)
	+
	\hat w\,a_{pq}a_{pq}^{\top}
	\right)
	-
	\lambda_1 .
\end{equation}
The reported single-objective normalized gain is
\begin{equation}\label{eq:normalized_lf_gain_experiment}
	\widehat\vartheta(e)
	=
	\frac{\widehat\Delta_m(e)}
	{\max_{f\in\mathcal E_{\rm ch}}\widehat\Delta_m(f)}.
\end{equation}
Thus $\widehat\vartheta=1$ means that the selected chord is optimal under
the same low-frequency objective among all admissible chords. The later
objective-correlation and Pareto-front experiments use exact
algebraic-connectivity gains and exact Kirchhoff-index reductions, as stated
there.

\subsection{Resistance-balanced screening and baselines}

The screening rules use resistance arclength. Let $r_i=c_i^{-1}$, build
lifted prefix sums $\tilde s_0=0$ and
$\tilde s_{k+1}=\tilde s_k+r_{k\bmod n}$ for $k=0,\ldots,2n-1$, and set
$S=\tilde s_n$. For a lifted index $k$, define $q(k)=k\bmod n$. The
subroutine $\textsc{AddPair}(i,k,\mathcal P)$ inserts
$(\min\{i,q(k)\},\max\{i,q(k)\})$ into $\mathcal P$ whenever the two
vertices are nonadjacent on the cycle. Algorithm~\ref{alg:rbaps_awrbaps}
returns the RBAPS candidate set $\mathcal P_0$ when $\tau=0$, and the
AW-RBAPS candidate set $\mathcal P_\tau$ when $\tau>0$. Unless otherwise
specified, AW-RBAPS uses $\tau=0.1$.

\begin{algorithm}[!t]
	\caption{Resistance-balanced screening: RBAPS and AW-RBAPS}
	\label{alg:rbaps_awrbaps}
	\begin{algorithmic}[1]
		\Require Cycle conductances $\{c_i\}_{i=0}^{n-1}$ and tolerance $\tau\ge0$
		\Ensure Screened candidate set $\mathcal P$
		\State $r_i\gets c_i^{-1}$ for $i=0,\ldots,n-1$
		\State Build lifted prefix sums $\tilde s_0=0$,
		$\tilde s_{k+1}=\tilde s_k+r_{k\bmod n}$, $k=0,\ldots,2n-1$
		\State $S\gets\tilde s_n$, $\mathcal P\gets\emptyset$
		\For{$i=0$ to $n-1$}
		\State $t\gets\tilde s_i+S/2$
		\State $j\gets\min\{k\in\{i+1,\ldots,i+n\}:\tilde s_k\ge t\}$
		\ForAll{$k_0\in\{j-1,j,j+1\}\cap\{i+1,\ldots,i+n-1\}$}
		\State $\textsc{AddPair}(i,k_0,\mathcal P)$
		\If{$\tau>0$}
		\State expand left and right from $k_0$ while
		$|2(\tilde s_k-\tilde s_i)-S|\le \tau S$, inserting admissible pairs
		\EndIf
		\EndFor
		\EndFor
		\State \Return $\mathcal P$
	\end{algorithmic}
\end{algorithm}

We compare four strategies. Random selects one chord uniformly from
$\mathcal E_{\rm ch}$. The Fiedler baseline first takes the max--min
endpoint pair
\begin{equation}\label{eq:fiedler_rule_experiment}
	e_{\rm F}^{0}
	=
	\left\{
	\arg\min_i u_{1,i},\,
	\arg\max_i u_{1,i}
	\right\}.
\end{equation}
If $e_{\rm F}^{0}\in\mathcal E_{\rm ch}$, this chord is used. If the two
endpoints are adjacent, we use the admissible fallback
\[
e_{\rm F}\in
\arg\max_{\{p,q\}\in\mathcal E_{\rm ch}}
|u_{1,p}-u_{1,q}|^2 .
\]
This is a strong benchmark because the squared Fiedler-vector gap is the
first-order perturbation proxy for algebraic-connectivity improvement.

RBAPS and AW-RBAPS first generate $\mathcal P_0$ and $\mathcal P_\tau$,
respectively, and then select
\[
e_{\rm R}\in\arg\max_{e\in\mathcal P_0}\widehat\Delta_m(e),
\qquad
e_{\rm AW}\in\arg\max_{e\in\mathcal P_\tau}\widehat\Delta_m(e).
\]
Thus the comparison is between a single-mode Fiedler endpoint heuristic and
a resistance-screened, multi-mode low-frequency update.

\paragraph{Computational distinction.}
The resistance-screening stage is eigenvector-free: it uses only cycle-edge
resistances and prefix sums. The optional score
\eqref{eq:lf_delta_experiment} requires the first $m$ nontrivial
eigenvectors once, after which only the screened candidate set is scored.
The computational point is therefore not that Fiedler endpoint search is
intrinsically quadratic. After the Fiedler vector is known, an optimized
endpoint implementation can be found by scanning the largest and smallest
components. The advantage of RBAPS/AW-RBAPS is instead that they provide a
cycle-structured candidate set that can reduce subsequent low-frequency,
exact spectral, or Pareto objective evaluations from the full
$\Theta(n^2)$ chord set to a linear or near-linear set.

\subsection{Baseline comparison for algebraic-connectivity screening}
\label{subsec:algconn_experiments}

The default setting is $n=200$, independent conductances sampled uniformly
from $[1,100]$, and $\hat w=100$. We repeat the experiment over four
independent rounds, each with $1000$ weighted cycles.

\begin{figure}[t!]
	\centering
	\includegraphics[width=0.6\columnwidth]{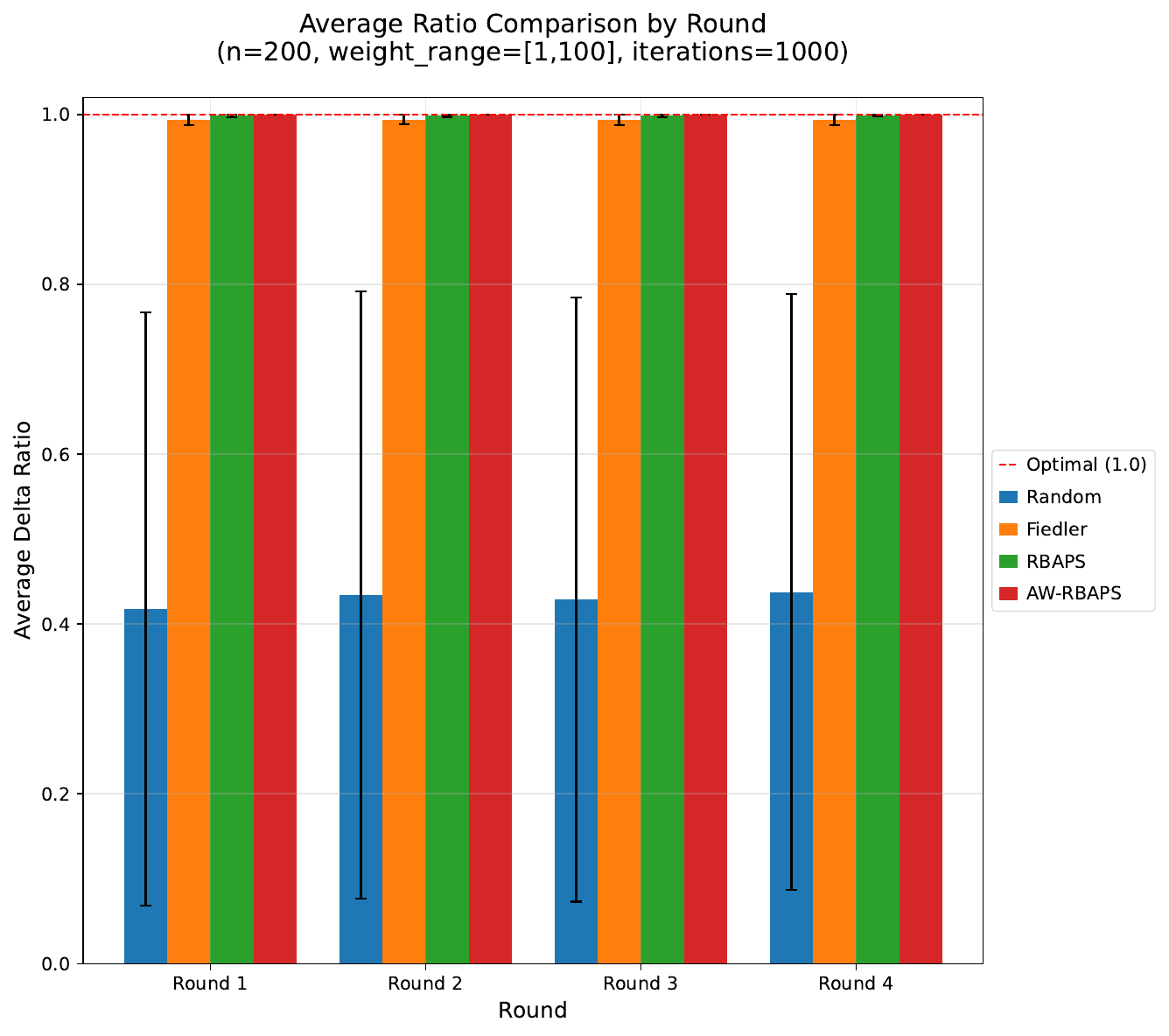}
	\caption{Average normalized low-frequency gain over four independent rounds
		in the default setting. Fiedler is already a strong baseline, while RBAPS and
		especially AW-RBAPS further reduce the remaining optimality gap.}
	\label{fig:baseline_strategy}
\end{figure}

Fig.~\ref{fig:baseline_strategy} shows that random chord insertion is far
from optimal and highly variable, while all informed strategies are close to
the exhaustive low-frequency optimum. Averaged over all $4000$ trials, the
mean normalized gains are approximately $0.4296$ for Random, $0.9939$
for the Fiedler heuristic, $0.9986$ for RBAPS, and $0.9998$ for
AW-RBAPS. These values show that the Fiedler endpoint rule is not a weak
baseline: it already captures most of the benefit predicted by first-order
spectral perturbation. The proposed resistance-balanced rules improve on
this strong baseline by shrinking the residual optimality gap. Relative to
Fiedler, RBAPS reduces this gap by roughly three quarters, and AW-RBAPS
reduces it by more than an order of magnitude.

\begin{figure*}[htbp]
	\centering
	\begin{minipage}{0.48\textwidth}
		\centering
		\includegraphics[width=\linewidth]{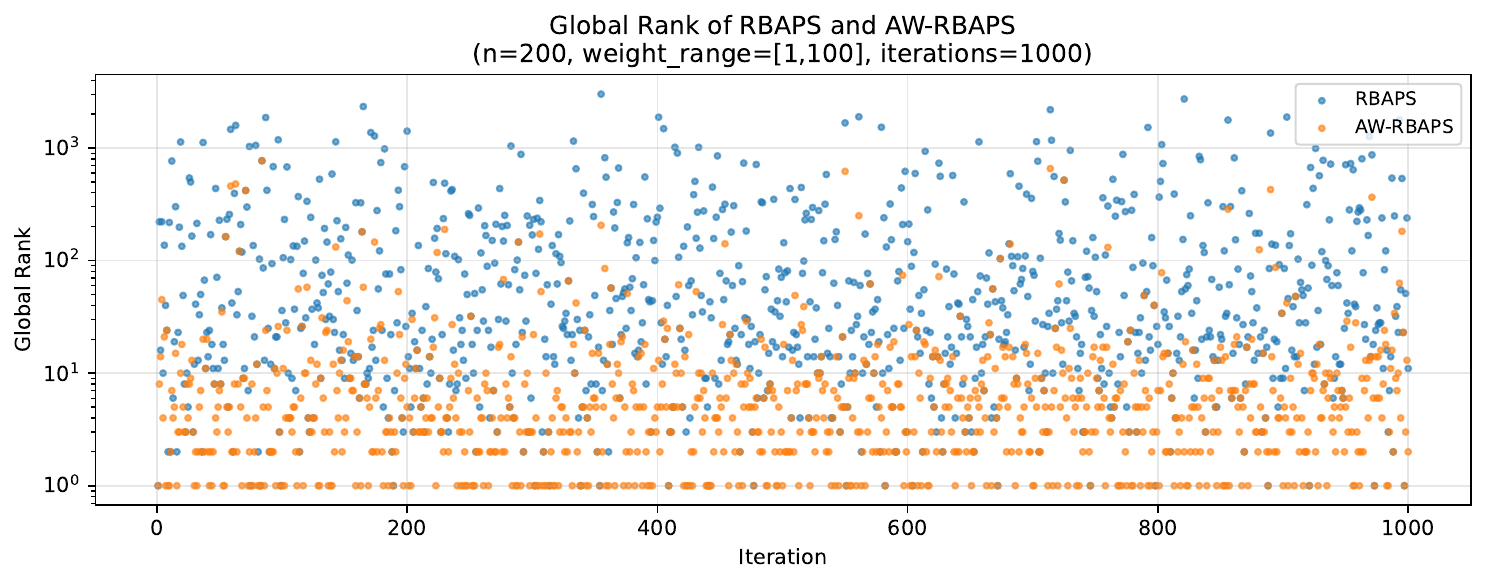}
	\end{minipage}\hfill
	\begin{minipage}{0.48\textwidth}
		\centering
		\includegraphics[width=\linewidth]{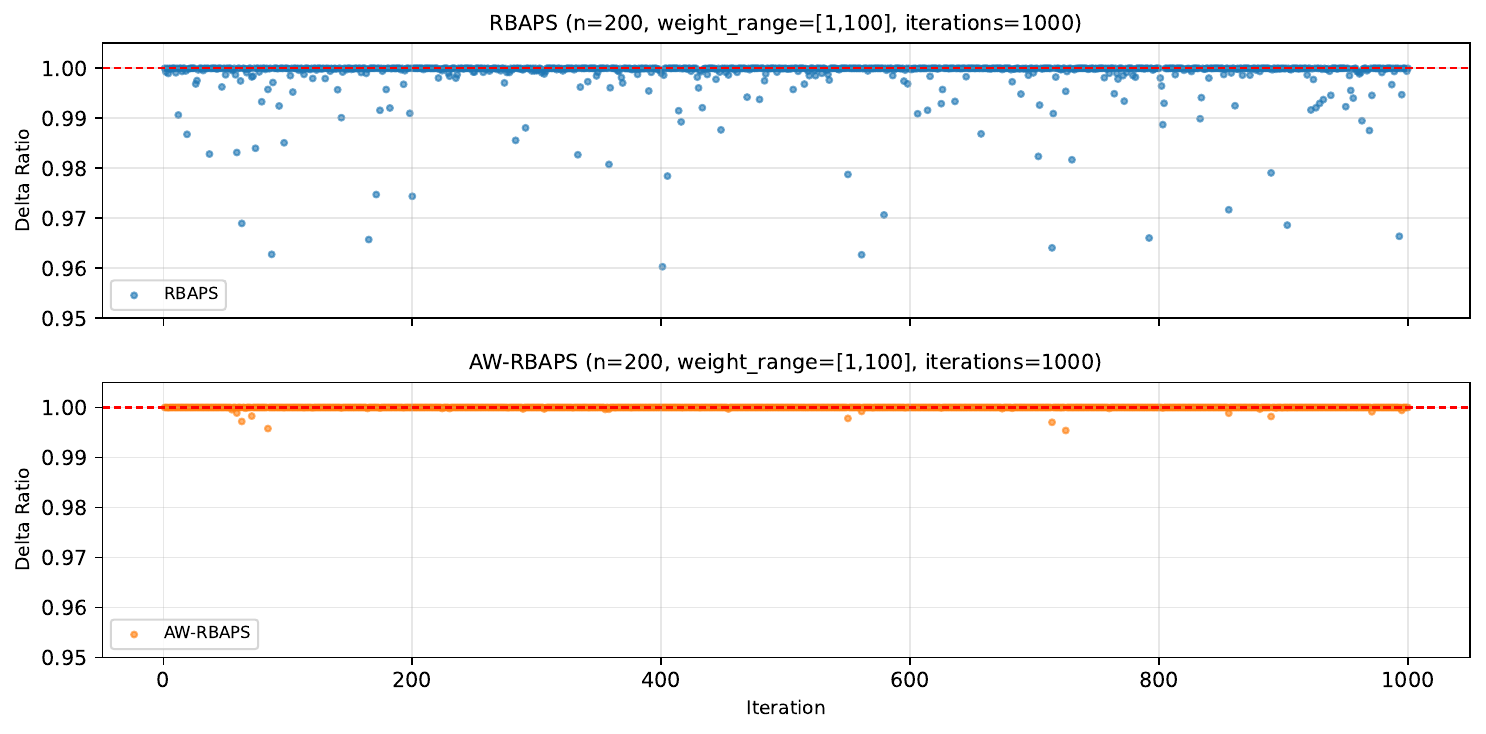}
	\end{minipage}
	\caption{Left: global rank, under the exhaustive low-frequency objective, of
		the best edge screened by RBAPS and AW-RBAPS. Right: corresponding normalized
		low-frequency gains. AW-RBAPS is more concentrated near the global optimum,
		while both resistance-balanced methods remain close to optimal in objective
		value.}
	\label{fig:rank_ratio}
\end{figure*}

Fig.~\ref{fig:rank_ratio} gives a stricter diagnostic than average gain.
RBAPS can select screened edges with less favorable exhaustive rank, whereas
AW-RBAPS is more concentrated near the best augmentation. The right panel
explains why both methods still have high normalized gain: many near-antipodal
chords are almost tied under $\widehat\Delta_m$. This supports the
cycle-specific intuition that resistance arclength identifies the relevant
high-quality region, and that the adaptive window stabilizes the final
selection within that region.

\subsection{Effect of conductance heterogeneity and graph size}

We next test robustness. First, with $n=200$, the conductance range is
varied from $[1,100]$ to $[1,10^8]$, with
$\hat w$ set to the upper endpoint of the range. Second, the conductance
range is fixed to $[1,100]$, and $n$ is varied from $200$ to $800$.
Each setting uses $1000$ independent instances.

\begin{table*}[!t]
	\centering
	\caption{Average normalized low-frequency gain $\widehat\vartheta$ under
		different conductance ranges ($n=200$, $1000$ trials per setting). Each
		entry is mean $\pm$ standard deviation.}
	\label{tab:weight_range}
	\renewcommand{\arraystretch}{1.05}
	\begin{tabular}{lcccc}
		\hline
		Conductance range & Random & Fiedler & RBAPS & AW-RBAPS \\
		\hline
		$[1,100]$      & $0.4356 \pm 0.3478$ & $0.9939 \pm 0.0085$ & $0.9986 \pm 0.0063$ & $\mathbf{0.9999 \pm 0.0010}$ \\
		$[1,10^4]$     & $0.3747 \pm 0.3466$ & $0.9778 \pm 0.0478$ & $0.9796 \pm 0.0648$ & $\mathbf{0.9968 \pm 0.0206}$ \\
		$[1,10^6]$     & $0.3651 \pm 0.3463$ & $0.9734 \pm 0.0581$ & $0.9798 \pm 0.0650$ & $\mathbf{0.9970 \pm 0.0163}$ \\
		$[1,10^8]$     & $0.3766 \pm 0.3459$ & $0.9718 \pm 0.0601$ & $0.9726 \pm 0.0821$ & $\mathbf{0.9947 \pm 0.0289}$ \\
		\hline
	\end{tabular}
\end{table*}

\begin{table*}[!t]
	\centering
	\caption{Average normalized low-frequency gain $\widehat\vartheta$ under
		different numbers of vertices (conductance range $[1,100]$, $1000$
		trials per setting). Each entry is mean $\pm$ standard deviation. Entries
		displayed as $1.0000\pm0.0000$ are rounded to four decimal places.}
	\label{tab:n_vertices}
	\renewcommand{\arraystretch}{1.05}
	\begin{tabular}{lcccc}
		\hline
		Number of vertices & Random & Fiedler & RBAPS & AW-RBAPS \\
		\hline
		$200$ & $0.4332 \pm 0.3562$ & $0.9938 \pm 0.0086$ & $0.9983 \pm 0.0068$ & $\mathbf{0.9999 \pm 0.0008}$ \\
		$400$ & $0.4476 \pm 0.3559$ & $0.9966 \pm 0.0049$ & $0.9996 \pm 0.0015$ & $\mathbf{1.0000 \pm 0.0000}$ \\
		$600$ & $0.4719 \pm 0.3506$ & $0.9975 \pm 0.0034$ & $0.9998 \pm 0.0008$ & $\mathbf{1.0000 \pm 0.0000}$ \\
		$800$ & $0.4429 \pm 0.3473$ & $0.9981 \pm 0.0024$ & $0.9999 \pm 0.0005$ & $\mathbf{1.0000 \pm 0.0000}$ \\
		\hline
	\end{tabular}
\end{table*}

\begin{figure*}[!t]
	\centering
	\begin{minipage}{0.48\textwidth}
		\centering
		\includegraphics[width=\linewidth]{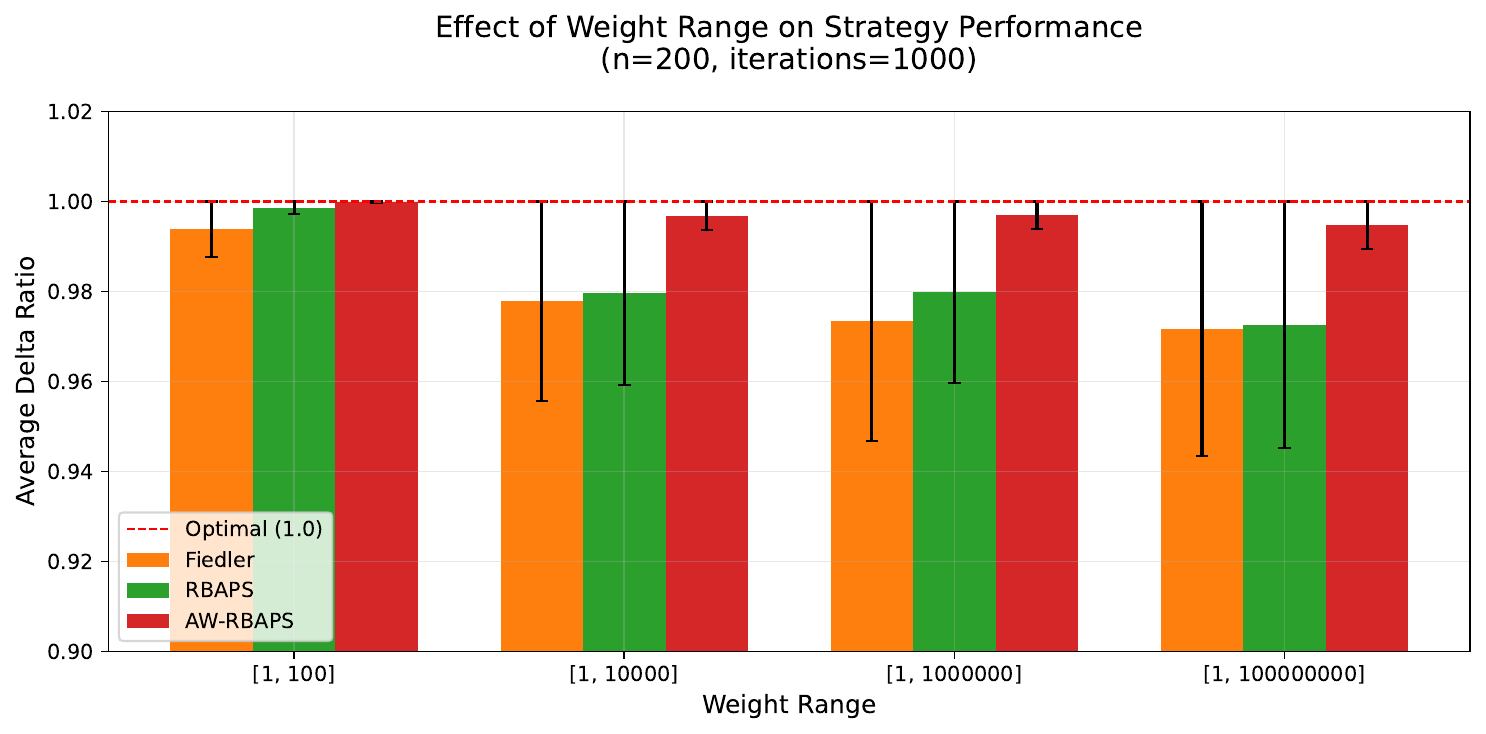}
	\end{minipage}\hfill
	\begin{minipage}{0.48\textwidth}
		\centering
		\includegraphics[width=\linewidth]{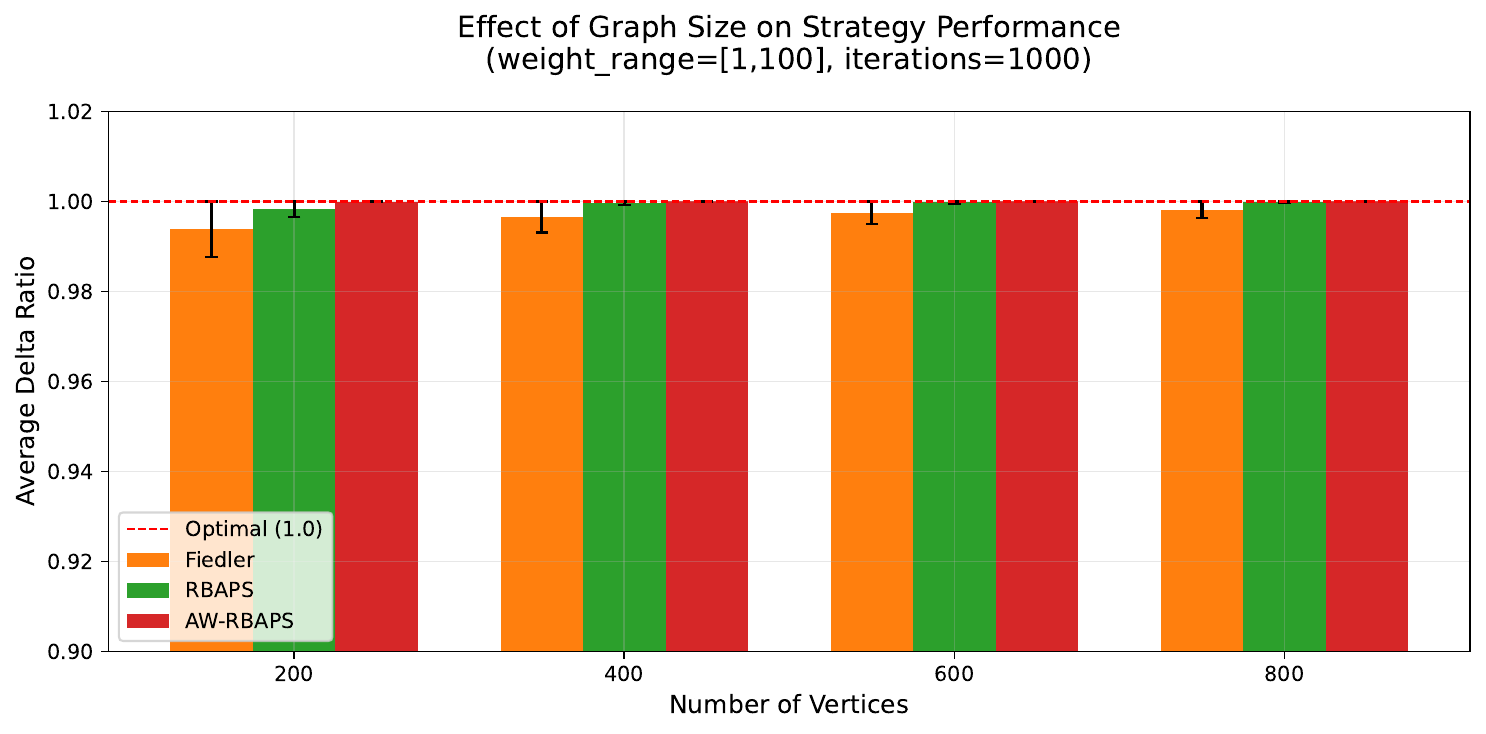}
	\end{minipage}
	\caption{Effect of conductance heterogeneity and graph size. Left: increasing
		heterogeneity degrades informed methods, but AW-RBAPS remains closest to the
		exhaustive low-frequency optimum. Right: under the moderate range
		$[1,100]$, RBAPS and AW-RBAPS improve with graph size.}
	\label{fig:expansion_visual}
\end{figure*}

Tables~\ref{tab:weight_range}--\ref{tab:n_vertices} and
Fig.~\ref{fig:expansion_visual} support two main conclusions. First,
AW-RBAPS is the most robust informed method under large conductance
heterogeneity. Even for $[1,10^8]$, its average normalized gain remains
above $0.994$, whereas the Fiedler baseline decreases to about $0.972$.
This is the clearest numerical advantage of the adaptive resistance window:
when the weighted cycle is highly nonuniform, retaining a local family of
near-resistance-balanced alternatives is more stable than relying on a
single Fiedler endpoint pair. Second, for fixed moderate heterogeneity
$[1,100]$, the resistance-balanced methods improve with $n$. AW-RBAPS is
essentially optimal at the reported precision for $n=400,600,800$, and
RBAPS also approaches the exhaustive low-frequency optimum. This trend is
consistent with the high-probability discrepancy scaling in
Theorem~\ref{thm:discrepancy_iid_kappa}; the extreme $[1,10^8]$ cases are
best interpreted as stress tests beyond the moderate-heterogeneity regime.

\subsection{Algebraic-connectivity gain versus Kirchhoff-index reduction}
\label{subsec:objective_relation}

The previous experiments focus on the low-frequency algebraic-connectivity
screening objective. We now compare the exact algebraic-connectivity gain
with the exact Kirchhoff-index reduction. For each weighted cycle in the
default setting, $\Delta_e$ is computed from the eigenvalue change
$\lambda_1(L_e(\hat w))-\lambda_1(L)$, while $\mathcal I_e$ is evaluated
using the exact rank-one formula from Section~\ref{sec:resistance}.

\begin{figure}[!t]
	\centering
	\includegraphics[width=0.6\columnwidth]{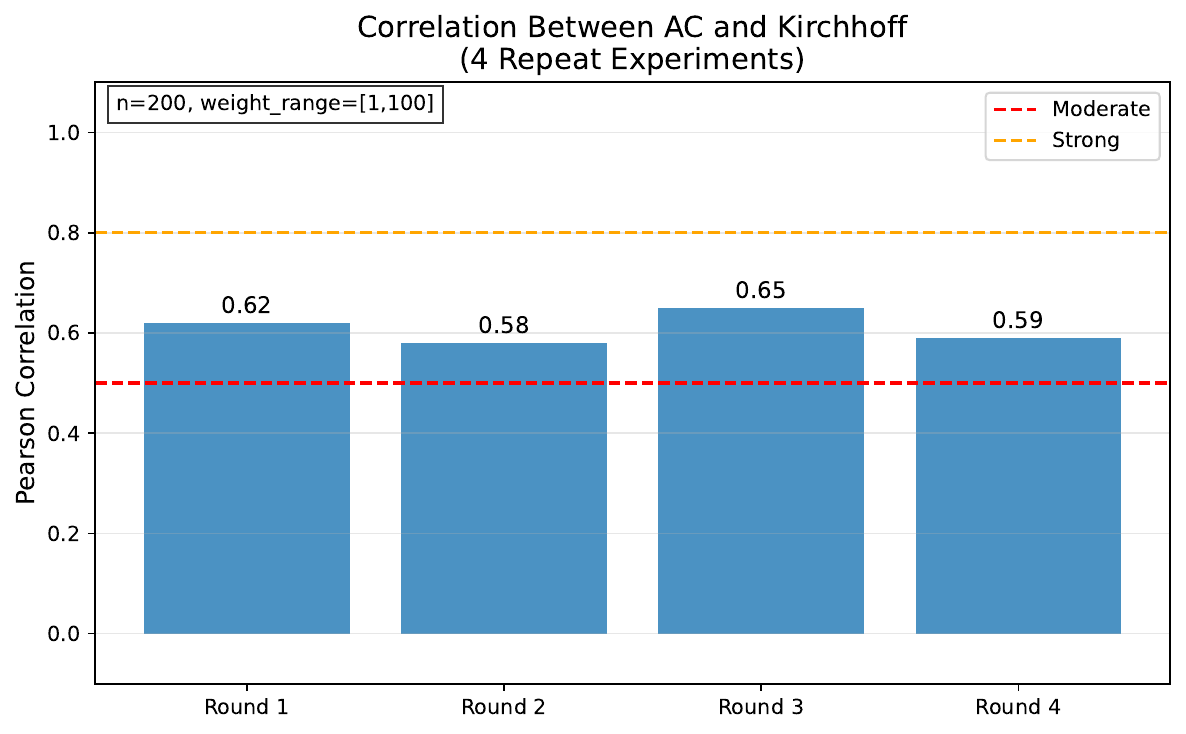}
	\caption{Correlation between exact algebraic-connectivity gain and exact
		Kirchhoff-index reduction over four repeated experiments in the default
		setting.}
	\label{fig:ac_kirchhoff_corr}
\end{figure}

Fig.~\ref{fig:ac_kirchhoff_corr} shows that the two objectives are positively
correlated but not equivalent. Across the four repeated experiments, the
Pearson correlation lies between $0.58$ and $0.65$, indicating only
moderate alignment. In particular, the chord that maximizes
algebraic-connectivity gain does not generally coincide with the chord that
maximizes Kirchhoff-index reduction. Hence the two criteria capture related
but distinct aspects of network enhancement, which motivates the finite
Pareto-front formulation in Section~\ref{sec:pareto}.

\subsection{Pareto-front experiment}
\label{subsec:pareto_experiment}

The Pareto experiment uses the same graph-generation model and chord budget
as the default setting, but evaluates both exact objectives for each
candidate chord. For a generated weighted cycle, define
\[
\mathcal Q_{\rm full}:=\mathcal E_{\rm ch},
\qquad
\mathcal Q_{\rm R}:=\mathcal P_0,
\qquad
\mathcal Q_{\rm AW}:=\mathcal P_\tau .
\]
By Remark~\ref{rem:screened_pareto_fronts},
$\mathcal F_{\rm P}(\mathcal Q_{\rm R})$ and
$\mathcal F_{\rm P}(\mathcal Q_{\rm AW})$ are the RBAPS and AW-RBAPS
screened Pareto fronts. Since AW-RBAPS is consistently strongest in the
single-objective tests, we report its Pareto-front performance.

For each $e\in\mathcal Q_{\rm full}$, the algebraic-connectivity gain
$\Delta_e$ is computed from $\lambda_1(L_e(\hat w))$, and the
Kirchhoff-index reduction $\mathcal I_e$ is computed using
\eqref{eq:pareto_fast_kirchhoff}. The exhaustive front
$\mathcal F_{\rm P}(\mathcal Q_{\rm full})$ serves as the reference, and
all fronts are compared in the normalized objective plane
\eqref{eq:normalized_pareto_objectives}.

We use three diagnostics. The exact screened-front coverage is
\begin{equation}\label{eq:pareto_coverage_metric}
	\operatorname{cov}(\mathcal Q)
	:=
	\frac{|\mathcal E_{\rm P}(\mathcal Q_{\rm full})\cap\mathcal Q|}
	{|\mathcal E_{\rm P}(\mathcal Q_{\rm full})|}.
\end{equation}
The additive $\epsilon$-dominance error is
\begin{equation}\label{eq:pareto_epsilon_metric}
	\epsilon_+(\mathcal Q)
	:=
	\max_{e\in\mathcal E_{\rm P}(\mathcal Q_{\rm full})}
	\min_{e'\in\mathcal E_{\rm P}(\mathcal Q)}
	\max\{\bar{\mathcal I}_e-\bar{\mathcal I}_{e'},\,
	\bar\Delta_e-\bar\Delta_{e'},\,0\}.
\end{equation}
Finally, with reference point $(0,0)$, let $\operatorname{HV}(\mathcal Q)$
be the dominated hypervolume of $\mathcal F_{\rm P}(\mathcal Q)$, and
report
\begin{equation}\label{eq:pareto_hv_metric}
	h_{\rm HV}(\mathcal Q)
	:=
	\frac{\operatorname{HV}(\mathcal Q)}
	{\operatorname{HV}(\mathcal Q_{\rm full})} .
\end{equation}
Larger coverage and hypervolume ratio, and smaller $\epsilon_+$, indicate
a better screened approximation of the exhaustive front.

We first report one representative instance with $n=200$, conductances
sampled from $[1,100]$, $\hat w=100$, $\tau=0.1$, and random seed
$2026$. The exhaustive search contains $19700$ admissible chords, while
AW-RBAPS generates $2030$ candidates, about $10.3\%$ of the exhaustive
set.

\begin{figure*}[!t]
	\centering
	\includegraphics[width=.92\textwidth]{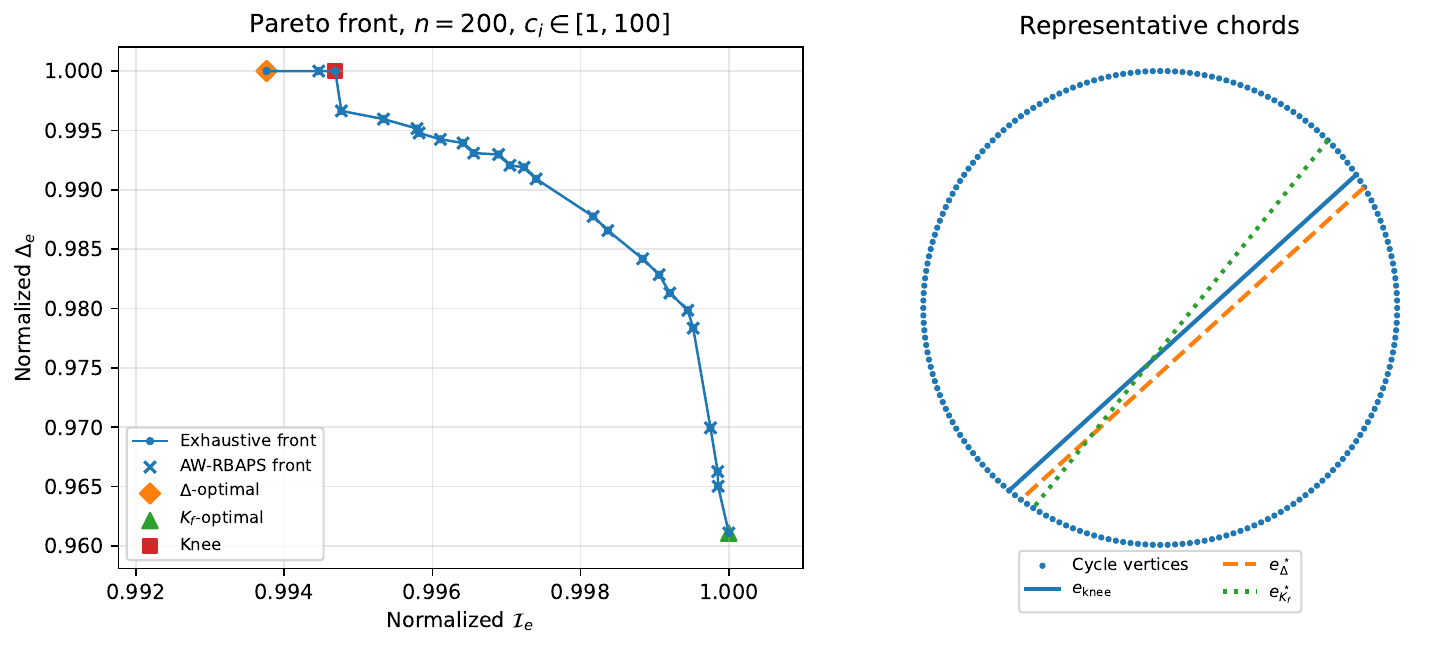}
	\caption{Pareto-front example for $n=200$ and conductance range
		$[1,100]$. Left: exhaustive and AW-RBAPS screened fronts in the normalized
		objective plane. The AW-RBAPS front coincides with the exhaustive front in
		this instance. Right: representative chords for the knee point, the
		algebraic-connectivity optimum, and the Kirchhoff-index optimum.}
	\label{fig:pareto_fronts_aw}
\end{figure*}

In addition, we perform a $100$-run Monte Carlo evaluation under the same
setting. In every run, $|\mathcal E_{\rm ch}|=19700$, while AW-RBAPS
evaluates only about one tenth of all admissible chords. The candidate-set
ratio has mean $0.1012$, standard deviation $0.0078$, median $0.1012$,
and range $[0.0771,0.1228]$, corresponding to about $1993$ candidates on
average.

\begin{figure*}[!t]
	\centering
	\includegraphics[width=1\textwidth]{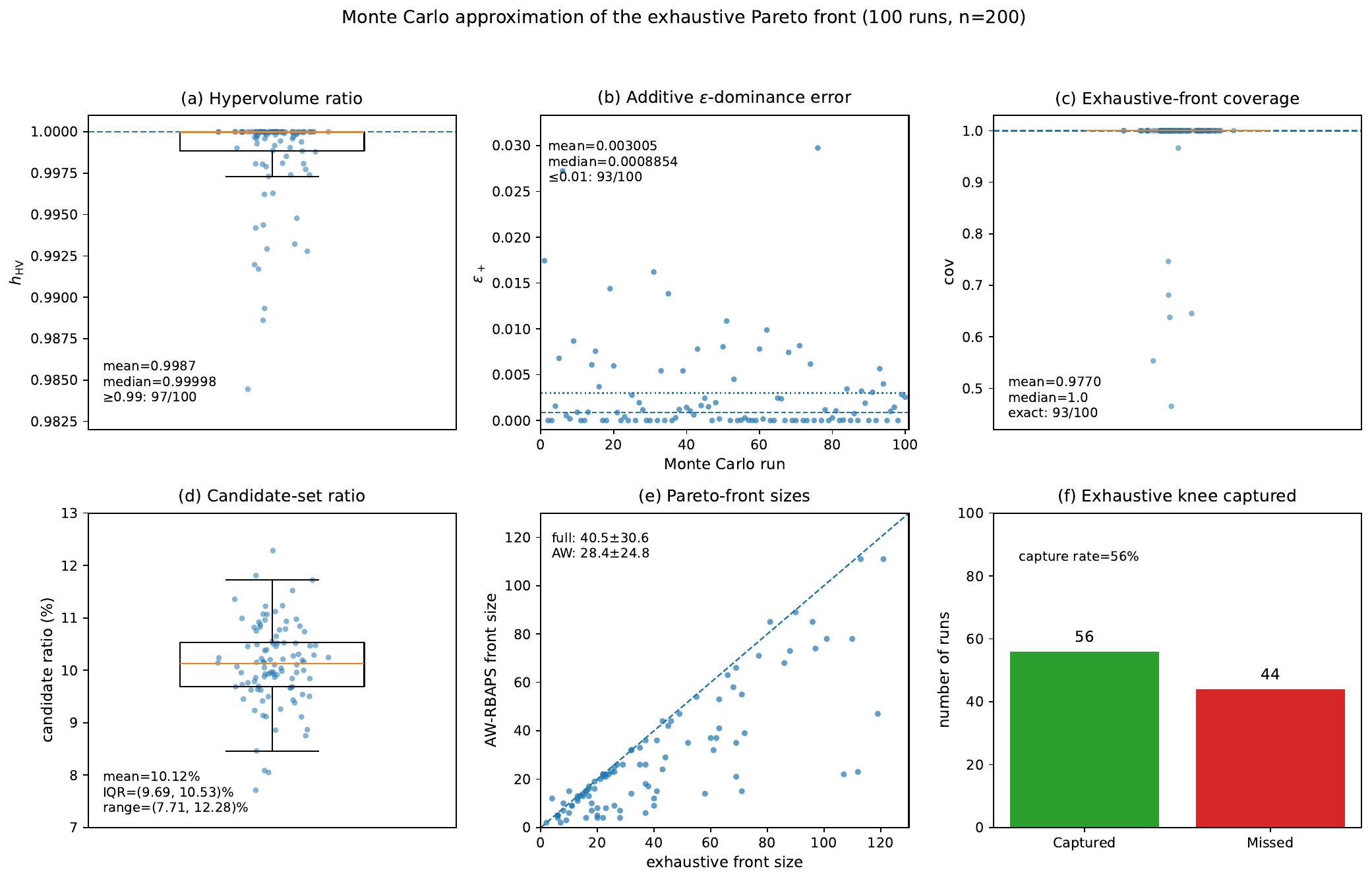}
	\caption{Monte Carlo Pareto-front approximation performance of AW-RBAPS over
		$100$ independent $n=200$ weighted-cycle instances. AW-RBAPS uses about
		$10.1\%$ of all admissible chords on average while preserving an almost
		identical dominated hypervolume in most runs.}
	\label{fig:pareto_metrics_aw}
\end{figure*}

Fig.~\ref{fig:pareto_fronts_aw} illustrates a typical front-level outcome:
the algebraic-connectivity optimum, Kirchhoff-index optimum, and knee chord
need not coincide, but the AW-RBAPS screened set retains the high-quality
trade-off region. The Monte Carlo statistics in
Fig.~\ref{fig:pareto_metrics_aw} confirm that this behavior is stable. The
mean hypervolume ratio is $0.9987$, the median is $0.99998$, and $97$
out of $100$ runs have $h_{\rm HV}\ge0.99$. The additive
$\epsilon$-dominance error is also small: its mean is
$3.00\times10^{-3}$, its median is $8.85\times10^{-4}$, and $93$ out
of $100$ runs have $\epsilon_+\le10^{-2}$. These metrics show that the
screened front is almost indistinguishable from the exhaustive front in
hypervolume and additive-dominance terms.

Exact coverage is stricter because it counts only exhaustive Pareto points
that are literally retained in the screened candidate set. Even under this
criterion, AW-RBAPS achieves exact coverage in $93$ out of $100$ runs,
with mean coverage $0.9770$ and median coverage $1$. The minimum coverage
is $0.4653$, but this worst case still has $h_{\rm HV}=0.9979$, showing
that missed exhaustive-front points are often replaced by nearby screened
alternatives. The exhaustive front contains $40.48\pm30.62$ points on
average, whereas the AW-RBAPS screened front contains $28.36\pm24.85$
points on average. Finally, the exact knee chord is captured in $56$ out
of $100$ runs. This single-point recovery criterion is more sensitive than
front-level metrics, so missing the exact knee has little effect when nearby
Pareto-efficient alternatives are retained.

Overall, the experiments show that AW-RBAPS is not merely a favorable
single-instance heuristic. It consistently preserves the high-quality portion
of the exhaustive Pareto front while evaluating only a near-linear screened
candidate set.

\section{Conclusion}\label{sec:conclusion}

This paper studied one-chord augmentation of weighted cycles for noisy first-order consensus. The main message is that a chord in a weighted cycle should be understood through the complementary resistance arcs that it creates. This geometry complements graph-generic edge-addition viewpoints: the same resistance split enters the exact coherence update and the low-frequency spectral mechanism that governs algebraic-connectivity improvement.

The first part of the analysis derived exact effective-resistance and Kirchhoff-index formulas after adding a chord, which gives a closed-form objective for reducing first-order network coherence. The second part used a cycle-adapted secular equation and a two-mode comparison theorem to show that near-antipodal resistance-balanced chords are near-optimal for algebraic-connectivity gain under bounded conductances and small discrepancy, with a high-probability extension for an i.i.d. bounded-conductance model. The third part converted these insights into the RBAPS and AW-RBAPS screening rules and into a finite Pareto-front formulation for the joint convergence-rate/coherence trade-off.

The experiments support the proposed design principle. The Fiedler endpoint rule is already a strong graph-generic baseline, but RBAPS and especially AW-RBAPS reduce the remaining gap to the exhaustive low-frequency optimum. For exact objectives, algebraic-connectivity gain and Kirchhoff-index reduction are positively correlated but not equivalent, which justifies the Pareto formulation. In the reported $n=200$ Monte Carlo Pareto tests, AW-RBAPS evaluates about $10.1\%$ of all admissible chords while attaining mean hypervolume ratio $0.9987$ relative to the exhaustive front. The severe-heterogeneity experiments further suggest that the resistance-balanced candidate set can remain useful beyond the current proof regime.

Future work will extend the framework to multiple chords, directed or switching communication cycles, and UAV formation models in which topology augmentation is coupled with motion constraints, communication range, and energy-aware scheduling.

\section*{Acknowledgement(s)}
The authors would like to express their sincere gratitude to all the referees for their careful reading and insightful suggestions.

\appendix
\section{Auxiliary results for Section~\ref{sec:algconn}}
\label{app:auxiliary-results}
This appendix contains the deterministic discrepancy and probabilistic concentration lemmas used to prove the near-antipodal resistance-balance theorems in Section~\ref{sec:algconn}.

\subsection{Quadrature and continuous embedding}

\begin{lemma}[Periodic quadrature under node discrepancy]\label{lem:quadrature_discrepancy}
	If $g\in C^1([0,1])$ and $g(0)=g(1)$, then
	\[
	\left|
	\frac1n\sum_{i=0}^{n-1}g(y_i)-\int_0^1g(t)\,\mathrm dt
	\right|
	\le
	\left(\Delta+\frac1n\right)\int_0^1|g'(t)|\,\mathrm dt.
	\]
\end{lemma}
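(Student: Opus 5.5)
The plan is to compare each sample $g(y_i)$ with the average of $g$ over the uniform cell $I_i:=[i/n,(i+1)/n]$, and then to control the mismatch by the total variation of $g$. First write
\[
\int_0^1 g(t)\,\mathrm dt=\sum_{i=0}^{n-1}\int_{I_i}g(t)\,\mathrm dt,
\]
so that the error becomes
\[
E:=\sum_{i=0}^{n-1}\int_{I_i}\bigl(g(y_i)-g(t)\bigr)\,\mathrm dt .
\]
For $t\in I_i$ the fundamental theorem of calculus gives $|g(y_i)-g(t)|\le\int_{J_i(t)}|g'|$, where $J_i(t)$ is the interval between $y_i$ and $t$. By the node discrepancy bound \eqref{eq:node_discrepancy}, $|y_i-i/n|\le\Delta$, and since $t\in I_i$, the interval $J_i(t)$ is contained in the window $W_i:=[i/n-\Delta,\,(i+1)/n+\Delta]$. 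Note that $y_i\in[0,1)$ because $s_i<S$ for $i\le n-1$.

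A naive bound $|E|\le\sum_i\frac1n\int_{W_i}|g'|$ would then require counting overlaps. Each point of the circle lies in at most $n(2\Delta+1/n)+1$ of the windows, which gives a bound of the form $(2\Delta+2/n)\int|g'|$. That has the right shape but loses a factor of $2$. Here periodicity matters: we extend $g$ $1$-periodically, which is continuous since $g(0)=g(1)$ and piecewise $C^1$. Windows that stick out of $[0,1]$ then wrap around with $\int|g'|$ unchanged, so the total-variation accounting is done on the circle.

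To get the sharp constant $\Delta+1/n$, I would refine the overlap argument using Fubini. Write
\[
|E|\le\int_{\mathbb R/\mathbb Z}|g'(\tau)|\,N(\tau)\,\mathrm d\tau,
\qquad
N(\tau):=\sum_i\bigl|\{t\in I_i:\tau\in J_i(t)\}\bigr|.
\]
It then suffices to show $N(\tau)\le\Delta+1/n$ for every $\tau$. For fixed $\tau$, the cells $i$ that contribute have $y_i$ and $I_i$ on opposite sides of $\tau$, or have $\tau\in I_i$. The contribution from the cell containing $\tau$ is at most $1/n$. For the remaining cells, the set of $t\in I_i$ on the far side of $\tau$ from $y_i$ has length at most $\mathrm{dist}(\tau,\cdot)$. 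Summing over $i$, using monotonicity of $y_i$ in $i$ and $|y_i-i/n|\le\Delta$, these lengths tile a subset of an interval of length $\Delta$ beyond $\tau$. This yields $N(\tau)\le\Delta+1/n$.

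The main obstacle is this counting step: it needs the monotone ordering $0=y_0<y_1<\dots<y_n=1$, so that the crossing cells form a contiguous block whose $I_i$ lie within distance $\Delta$ of $\tau$ and are disjoint. If the sharp constant resists, I would fall back on the cruder $2\Delta+2/n$ bound, which is enough for all later uses up to constants.
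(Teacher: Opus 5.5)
Your strategy (cellwise comparison, fundamental theorem of calculus, Fubini) is sound and does give the sharp constant. However, the counting step as you sketch it rests on a false intermediate claim. It is not true that the cell containing $\tau$ contributes at most $1/n$ while the other crossing cells lie in an interval of length $\Delta$ beyond $\tau$. Take $n=10$, $y_i=i/10$ except $y_4=0.36$ (so $\Delta=0.04$), and $\tau=0.38\in I_3$. Then $y_4<\tau$ and all of $I_4$ lies beyond $\tau$, so $I_4$ alone contributes $1/n=0.1>\Delta$.

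What is true is the bound on the total, and it is best computed exactly. Let $k:=\#\{0\le i\le n-1:y_i<\tau\}$. By strict monotonicity, $y_i<\tau$ if and only if $i\le k-1$, and $y_{k-1}<\tau\le y_k$ (with $y_n=1$). The cells $i\ge k$ contribute $|I_i\cap[0,\tau]|$, and these sum to $(\tau-k/n)_+$. The cells $i\le k-1$ contribute $|I_i\cap[\tau,1]|$, and these sum to $(k/n-\tau)_+$. Hence $N(\tau)=|\tau-k/n|$. Since $\tau-k/n\le y_k-k/n\le\Delta$ and $k/n-\tau<k/n-y_{k-1}\le\Delta+1/n$, you get $N(\tau)\le\Delta+1/n$. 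In the example above, $N(0.38)=0.12\le 0.14$.

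With this repair your proof is the paper's proof in unsigned form. The paper introduces the empirical distribution function $F_n$ of $\{y_i\}$ and $R_n(t)=F_n(t)-t$. It integrates by parts against $\nu_n-\lambda$ to get the signed identity $\frac1n\sum_i g(y_i)-\int_0^1 g=-\int_0^1 R_n g'$. It then bounds $\|R_n\|_\infty\le\Delta+1/n$ by the same observation $y_{k-1}\le t<y_k$. Your signed kernel is exactly $-R_n$, and your $N$ is $|R_n|$. Because the nodes are ordered, at each $\tau$ all crossings have the same orientation, so taking absolute values cell by cell loses nothing. The paper's route is shorter (it is Koksma's inequality); yours is more hands-on and makes the role of monotonicity explicit.

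Two side remarks. First, the periodic extension and wrap-around are unnecessary: $y_i\in[0,1)$ and $t\in I_i\subset[0,1]$ force $J_i(t)\subset[0,1]$. Indeed, neither argument uses $g(0)=g(1)$, only that both measures have unit mass. Second, your crude $2\Delta+2/n$ fallback would suffice for every later use, since there constants are absorbed into $C_\kappa$.
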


\begin{proof}
	Let $\nu_n$ be the empirical measure of $\{y_i\}_{i=0}^{n-1}$, and
	let
	\[
	F_n(t):=\nu_n([0,t]),
	\qquad
	R_n(t):=F_n(t)-t.
	\]
	The discrepancy bound \eqref{eq:node_discrepancy} implies
	\[
	\|R_n\|_\infty\le \Delta+\frac1n.
	\]
	Indeed, if $k=\#\{0\le i\le n-1:y_i\le t\}$, then
	$F_n(t)=k/n$, and the mesh discrepancy between $y_k$ and $k/n$
	yields the estimate.
	
	Now
	\[
	\frac1n\sum_{i=0}^{n-1}g(y_i)-\int_0^1g(t)\,\mathrm dt
	=
	\int_{[0,1]}g(t)\,d(\nu_n-\lambda)(t),
	\]
	where $\lambda$ is Lebesgue measure. Since
	$(\nu_n-\lambda)([0,1])=0$, subtracting the constant $g(0)$ and
	integrating by parts gives
	\[
	\int_{[0,1]}g(t)\,d(\nu_n-\lambda)(t)
	=
	-\int_0^1 R_n(s)g'(s)\,ds.
	\]
	Hence
	\[
	\left|
	\frac1n\sum_{i=0}^{n-1}g(y_i)-\int_0^1g(t)\,\mathrm dt
	\right|
	\le
	\|R_n\|_\infty\int_0^1|g'(s)|\,ds,
	\]
	and the claim follows.
\end{proof}

\begin{lemma}[Energy and mass equivalence]\label{lem:continuous_embed_disc}
	Let $x_n:=x_0$ and $s_n:=S$. For $\boldsymbol{x}\in\mathbb{R}^n$,
	let $f_{\boldsymbol{x}}:[0,S]\to\mathbb{R}$ be the continuous periodic
	piecewise-linear interpolant defined by
	\[
	f_{\boldsymbol{x}}(s_i)=x_i,
	\qquad
	f_{\boldsymbol{x}}(t)=x_i+\frac{t-s_i}{r_i}(x_{i+1}-x_i),
	\quad t\in[s_i,s_{i+1}],
	\quad i=0,\dots,n-1.
	\]
	Then
	\begin{equation}\label{eq:energy_identity}
		\boldsymbol{x}^\top L\boldsymbol{x}
		=
		\int_0^S|f_{\boldsymbol{x}}'(t)|^2\,\mathrm dt.
	\end{equation}
	Moreover, there exist constants $m_1,m_2>0$, depending only on
	$\kappa$, such that
	\begin{equation}\label{eq:mass_equivalence}
		m_1\bar r\|\boldsymbol{x}\|_2^2
		\le \|f_{\boldsymbol{x}}\|_{L^2(0,S)}^2
		\le m_2\bar r\|\boldsymbol{x}\|_2^2.
	\end{equation}
\end{lemma}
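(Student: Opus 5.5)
The plan is a direct cell-by-cell computation on the resistance-arclength mesh $0=s_0<s_1<\dots<s_n=S$. The $i$th cell $[s_i,s_{i+1}]$ has length $r_i$, and $f_{\boldsymbol x}$ is affine on it with endpoint values $x_i$ and $x_{i+1}$. The convention $x_n=x_0$ makes $f_{\boldsymbol x}$ continuous and periodic, and it makes the last cell $[s_{n-1},S]$ represent the closing cycle edge $\{v_{n-1},v_0\}$. Both claims then reduce to elementary integrals over single cells, summed over $i=0,\dots,n-1$.

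\emph{Energy identity.} On the $i$th cell, $f_{\boldsymbol x}'\equiv (x_{i+1}-x_i)/r_i$. Hence
\[
\int_{s_i}^{s_{i+1}}|f_{\boldsymbol x}'|^2\,\mathrm dt=\frac{(x_{i+1}-x_i)^2}{r_i}=c_i(x_{i+1}-x_i)^2 .
\]
Summing over $i$ (indices modulo $n$) gives $\sum_i c_{i,i+1}(x_{i+1}-x_i)^2$. This is exactly $\boldsymbol x^\top L\boldsymbol x$ for the weighted-cycle Laplacian $L=D-A$, which proves \eqref{eq:energy_identity}.

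\emph{Mass equivalence.} For an affine function with endpoint values $a,b$ on an interval of length $r$,
\[
\int|f|^2=\frac{r}{3}(a^2+ab+b^2).
\]
Since $|ab|\le(a^2+b^2)/2$, we have
\[
\frac{a^2+b^2}{2}\le a^2+ab+b^2\le \frac{3(a^2+b^2)}{2},
\]
so each cell integral lies between $\frac{r_i}{6}(x_i^2+x_{i+1}^2)$ and $\frac{r_i}{2}(x_i^2+x_{i+1}^2)$. Summing over cells and regrouping by vertex, each $x_i^2$ appears with weight $r_{i-1}+r_i$, again using periodicity. Therefore
\[
\frac16\sum_i (r_{i-1}+r_i)x_i^2\le \|f_{\boldsymbol x}\|_{L^2(0,S)}^2\le \frac12\sum_i(r_{i-1}+r_i)x_i^2 .
\]

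\emph{Comparison with $\bar r$.} By Assumption~\ref{ass:deterministic}, every $r_i\in[\kappa^{-1},1]$, and hence also $\bar r\in[\kappa^{-1},1]$. It follows that $r_i\ge \kappa^{-1}\ge \bar r/\kappa$ and $r_i\le 1\le \kappa\bar r$. Consequently
\[
\frac{2\bar r}{\kappa}\le r_{i-1}+r_i\le 2\kappa\bar r ,
\]
and \eqref{eq:mass_equivalence} holds with $m_1=1/(3\kappa)$ and $m_2=\kappa$.

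There is no substantive obstacle here. The only points needing care are the wrap-around bookkeeping (the cell $[s_{n-1},S]$ together with $x_n=x_0$, so that each vertex receives exactly two adjacent cell resistances) and making sure the constants depend only on $\kappa$, which the uniform two-sided bounds on $r_i$ and $\bar r$ guarantee.
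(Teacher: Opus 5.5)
Your proposal is correct and follows essentially the same route as the paper's proof: the same cell-by-cell energy computation, the same affine-cell formula $\frac{r_i}{3}(x_i^2+x_ix_{i+1}+x_{i+1}^2)$ with the two-sided bound, regrouping into vertex weights $r_{i-1}+r_i$, and the comparison $r_i,\bar r\in[\kappa^{-1},1]$. Your explicit constants $m_1=1/(3\kappa)$ and $m_2=\kappa$ check out and simply make precise what the paper leaves implicit.
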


\begin{proof}
	On each interval $[s_i,s_{i+1}]$,
	\[
	f_{\boldsymbol{x}}'(t)=\frac{x_{i+1}-x_i}{r_i},
	\]
	so
	\[
	\int_{s_i}^{s_{i+1}}|f_{\boldsymbol{x}}'(t)|^2\,\mathrm dt
	=
	\frac{(x_{i+1}-x_i)^2}{r_i}
	=
	c_{i,i+1}(x_i-x_{i+1})^2.
	\]
	Summing over $i$ yields \eqref{eq:energy_identity}.
	
	Further,
	\[
	\int_{s_i}^{s_{i+1}}f_{\boldsymbol{x}}(t)^2\,\mathrm dt
	=
	\frac{r_i}{3}\bigl(x_i^2+x_ix_{i+1}+x_{i+1}^2\bigr).
	\]
	Using
	\[
	\frac12(x_i^2+x_{i+1}^2)
	\le x_i^2+x_ix_{i+1}+x_{i+1}^2
	\le \frac32(x_i^2+x_{i+1}^2),
	\]
	we obtain
	\[
	\frac{r_i}{6}(x_i^2+x_{i+1}^2)
	\le
	\int_{s_i}^{s_{i+1}}f_{\boldsymbol{x}}(t)^2\,\mathrm dt
	\le
	\frac{r_i}{2}(x_i^2+x_{i+1}^2).
	\]
	Summing over $i$ gives
	\[
	\frac{1}{6}\sum_{i=0}^{n-1}(r_i+r_{i-1})x_i^2
	\le
	\|f_{\boldsymbol{x}}\|_{L^2(0,S)}^2
	\le
	\frac{1}{2}\sum_{i=0}^{n-1}(r_i+r_{i-1})x_i^2.
	\]
	Since $r_i\in[\kappa^{-1},1]$ and $\bar r\in[\kappa^{-1},1]$, this
	proves \eqref{eq:mass_equivalence}.
\end{proof}

\begin{lemma}[Continuous spectral gap on the circle]\label{lem:circle_gap_disc}
	Let
	\[
	\mu_1:=\left(\frac{2\pi}{S}\right)^2,
	\qquad
	\mathcal U:=\operatorname{span}\left\{
	\sin\frac{2\pi t}{S},\ \cos\frac{2\pi t}{S}
	\right\}\subset H^1_{\mathrm{per}}(0,S).
	\]
	If $g\in H^1_{\mathrm{per}}(0,S)$ has mean zero and
	$g_\perp:=g-P_{\mathcal U}g$, then
	\begin{equation}\label{eq:circle_gap_disc}
		\int_0^S|g'(t)|^2\,\mathrm dt
		\ge
		\mu_1\int_0^S|g(t)|^2\,\mathrm dt
		+
		3\mu_1\int_0^S|g_\perp(t)|^2\,\mathrm dt.
	\end{equation}
\end{lemma}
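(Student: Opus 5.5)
The plan is to expand $g$ in its Fourier series on the circle of length $S$ and read off \eqref{eq:circle_gap_disc} from Parseval's identity. Write $\omega:=2\pi/S$, so $\mu_1=\omega^2$, and use the orthonormal basis of $L^2(0,S)$ consisting of $S^{-1/2}$, $\sqrt{2/S}\cos(k\omega t)$ and $\sqrt{2/S}\sin(k\omega t)$ for $k\ge1$.

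For $g\in H^1_{\mathrm{per}}(0,S)$, write
\[
g=a_0S^{-1/2}+\sum_{k\ge1}\bigl(a_k\sqrt{2/S}\cos(k\omega t)+b_k\sqrt{2/S}\sin(k\omega t)\bigr).
\]
Then Parseval gives
\[
\int_0^S|g|^2=|a_0|^2+\sum_{k\ge1}(a_k^2+b_k^2).
\]
Periodicity of $g$ lets us differentiate the series termwise; equivalently, integrating by parts against each basis function produces no boundary terms. This yields
\[
\int_0^S|g'|^2=\sum_{k\ge1}(k\omega)^2(a_k^2+b_k^2).
\]
The mean-zero hypothesis forces $a_0=0$.

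Since $\mathcal U$ is exactly the span of the $k=1$ modes, the orthogonal projection onto $\mathcal U$ keeps only $(a_1,b_1)$. Hence
\[
\|g_\perp\|_{L^2}^2=\sum_{k\ge2}(a_k^2+b_k^2).
\]
Now compare the two sides mode by mode:
\begin{itemize}
\item For $k=1$, the coefficient on the left is $\omega^2=\mu_1$, which matches the first term on the right.
\item For $k\ge2$, we have $k^2\omega^2\ge4\mu_1=\mu_1+3\mu_1$, which covers both the $\mu_1\int|g|^2$ contribution and the $3\mu_1\int|g_\perp|^2$ contribution.
\end{itemize}
Summing over $k$ gives \eqref{eq:circle_gap_disc} exactly.

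No step here is a real obstacle. The only point needing care is justifying the termwise Parseval identity for $g'$ when $g$ lies only in $H^1_{\mathrm{per}}$ rather than in $C^1$. I would handle this by noting that the Fourier coefficients of $g'$ equal $k\omega$ times those of $g$ (via integration by parts with periodic boundary terms cancelling), and then applying Parseval to $g'\in L^2$.
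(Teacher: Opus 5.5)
Your proposal is correct and follows the paper's proof: both expand $g$ in the real Fourier basis, use the mean-zero hypothesis to remove the constant mode, and compare the $k=1$ modes (eigenvalue $\mu_1$) with the $k\ge2$ modes (eigenvalue at least $4\mu_1=\mu_1+3\mu_1$). Your remark on justifying Parseval for $g'$ when $g\in H^1_{\mathrm{per}}(0,S)$, via periodic integration by parts, supplies a detail the paper leaves implicit.
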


\begin{proof}
	Expand $g$ in the real Fourier basis on $[0,S]$. The $m=1$ modes
	have eigenvalue $\mu_1$, whereas every mode $m\ge 2$ has eigenvalue at
	least $4\mu_1$. This gives \eqref{eq:circle_gap_disc}.
\end{proof}

\begin{lemma}[Low-energy mass transfer]\label{lem:low_energy_mass_transfer}
	Let $\boldsymbol{z}\in\mathbb{R}^n$ satisfy
	\[
	\boldsymbol{z}^\top L\boldsymbol{z}\le C_*n^{-2}\|\boldsymbol{z}\|_2^2.
	\]
	Then
	\begin{equation}\label{eq:low_energy_mass_transfer}
		\|f_{\boldsymbol{z}}\|_{L^2(0,S)}^2
		=
		\bar r\|\boldsymbol{z}\|_2^2\bigl(1+O_{\kappa,C_*}(\delta_n)\bigr).
	\end{equation}
	If $\boldsymbol{z}$ and $\boldsymbol{y}$ both satisfy the same
	low-energy bound, then
	\begin{equation}\label{eq:bilinear_mass_transfer}
		\left|
		\langle f_{\boldsymbol{z}},f_{\boldsymbol{y}}\rangle_{L^2(0,S)}
		-\bar r\langle\boldsymbol{z},\boldsymbol{y}\rangle_{\mathbb{R}^n}
		\right|
		\le
		C_{\kappa,C_*}\delta_n\bar r
		\|\boldsymbol{z}\|_2\|\boldsymbol{y}\|_2.
	\end{equation}
\end{lemma}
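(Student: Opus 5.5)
We compare $\|f_{\boldsymbol z}\|_{L^2(0,S)}^2$ with $\bar r\|\boldsymbol z\|_2^2$ through the exact nodal formula from the proof of Lemma~\ref{lem:continuous_embed_disc}. On each interval,
\[
\int_{s_i}^{s_{i+1}}f_{\boldsymbol z}^2
=\frac{r_i}{3}\bigl(z_i^2+z_iz_{i+1}+z_{i+1}^2\bigr)
=\frac{r_i}{2}(z_i^2+z_{i+1}^2)-\frac{r_i}{6}(z_{i+1}-z_i)^2 .
\]
Summing over $i$ gives
\[
\|f_{\boldsymbol z}\|^2=\sum_i \tfrac12(r_i+r_{i-1})z_i^2-\tfrac16\sum_i r_i(z_{i+1}-z_i)^2 .
\]

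The second sum is a discretization error. It equals $\frac16\sum_i r_i^2c_i(z_{i+1}-z_i)^2\le \frac16 r_{\max}^2\,\boldsymbol z^\top L\boldsymbol z$, which is at most $\frac16 C_*r_{\max}^2 n^{-2}\|\boldsymbol z\|^2$. Since $r_{\max}\le 1$ and $\bar r\ge\kappa^{-1}$, this is $O_{\kappa,C_*}(n^{-2})\,\bar r\|\boldsymbol z\|^2$. Moreover $\eta=r_{\max}/S\ge 1/(\kappa n)$, so $n^{-2}\lesssim\eta\le\delta_n$, and the term is absorbed.

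The main term is $\sum_i \rho_i z_i^2$ with weights $\rho_i=\frac12(r_i+r_{i-1})$, to be compared with $\bar r\sum_i z_i^2$. Individual $\rho_i$ are not close to $\bar r$; only their partial sums are, by the discrepancy bound. We therefore use summation by parts. Set $E_k:=\sum_{i<k}(\rho_i-\bar r)$. Up to a boundary term of size $O(r_{\max})$, $E_k$ equals $A_{0,k}-k\bar r$ plus $O(r_{\max})$, so $|E_k|\le D+2r_{\max}\le S\delta_n\cdot O(1)$. Abel summation on the cycle, using that $E_n=0$ exactly, gives
\[
\sum_i(\rho_i-\bar r)z_i^2=-\sum_k E_{k+1}\,(z_{k+1}^2-z_k^2),
\]
and hence
\[
\Bigl|\sum_i(\rho_i-\bar r)z_i^2\Bigr|\le \max_k|E_k|\sum_k|z_{k+1}-z_k|\,|z_{k+1}+z_k| .
\]
By Cauchy--Schwarz and $c_i\ge1$,
\[
\sum_k|z_{k+1}-z_k|\,|z_{k+1}+z_k|\le(\boldsymbol z^\top L\boldsymbol z)^{1/2}\cdot 2\|\boldsymbol z\|\le 2\sqrt{C_*}\,n^{-1}\|\boldsymbol z\|^2 .
\]
Since $S\le n$, the product is $O(\delta_n)\|\boldsymbol z\|^2$, which is $O(\delta_n)\bar r\|\boldsymbol z\|^2$ because $\bar r\ge\kappa^{-1}$. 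This proves \eqref{eq:low_energy_mass_transfer}.

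The bilinear estimate \eqref{eq:bilinear_mass_transfer} follows the same route. One could use polarization, but $\boldsymbol z\pm\boldsymbol y$ is still low-energy only after normalization, which is awkward when the norms differ. It is cleaner to redo the computation directly. The interval formula gives
\[
\langle f_{\boldsymbol z},f_{\boldsymbol y}\rangle=\sum_i\rho_i z_iy_i-\tfrac16\sum_i r_i\Delta z_i\,\Delta y_i ,
\]
where $\Delta z_i=z_{i+1}-z_i$ and $\Delta y_i=y_{i+1}-y_i$.

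The correction term is bounded by Cauchy--Schwarz in the energy norms: it is at most $\frac16 r_{\max}^2(\boldsymbol z^\top L\boldsymbol z)^{1/2}(\boldsymbol y^\top L\boldsymbol y)^{1/2}\le \frac16 C_*r_{\max}^2n^{-2}\|\boldsymbol z\|\|\boldsymbol y\|$, which is absorbed as before. For the main term, Abel summation applies with $z_{k+1}y_{k+1}-z_ky_k=\Delta z_k\,y_{k+1}+z_k\,\Delta y_k$. Each of the two resulting pieces is bounded by $n^{-1}\sqrt{C_*}\,\|\boldsymbol z\|\|\boldsymbol y\|$.

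The step needing care is the boundary bookkeeping relating $E_k$ to the discrepancy $D$. The symmetrized weights $\rho_i$ shift partial sums by at most $r_{\max}$. The start index is arbitrary, but the bound on $D$ is uniform over all starting points $p$, so this is harmless. The constant depends only on $\kappa$ and $C_*$, as required.
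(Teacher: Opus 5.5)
Your proof is correct, but it takes a different route from the paper. The paper never expands $\|f_{\boldsymbol z}\|^2$ nodally. Instead it normalizes $h(t)=Sf(St)^2/\|f\|^2_{L^2(0,S)}$, so that $\int_0^1 h=1$ and $\frac1n\sum_i h(y_i)=\bar r/\|f\|^2$ when $\|\boldsymbol z\|_2=1$. It then bounds $\int_0^1|h'|\le 2S\|f'\|/\|f\|$ using the energy bound together with the lower mass bound of Lemma~\ref{lem:continuous_embed_disc}. Finally it applies the continuous quadrature Lemma~\ref{lem:quadrature_discrepancy}, which is integration by parts against the empirical distribution function of the nodes $y_i$. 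The bilinear case is the same lemma applied to $q(t)=f(St)g(St)$.

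You instead start from the exact identity $\|f_{\boldsymbol z}\|^2=\sum_i\rho_i z_i^2-\frac16\sum_i r_i(\Delta z_i)^2$. You dispose of the cross term as $O(r_{\max}^2\,\boldsymbol z^\top L\boldsymbol z)$. You then run discrete Abel summation with the partial sums $E_k=A_{0,k}-\frac kn S+\frac12(r_{n-1}-r_{k-1})$, which vanish at $k=0$ and $k=n$. This is the discrete counterpart of the same discrepancy-times-variation mechanism.

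The paper's version is more modular: one quadrature lemma also drives Lemma~\ref{lem:continuous_mean_control} and the sinusoidal test-vector estimates in Theorem~\ref{thm:fiedler-sine}. Yours is self-contained and has several advantages:
\begin{itemize}
\item It yields the additive error bound directly.
\item It needs neither the lower mass bound nor an inversion of the ratio $\bar r/\|f\|^2$.
\item It avoids the $C^1$ hypothesis of Lemma~\ref{lem:quadrature_discrepancy}, which the piecewise-smooth $h$ and $q$ meet only in the absolutely continuous sense.
\item It uses discrepancy only for arcs starting at $v_0$.
\item It shows that the interpolation correction is of relative order $n^{-2}$, well below $\delta_n\gtrsim 1/(\kappa n)$.
\end{itemize}
Your direct treatment of the bilinear estimate, splitting $z_{k+1}y_{k+1}-z_ky_k=\Delta z_k\,y_{k+1}+z_k\,\Delta y_k$ rather than polarizing, is also sound.
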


\begin{proof}
	By homogeneity, assume first that $\|\boldsymbol{z}\|_2=1$, and write
	$f:=f_{\boldsymbol{z}}$. Define
	\[
	h(t):=\frac{S\,f(St)^2}{\|f\|_{L^2(0,S)}^2},
	\qquad t\in[0,1].
	\]
	Then $\int_0^1 h(t)\,dt=1$, and
	\[
	\frac1n\sum_{i=0}^{n-1}h(y_i)
	=
	\frac{S}{n\,\|f\|_{L^2(0,S)}^2}\sum_{i=0}^{n-1}z_i^2
	=
	\frac{\bar r}{\|f\|_{L^2(0,S)}^2}.
	\]
	Moreover,
	\[
	\int_0^1|h'(t)|\,dt
	\le
	\frac{2S\|f'\|_{L^2(0,S)}}{\|f\|_{L^2(0,S)}}
	\le C_{\kappa,C_*},
	\]
	where we used Lemma~\ref{lem:continuous_embed_disc}, the low-energy bound,
	and $S\asymp_\kappa n$. Applying
	Lemma~\ref{lem:quadrature_discrepancy} to $h$ gives
	\[
	\left|
	\frac{\bar r}{\|f\|_{L^2(0,S)}^2}-1
	\right|
	\le
	C_{\kappa,C_*}\left(\Delta+\frac1n\right).
	\]
	Since $n^{-1}\le C_\kappa \eta\le C_\kappa\delta_n$, this proves
	\eqref{eq:low_energy_mass_transfer}.
	
	For the bilinear estimate, write $f:=f_{\boldsymbol{z}}$,
	$g:=f_{\boldsymbol{y}}$, and
	\[
	q(t):=f(St)g(St),\qquad t\in[0,1].
	\]
	Then $q(0)=q(1)$,
	\[
	\frac1n\sum_{i=0}^{n-1}q(y_i)
	=
	\frac1n\langle\boldsymbol{z},\boldsymbol{y}\rangle_{\mathbb{R}^n},
	\qquad
	\int_0^1q(t)\,dt
	=
	\frac1S\langle f,g\rangle_{L^2(0,S)}.
	\]
	Further,
	\[
	\int_0^1|q'(t)|\,dt
	=
	\int_0^S|f'(s)g(s)+f(s)g'(s)|\,ds
	\le
	\|f'\|_{L^2}\|g\|_{L^2}+\|f\|_{L^2}\|g'\|_{L^2}.
	\]
	Using the low-energy assumption and
	Lemma~\ref{lem:continuous_embed_disc},
	\[
	\|f'\|_{L^2}\le C_*^{1/2}n^{-1}\|\boldsymbol{z}\|_2,
	\qquad
	\|g'\|_{L^2}\le C_*^{1/2}n^{-1}\|\boldsymbol{y}\|_2,
	\]
	and
	\[
	\|f\|_{L^2}\le C_\kappa \bar r^{1/2}\|\boldsymbol{z}\|_2,
	\qquad
	\|g\|_{L^2}\le C_\kappa \bar r^{1/2}\|\boldsymbol{y}\|_2.
	\]
	Hence
	\[
	\int_0^1|q'(t)|\,dt
	\le
	C_{\kappa,C_*}n^{-1}\|\boldsymbol{z}\|_2\|\boldsymbol{y}\|_2.
	\]
	Applying Lemma~\ref{lem:quadrature_discrepancy} to $q$ and multiplying
	by $S=n\bar r$ yields \eqref{eq:bilinear_mass_transfer}.
\end{proof}

\begin{lemma}[Continuous mean control]\label{lem:continuous_mean_control}
	If $\boldsymbol{z}\perp\boldsymbol{1}$ and $f=f_{\boldsymbol{z}}$, then
	\begin{equation}\label{eq:continuous_mean_control}
		\left|\frac1S\int_0^Sf(t)\,\mathrm dt\right|
		\le
		C_\kappa\left(\Delta+\frac1n\right)S^{1/2}\|f'\|_{L^2(0,S)}.
	\end{equation}
\end{lemma}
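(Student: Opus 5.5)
The plan is to reduce the continuous mean to a discrete average, which vanishes because $\boldsymbol{z}\perp\boldsymbol{1}$, and to control the difference between the two with the periodic quadrature estimate of Lemma~\ref{lem:quadrature_discrepancy}.

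Set $g(t):=f(St)$ for $t\in[0,1]$. Since $f$ is the continuous periodic interpolant with $f(s_n)=f(S)=x_n=z_0=f(0)$, we have $g(0)=g(1)$. The function $g$ is only piecewise $C^1$, but Lemma~\ref{lem:quadrature_discrepancy} was proved by integration by parts against $R_n$, and that argument holds verbatim for absolutely continuous periodic $g$. (Alternatively, one can approximate $g$ by $C^1$ periodic functions in $W^{1,1}$ and pass to the limit.) At the nodes, $g(y_i)=f(s_i)=z_i$, so
\[
\frac1n\sum_{i=0}^{n-1}g(y_i)=\frac1n\,\boldsymbol{1}^\top\boldsymbol{z}=0 .
\]
Also $\int_0^1 g(t)\,\mathrm dt=\frac1S\int_0^S f(t)\,\mathrm dt$. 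The quadrature lemma therefore gives
\[
\left|\frac1S\int_0^S f\right|\le\left(\Delta+\frac1n\right)\int_0^1|g'(t)|\,\mathrm dt .
\]

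Next, bound the total variation. The substitution $s=St$ gives $\int_0^1|g'(t)|\,\mathrm dt=\int_0^S|f'(s)|\,\mathrm ds$. By Cauchy--Schwarz on $[0,S]$, this is at most $S^{1/2}\|f'\|_{L^2(0,S)}$. Combining the two displays yields \eqref{eq:continuous_mean_control} with $C_\kappa=1$. The constant is stated as $C_\kappa$ only to absorb the regularity technicality.

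The only delicate point is applying Lemma~\ref{lem:quadrature_discrepancy} to a function that is not $C^1$. I would handle it by mollifying $g$ periodically to get $g_\epsilon\in C^1$ with $g_\epsilon\to g$ uniformly and $\int|g_\epsilon'|\le\int|g'|$. Uniform convergence makes both the node average and the integral converge, so the inequality survives the limit.
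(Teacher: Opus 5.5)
Your proof is correct and follows the paper's argument exactly. You rescale to $g(t)=f(St)$, use $\boldsymbol{z}\perp\boldsymbol{1}$ to make the node average vanish, apply Lemma~\ref{lem:quadrature_discrepancy}, and bound $\int_0^S|f'|$ by Cauchy--Schwarz; your extra care about $g$ being only piecewise $C^1$, which the paper passes over, is a valid refinement, and you are right that the argument gives constant $1$.
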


\begin{proof}
	Set $h(t):=f(St)$ on $[0,1]$. Since
	$\boldsymbol{z}\perp\boldsymbol{1}$,
	\[
	\frac1n\sum_{i=0}^{n-1}h(y_i)=\frac1n\sum_{i=0}^{n-1}z_i=0.
	\]
	Applying Lemma~\ref{lem:quadrature_discrepancy} to $h$ gives
	\[
	\left|\frac1S\int_0^Sf(t)\,dt\right|
	=
	\left|\int_0^1h(t)\,dt-\frac1n\sum_{i=0}^{n-1}h(y_i)\right|
	\le
	\left(\Delta+\frac1n\right)\int_0^1|h'(t)|\,dt.
	\]
	Since $h'(t)=Sf'(St)$,
	\[
	\int_0^1|h'(t)|\,dt
	=
	\int_0^S|f'(s)|\,ds
	\le
	S^{1/2}\|f'\|_{L^2(0,S)},
	\]
	and the claim follows.
\end{proof}

\subsection{Sinusoidal approximation of the first two modes}

\begin{theorem}[Fiedler vector is nearly sinusoidal in resistance arclength]\label{thm:fiedler-sine}
	There exists $\delta_0=\delta_0(\kappa)>0$ such that, if
	$\delta_n\le \delta_0$, then for every unit Fiedler vector
	$\boldsymbol{u}_1$ there exist a phase $\phi\in\mathbb{R}$ and a sign
	$\sigma\in\{\pm 1\}$ such that
	\begin{equation}\label{eq:fiedler_sine_sup}
		\max_{0\le i\le n-1}
		\left|
		u_{1,i}-\sigma\sqrt{\frac2n}
		\sin\left(\frac{2\pi s_i}{S}+\phi\right)
		\right|
		\le
		C_\kappa\sqrt{\frac{\delta_n}{n}}.
	\end{equation}
	Moreover,
	\begin{equation}\label{eq:lambda1_upper_for_later}
		\lambda_1\le \bar r\mu_1(1+C_\kappa\delta_n).
	\end{equation}
\end{theorem}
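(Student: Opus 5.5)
We prove the eigenvalue bound \eqref{eq:lambda1_upper_for_later} first and then use it to localize the Fiedler vector.

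\textbf{Step 1: eigenvalue upper bound.} Sample the continuous modes at the resistance nodes: $x^{c}_i=\cos(2\pi s_i/S)$ and $x^{s}_i=\sin(2\pi s_i/S)$. Subtract their means to get $\tilde x^{c},\tilde x^{s}\perp\boldsymbol 1$.

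For the energy, use $x^\top L x=\sum_i (x_{i+1}-x_i)^2/r_i$. Since $x_{i+1}-x_i=\int_{s_i}^{s_{i+1}} g'$ with $g=\cos(2\pi t/S)$ or $\sin(2\pi t/S)$, Cauchy--Schwarz gives $\boldsymbol x^\top L\boldsymbol x\le \int_0^S|g'|^2=\mu_1 S/2$.

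For the mass, Lemma~\ref{lem:quadrature_discrepancy} gives $\frac1n\sum_i g(y_i)^2=\tfrac12+O(\delta_n)$. The total variation $\int_0^1|(g^2)'|$ is $O(1)$ after rescaling. The mean satisfies $\frac1n\sum g(y_i)=O(\delta_n)$, so centering changes the norm only by $O(n\delta_n^2)$. Hence $\|\tilde x\|_2^2=\tfrac n2(1+O(\delta_n))$.

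Courant--Fischer \eqref{eq:rr_lambda1} then yields $\lambda_1\le \mu_1 S/n\,(1+C_\kappa\delta_n)=\bar r\mu_1(1+C_\kappa\delta_n)$. Applying the same test on the two-dimensional span, with the cross inner product also $O(n\delta_n)$ by quadrature, gives $\lambda_2\le\bar r\mu_1(1+C\delta_n)$. Step 1 only needs the first statement.

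\textbf{Step 2: energy-to-projection argument.} Let $\boldsymbol u=\boldsymbol u_1$ and $f=f_{\boldsymbol u}$. By Step 1, $\boldsymbol u^\top L\boldsymbol u=\lambda_1\le C_*n^{-2}$, since $\bar r\mu_1\asymp_\kappa n^{-2}$. Lemma~\ref{lem:low_energy_mass_transfer} therefore applies and gives $\|f\|_{L^2}^2=\bar r(1+O(\delta_n))$.

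Set $g=f-\frac1S\int_0^S f$. By Lemma~\ref{lem:continuous_mean_control}, the subtracted mean is $O(\delta_n S^{1/2}n^{-1})$ in size, so $\|g\|^2=\bar r(1+O(\delta_n))$. By Lemma~\ref{lem:continuous_embed_disc} and Lemma~\ref{lem:circle_gap_disc},
\[
\lambda_1=\|g'\|^2\ge \mu_1\|g\|^2+3\mu_1\|g_\perp\|^2 .
\]
Combined with Step 1 this gives
\[
3\mu_1\|g_\perp\|^2\le \bar r\mu_1\,C\delta_n ,
\]
that is, $\|g_\perp\|_{L^2}^2\le C\bar r\delta_n$.

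Write $P_{\mathcal U}g=\alpha\sin(2\pi t/S+\phi)$. Its $L^2$ norm squared is $\alpha^2 S/2=\bar r(1+O(\delta_n))$, so $\alpha=\sigma\sqrt{2/n}\,(1+O(\delta_n))$ for a sign $\sigma$.

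\textbf{Step 3: from $L^2$ to sup norm (main obstacle).} Let $e:=f-\sigma\sqrt{2/n}\sin(2\pi t/S+\phi)$. Up to the mean constant and the amplitude error, $e$ equals $g_\perp$, so $\|e\|_{L^2}^2\le C\bar r\delta_n$.

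For the derivative, $\|g_\perp'\|^2=\|g'\|^2-\|(P_{\mathcal U}g)'\|^2$ because Fourier modes are orthogonal in $H^1$. Also $\|(P_{\mathcal U}g)'\|^2=\mu_1\|P_{\mathcal U}g\|^2=\mu_1(\|g\|^2-\|g_\perp\|^2)$. Together with $\|g'\|^2=\lambda_1\le\bar r\mu_1(1+C\delta_n)$ this gives $\|e'\|_{L^2}^2\le C\mu_1\bar r\delta_n\asymp n^{-2}\delta_n$.

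Apply the one-dimensional Gagliardo--Nirenberg/Agmon inequality on the circle of length $S$:
\[
\|e\|_\infty^2\le \frac1S\|e\|_2^2+2\|e\|_2\|e'\|_2 .
\]
Both terms are $O(\delta_n/n)$, using $S\asymp n$ and $\bar r\asymp1$. Evaluating at the nodes $t=s_i$ gives \eqref{eq:fiedler_sine_sup}.

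The delicate point is making the derivative bound on $e$ genuinely of order $\delta_n n^{-2}$. The eigenvalue gap only controls the combination $\|g'\|^2-\mu_1\|g\|^2$, so the estimate must be tracked through the $H^1$ Fourier orthogonality with constants uniform in $\kappa$. One must also check that the mean-subtraction and amplitude corrections contribute $O(\sqrt{\delta_n/n})$ in sup norm. These corrections are constants or smooth sinusoids of size $\sqrt{1/n}\,O(\delta_n)$, so they are harmless. Finally, choosing $\delta_0$ small ensures $\alpha\neq0$, so the phase $\phi$ is well defined.
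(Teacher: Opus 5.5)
Your proposal is correct and follows essentially the same route as the paper. You bound $\lambda_1$ by Rayleigh--Ritz with the centered sampled sinusoid, obtain $L^2$-closeness of the interpolant to $\mathcal U$ from low-energy mass transfer, mean control and the circle gap lemma, bound the derivative of the orthogonal remainder, and pass to the sup norm via a one-dimensional Sobolev/Agmon inequality; the only differences (working with the unnormalized interpolant, and using Cauchy--Schwarz instead of quadrature for the discrete energy) are cosmetic.
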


\begin{proof}
	Let $\theta_i:=2\pi s_i/S$ and
	$x_i:=\sqrt{2/n}\sin\theta_i$. By
	Lemma~\ref{lem:quadrature_discrepancy}, the centered vector
	$\widetilde{\boldsymbol{x}}:=P\boldsymbol{x}$ satisfies
	\[
	\|\widetilde{\boldsymbol{x}}\|_2^2=1+O_\kappa(\delta_n),
	\qquad
	\widetilde{\boldsymbol{x}}^\top L\widetilde{\boldsymbol{x}}
	=
	\bar r\mu_1(1+O_\kappa(\delta_n)).
	\]
	Rayleigh--Ritz therefore yields \eqref{eq:lambda1_upper_for_later}.
	
	Let $f:=f_{\boldsymbol{u}_1}$ and
	$g:=f/\|f\|_{L^2(0,S)}$. By
	Lemma~\ref{lem:low_energy_mass_transfer} and
	\eqref{eq:lambda1_upper_for_later},
	\[
	\|f\|_{L^2(0,S)}^2=\bar r(1+O_\kappa(\delta_n)),
	\qquad
	\int_0^S|g'(t)|^2\,dt\le \mu_1(1+C_\kappa\delta_n).
	\]
	Let
	\[
	m(g):=\frac1S\int_0^S g(t)\,dt,
	\qquad
	\widetilde g:=g-m(g),
	\qquad
	\widehat g:=\frac{\widetilde g}{\|\widetilde g\|_{L^2(0,S)}}.
	\]
	By Lemma~\ref{lem:continuous_mean_control},
	\[
	|m(g)|\le C_\kappa \delta_n S^{-1/2},
	\]
	so $\widehat g$ has mean zero, unit norm, and
	\[
	\int_0^S|\widehat g'(t)|^2\,dt
	\le \mu_1(1+C_\kappa\delta_n).
	\]
	Lemma~\ref{lem:circle_gap_disc} implies
	\[
	\operatorname{dist}_{L^2(0,S)}(\widehat g,\mathcal U)
	\le C_\kappa\sqrt{\delta_n}.
	\]
	Let
	\[
	P_{\mathcal U}\widehat g=a\psi_*,
	\qquad
	\|\psi_*\|_{L^2(0,S)}=1,
	\qquad
	h_\perp:=\widehat g-P_{\mathcal U}\widehat g.
	\]
	Then $h_\perp\perp\mathcal U$,
	$\|h_\perp\|_{L^2}\le C_\kappa\sqrt{\delta_n}$, and
	$a^2=1-\|h_\perp\|_{L^2}^2$, so $|a-1|\le C_\kappa\delta_n$.
	Since $\psi_*\in\mathcal U$, there exist $\phi\in\mathbb{R}$ and
	$\sigma\in\{\pm1\}$ such that
	\[
	\psi_*(t)=\sigma\sqrt{\frac2S}
	\sin\left(\frac{2\pi t}{S}+\phi\right).
	\]
	
	Moreover,
	\[
	\int_0^S|h_\perp'(t)|^2\,dt
	=
	\int_0^S|\widehat g'(t)|^2\,dt-a^2\mu_1
	\le
	C_\kappa\mu_1\delta_n.
	\]
	The periodic Sobolev inequality in one dimension gives
	\[
	\|h_\perp\|_{L^\infty(0,S)}
	\le
	C\left(
	S^{-1/2}\|h_\perp\|_{L^2(0,S)}
	+
	S^{1/2}\|h_\perp'\|_{L^2(0,S)}
	\right)
	\le
	C_\kappa\sqrt{\frac{\delta_n}{S}}.
	\]
	Since
	\[
	\widehat g-\psi_*=h_\perp+(a-1)\psi_*,
	\]
	and $\|\psi_*\|_{L^\infty}\le CS^{-1/2}$, we obtain
	\[
	\|\widehat g-\psi_*\|_{L^\infty(0,S)}
	\le
	C_\kappa\sqrt{\frac{\delta_n}{S}}.
	\]
	Returning from $\widehat g$ to $g$ introduces only an
	$O_\kappa(\delta_nS^{-1/2})$ error. Finally,
	$\|f\|_{L^2(0,S)}=\sqrt{\bar r}(1+O_\kappa(\delta_n))$ and
	$S=n\bar r$, which yield \eqref{eq:fiedler_sine_sup}.
\end{proof}

\begin{corollary}[Second mode is nearly a cosine]\label{cor:second-cosine}
	Under the assumptions of Theorem~\ref{thm:fiedler-sine}, fix the unit
	Fiedler vector $\boldsymbol{u}_1$ and the phase $\phi$ from
	\eqref{eq:fiedler_sine_sup}. There exists a unit eigenvector
	$\boldsymbol{u}_2$ associated with $\lambda_2$, chosen orthogonal to
	$\boldsymbol{u}_1$, and a sign $\tau\in\{\pm1\}$ such that
	\begin{equation}\label{eq:second_cosine_sup}
		\max_{0\le i\le n-1}
		\left|
		u_{2,i}-\tau\sqrt{\frac2n}
		\cos\left(\frac{2\pi s_i}{S}+\phi\right)
		\right|
		\le
		C_\kappa\sqrt{\frac{\delta_n}{n}}.
	\end{equation}
	Moreover,
	\begin{equation}\label{eq:lambda2_upper_for_later}
		\lambda_2\le \bar r\mu_1(1+C_\kappa\delta_n).
	\end{equation}
\end{corollary}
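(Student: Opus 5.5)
The argument repeats the proof of Theorem~\ref{thm:fiedler-sine} on the orthogonal complement of $\boldsymbol{u}_1$. First the upper bound \eqref{eq:lambda2_upper_for_later}. Let $\theta_i:=2\pi s_i/S$ and set $\boldsymbol{x}:=\sqrt{2/n}\sin(\theta_i+\phi)$ and $\boldsymbol{y}:=\sqrt{2/n}\cos(\theta_i+\phi)$, both projected by $P$. As in Theorem~\ref{thm:fiedler-sine}, Lemma~\ref{lem:quadrature_discrepancy} applied to $\sin^2$, $\cos^2$, $\sin\cos$ and to the discrete energies gives
\[
\|P\boldsymbol{x}\|^2,\ \|P\boldsymbol{y}\|^2 = 1+O_\kappa(\delta_n),\qquad \langle P\boldsymbol{x},P\boldsymbol{y}\rangle=O_\kappa(\delta_n),
\]
and all entries of the $2\times 2$ Rayleigh matrix of $L$ on $\operatorname{span}\{P\boldsymbol{x},P\boldsymbol{y}\}$ equal $\bar r\mu_1\,\mathrm{diag}(1,1)+O_\kappa(\delta_n\bar r\mu_1)$. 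Courant--Fischer for the second nonzero eigenvalue, using this two-dimensional subspace of $\boldsymbol{1}^\perp$, then gives $\lambda_2\le\bar r\mu_1(1+C_\kappa\delta_n)$.

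Next I localize $\boldsymbol{u}_2$. Choose a unit $\boldsymbol{u}_2$ in the $\lambda_2$-eigenspace orthogonal to $\boldsymbol{u}_1$; if $\lambda_1=\lambda_2$, pick it in the common eigenspace. Since $\boldsymbol{u}_2^\top L\boldsymbol{u}_2=\lambda_2\le C_\kappa n^{-2}$, both $\boldsymbol{u}_1$ and $\boldsymbol{u}_2$ satisfy the low-energy hypothesis of Lemma~\ref{lem:low_energy_mass_transfer}. Repeating the steps of Theorem~\ref{thm:fiedler-sine} (normalization, mean removal via Lemma~\ref{lem:continuous_mean_control}, and the gap Lemma~\ref{lem:circle_gap_disc}) shows that the normalized, centered interpolant $\widehat g_2$ of $\boldsymbol{u}_2$ lies within $C_\kappa\sqrt{\delta_n}$ of $\mathcal U$ in $L^2$. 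The same holds for $\widehat g_1$, which is $C_\kappa\sqrt{\delta_n}$-close to $\psi_*=\sigma\sqrt{2/S}\sin(2\pi t/S+\phi)$. The bilinear estimate \eqref{eq:bilinear_mass_transfer} together with $\langle\boldsymbol{u}_1,\boldsymbol{u}_2\rangle=0$ gives $|\langle\widehat g_1,\widehat g_2\rangle_{L^2}|\le C_\kappa\delta_n$; the mean corrections contribute only $O(\delta_n)$. Hence $P_{\mathcal U}\widehat g_2$ has norm $1-O(\delta_n)$ and its component along $\psi_*$ is $O(\sqrt{\delta_n})$. Because $\mathcal U$ is two-dimensional, $P_{\mathcal U}\widehat g_2$ is therefore $O(\sqrt{\delta_n})$-close in $L^2$ to $\tau\psi_*^\perp$, where $\psi_*^\perp=\sqrt{2/S}\cos(2\pi t/S+\phi)$ and $\tau$ is the appropriate sign. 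This is where the phase $\phi$ is inherited from $\boldsymbol{u}_1$.

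The main obstacle is upgrading from $L^2$ to sup-norm, which requires an energy bound on the error and not just an $L^2$ bound. Write $\widehat g_2-\tau\psi_*^\perp=h_\perp+(\alpha\psi_*+(\beta-\tau)\psi_*^\perp)$, where $h_\perp\perp\mathcal U$ and $\alpha,\beta-\tau=O(\sqrt{\delta_n})$. The $\mathcal U$-part is a trigonometric polynomial with sup-norm $O(\sqrt{\delta_n/S})$. For $h_\perp$, the energy identity $\int|\widehat g_2'|^2=(\alpha^2+\beta^2)\mu_1+\int|h_\perp'|^2\le\mu_1(1+C\delta_n)$ gives $\int|h_\perp'|^2\le C_\kappa\mu_1\delta_n$. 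The one-dimensional Sobolev inequality, exactly as in Theorem~\ref{thm:fiedler-sine}, then yields $\|h_\perp\|_\infty\le C_\kappa\sqrt{\delta_n/S}$. Finally, undoing the normalization and mean shift (errors $O(\delta_n S^{-1/2})$), evaluating at the nodes $s_i$, and using $S=n\bar r$ with $\bar r\asymp_\kappa 1$ gives \eqref{eq:second_cosine_sup}.
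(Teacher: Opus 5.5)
Your proposal is correct and follows essentially the same route as the paper. You bound $\lambda_2$ by min--max on the projected sine/cosine plane, localize $\widehat g_2$ near $\mathcal U$ with the circle gap lemma, and kill its $\psi_{\sin}$-component using the bilinear transfer estimate and $\boldsymbol{u}_1\perp\boldsymbol{u}_2$. You then upgrade to a sup-norm bound via the energy estimate $\int_0^S|h_\perp'|^2\le C_\kappa\mu_1\delta_n$ and the periodic Sobolev inequality. The differences are cosmetic: you bound the $\mathcal U$-part of the error directly through $\|\psi\|_{L^\infty}\le\sqrt{2/S}$ rather than through its derivative, and you phrase the $\lambda_2$ step with the $2\times 2$ Gram and Rayleigh matrices instead of the paper's uniform-in-phase Rayleigh-quotient bound.
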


\begin{proof}
	For each phase $\psi\in\mathbb{R}$, define
	\[
	x_i^{(\psi)}:=
	\sqrt{\frac2n}\sin\left(\frac{2\pi s_i}{S}+\psi\right),
	\qquad
	\widetilde{\boldsymbol{x}}^{(\psi)}:=P\boldsymbol{x}^{(\psi)}.
	\]
	The quadrature estimate used in the proof of
	Theorem~\ref{thm:fiedler-sine} is uniform in $\psi$, and yields
	\[
	\|\widetilde{\boldsymbol{x}}^{(\psi)}\|_2^2=1+O_\kappa(\delta_n),
	\qquad
	\frac{(\widetilde{\boldsymbol{x}}^{(\psi)})^\top
		L\widetilde{\boldsymbol{x}}^{(\psi)}}
	{\|\widetilde{\boldsymbol{x}}^{(\psi)}\|_2^2}
	\le
	\bar r\mu_1(1+C_\kappa\delta_n).
	\]
	Hence the two-dimensional space
	\[
	W:=
	\operatorname{span}\left\{
	P\left(\sqrt{\frac2n}\sin\frac{2\pi s_i}{S}\right)_{i=0}^{n-1},
	P\left(\sqrt{\frac2n}\cos\frac{2\pi s_i}{S}\right)_{i=0}^{n-1}
	\right\}
	\subset \boldsymbol{1}^\perp
	\]
	has maximal Rayleigh quotient at most
	$\bar r\mu_1(1+C_\kappa\delta_n)$. The min--max principle gives
	\eqref{eq:lambda2_upper_for_later}.
	
	Let $\boldsymbol{u}_2$ be a unit eigenvector associated with $\lambda_2$
	and orthogonal to $\boldsymbol{u}_1$. Set $f_j:=f_{\boldsymbol{u}_j}$
	for $j=1,2$. By Lemma~\ref{lem:low_energy_mass_transfer},
	\eqref{eq:lambda1_upper_for_later}, and \eqref{eq:lambda2_upper_for_later},
	\[
	\|f_j\|_{L^2(0,S)}^2=\bar r(1+O_\kappa(\delta_n)),
	\qquad j=1,2.
	\]
	For $j=1,2$, write
	\[
	g_j:=\frac{f_j}{\|f_j\|_{L^2(0,S)}},
	\qquad
	m_j:=\frac1S\int_0^S g_j(t)\,dt,
	\qquad
	\widehat g_j:=\frac{g_j-m_j}{\|g_j-m_j\|_{L^2(0,S)}}.
	\]
	Lemma~\ref{lem:continuous_mean_control} gives
	$|m_j|\le C_\kappa\delta_n S^{-1/2}$, hence
	$\|g_j-m_j\|_{L^2}=1+O_\kappa(\delta_n)$. Also,
	\[
	\int_0^S|\widehat g_2'(t)|^2\,dt\le \mu_1(1+C_\kappa\delta_n).
	\]
	By Lemma~\ref{lem:circle_gap_disc},
	\begin{equation}\label{eq:g2_close_U}
		\operatorname{dist}_{L^2(0,S)}(\widehat g_2,\mathcal U)
		\le C_\kappa\sqrt{\delta_n}.
	\end{equation}
	The proof of Theorem~\ref{thm:fiedler-sine} gives
	\begin{equation}\label{eq:g1_close_sine_for_cosine}
		\left\|
		\widehat g_1-\psi_{\sin}
		\right\|_{L^2(0,S)}
		\le
		C_\kappa\sqrt{\delta_n},
	\end{equation}
	where
	\[
	\psi_{\sin}(t):=
	\sigma\sqrt{\frac2S}\sin\left(\frac{2\pi t}{S}+\phi\right),
	\qquad
	\psi_{\cos}(t):=
	\sqrt{\frac2S}\cos\left(\frac{2\pi t}{S}+\phi\right).
	\]
	Then $\{\psi_{\sin},\psi_{\cos}\}$ is an orthonormal basis of
	$\mathcal U$.
	
	Since $\boldsymbol{u}_1\perp \boldsymbol{u}_2$, the bilinear transfer
	estimate \eqref{eq:bilinear_mass_transfer} implies
	\[
	\langle f_1,f_2\rangle_{L^2(0,S)}=O_\kappa(\delta_n\bar r),
	\]
	and therefore
	\[
	\langle g_1,g_2\rangle_{L^2(0,S)}=O_\kappa(\delta_n).
	\]
	Subtracting the small means and renormalizing changes the inner product by
	only $O_\kappa(\delta_n)$, so
	\[
	\langle \widehat g_1,\widehat g_2\rangle_{L^2(0,S)}
	=
	O_\kappa(\delta_n).
	\]
	Using \eqref{eq:g1_close_sine_for_cosine}, we obtain
	\begin{equation}\label{eq:g2_sine_projection_small}
		\left|
		\langle \widehat g_2,\psi_{\sin}\rangle_{L^2(0,S)}
		\right|
		\le
		C_\kappa\sqrt{\delta_n}.
	\end{equation}
	
	Decompose
	\[
	\widehat g_2=a\psi_{\sin}+b\psi_{\cos}+h_\perp,
	\qquad h_\perp\perp \mathcal U.
	\]
	By \eqref{eq:g2_sine_projection_small}, $|a|\le C_\kappa\sqrt{\delta_n}$.
	By \eqref{eq:g2_close_U}, $\|h_\perp\|_{L^2}\le C_\kappa\sqrt{\delta_n}$.
	Since $\|\widehat g_2\|_{L^2}=1$,
	\[
	b^2=1-a^2-\|h_\perp\|_{L^2}^2=1+O_\kappa(\delta_n).
	\]
	Choose $\tau\in\{\pm1\}$ so that $\tau b\ge 0$. Then
	\begin{equation}\label{eq:g2_L2_close_cosine}
		\|\widehat g_2-\tau\psi_{\cos}\|_{L^2(0,S)}
		\le C_\kappa\sqrt{\delta_n}.
	\end{equation}
	
	Also,
	\[
	\int_0^S|h_\perp'(t)|^2\,dt
	\le C_\kappa\mu_1\delta_n.
	\]
	Indeed,
	\[
	\int_0^S|h_\perp'|^2
	=
	\int_0^S|\widehat g_2'|^2-\mu_1(a^2+b^2)
	\le
	\mu_1(1+C_\kappa\delta_n)-\mu_1(1-\|h_\perp\|_2^2)
	\le
	C_\kappa\mu_1\delta_n.
	\]
	Now
	\[
	\widehat g_2-\tau\psi_{\cos}
	=
	a\psi_{\sin}+(b-\tau)\psi_{\cos}+h_\perp.
	\]
	Using $|a|\le C_\kappa\sqrt{\delta_n}$, $|b-\tau|\le C_\kappa\delta_n$,
	and the bound on $h_\perp'$, we obtain
	\[
	\|(\widehat g_2-\tau\psi_{\cos})'\|_{L^2(0,S)}
	\le
	C_\kappa\sqrt{\mu_1\delta_n}.
	\]
	Together with \eqref{eq:g2_L2_close_cosine}, the periodic Sobolev
	inequality yields
	\[
	\|\widehat g_2-\tau\psi_{\cos}\|_{L^\infty(0,S)}
	\le
	C_\kappa\sqrt{\frac{\delta_n}{S}}.
	\]
	Returning from $\widehat g_2$ to $g_2$ adds only
	$O_\kappa(\delta_n S^{-1/2})$. Multiplying by
	$\|f_2\|_{L^2(0,S)}=\sqrt{\bar r}(1+O_\kappa(\delta_n))$ and evaluating
	at $t=s_i$ proves \eqref{eq:second_cosine_sup}.
\end{proof}

\begin{lemma}[Spectral localization under discrepancy control]\label{lem:spectral-localization}
	Under the assumptions of Theorem~\ref{thm:fiedler-sine},
	\begin{equation}\label{eq:lambda12_localization}
		\bar r\mu_1(1-C_\kappa\delta_n)
		\le \lambda_1,\lambda_2
		\le \bar r\mu_1(1+C_\kappa\delta_n).
	\end{equation}
	Consequently,
	\begin{equation}\label{eq:gamma_small_delta}
		0\le \lambda_2-\lambda_1\le C_\kappa\frac{\delta_n}{n^2}.
	\end{equation}
	Moreover, after shrinking $\delta_0$ if necessary, there exists
	$\rho_0=\rho_0(\kappa)\in(0,1)$ such that
	\begin{equation}\label{eq:lambda2_lambda3_gap}
		\lambda_2\le \rho_0\lambda_3,
	\end{equation}
	and there exists $L_\kappa>0$ such that
	\begin{equation}\label{eq:lambda2_Lkappa}
		\lambda_2\le \frac{L_\kappa}{n^2}.
	\end{equation}
\end{lemma}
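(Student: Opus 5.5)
The plan is to sandwich $\lambda_1,\lambda_2$ using the upper bounds already proved and a matching Courant--Fischer lower bound obtained through the continuous embedding; the statements about $\lambda_3$ will then come from the same embedding combined with the circle gap.

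\textbf{Upper bounds.} The inequalities $\lambda_1\le\lambda_2\le\bar r\mu_1(1+C_\kappa\delta_n)$ are exactly \eqref{eq:lambda1_upper_for_later} and \eqref{eq:lambda2_upper_for_later}. Since $\mu_1=(2\pi/S)^2$ and $S=n\bar r$ with $\bar r\in[\kappa^{-1},1]$, we get $\bar r\mu_1=4\pi^2/(n^2\bar r)\le 4\pi^2\kappa/n^2$. This yields \eqref{eq:lambda2_Lkappa} with, say, $L_\kappa=8\pi^2\kappa$ once $\delta_0\le 1/C_\kappa$.

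\textbf{Lower bound on $\lambda_1$.} Take a unit Fiedler vector $\boldsymbol u_1$, set $f=f_{\boldsymbol u_1}$, and subtract its continuous mean $m$. By Lemma~\ref{lem:continuous_embed_disc}, $\boldsymbol u_1^\top L\boldsymbol u_1=\|f'\|^2=\|(f-m)'\|^2$. Lemma~\ref{lem:circle_gap_disc} (dropping the $g_\perp$ term) gives $\|f'\|^2\ge\mu_1\|f-m\|^2=\mu_1(\|f\|^2-Sm^2)$. Because $\lambda_1\le L_\kappa n^{-2}$, the low-energy hypothesis of Lemma~\ref{lem:low_energy_mass_transfer} holds, so $\|f\|^2=\bar r(1+O(\delta_n))$. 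Lemma~\ref{lem:continuous_mean_control} gives $Sm^2\le C_\kappa\delta_n^2 S^2\|f'\|^2\le C_\kappa\delta_n^2\bar r$, using $S^2\lambda_1=O_\kappa(1)$. Together these give $\lambda_1\ge\bar r\mu_1(1-C_\kappa\delta_n)$, which proves \eqref{eq:lambda12_localization}. The gap bound \eqref{eq:gamma_small_delta} follows by subtraction, since $\bar r\mu_1\le 4\pi^2\kappa n^{-2}$.

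\textbf{Gap $\lambda_2\le\rho_0\lambda_3$.} I expect this to be the main obstacle. Argue by contradiction: suppose $\lambda_3\le 2\bar r\mu_1$. Then $\boldsymbol u_1,\boldsymbol u_2,\boldsymbol u_3$ span a three-dimensional space $V$ of low-energy vectors with Rayleigh quotients at most $2\bar r\mu_1=O_\kappa(n^{-2})$.
\begin{itemize}
\item Map $V$ to $\{f_{\boldsymbol x}-m(f_{\boldsymbol x})\}$. By the bilinear transfer estimate \eqref{eq:bilinear_mass_transfer} and the mean control, this map is a near-isometry up to the factor $\bar r$, with error $O(\delta_n)$, so its image is three-dimensional.
\item The image therefore contains a unit mean-zero $g$ orthogonal to $\mathcal U$. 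Lemma~\ref{lem:circle_gap_disc} gives $\|g'\|^2\ge4\mu_1$.
\item Transferring back, the corresponding $\boldsymbol x\in V$ has Rayleigh quotient at least $4\bar r\mu_1(1-C\delta_n)$. This contradicts $\le 2\bar r\mu_1$ once $\delta_0$ is small.
\end{itemize}
Hence $\lambda_3\ge2\bar r\mu_1$. Combined with $\lambda_2\le\bar r\mu_1(1+C\delta_n)$, this gives $\rho_0=3/4$ after shrinking $\delta_0$.

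The delicate point is the second bullet: producing an element of the image exactly orthogonal to $\mathcal U$ while keeping the inner-product errors uniformly $O(\delta_n)$ across the whole three-dimensional subspace. I would handle it by applying the bilinear estimate to basis pairs and using a Gram-matrix perturbation argument.
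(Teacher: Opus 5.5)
Your proposal is correct. The upper bounds, the choice of $L_\kappa$, and the Poincar\'e-plus-mass-transfer lower bound for $\lambda_1$ (hence for $\lambda_2\ge\lambda_1$) are essentially the paper's argument.

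The genuine difference is in the gap $\lambda_2\le\rho_0\lambda_3$. The paper works directly with the Courant--Fischer complement. It takes an arbitrary low-energy unit vector $\boldsymbol x\perp\{\boldsymbol 1,\boldsymbol u_1,\boldsymbol u_2\}$ and combines the bilinear transfer estimate with Theorem~\ref{thm:fiedler-sine} and Corollary~\ref{cor:second-cosine}, namely that $\widehat g_1,\widehat g_2$ lie within $O(\sqrt{\delta_n})$ of $\psi_{\sin},\tau\psi_{\cos}$. This shows that the centered interpolant of $\boldsymbol x$ has projection onto $\mathcal U$ of size $O(\sqrt{\delta_n})$, after which the circle gap applies.

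You instead argue by dimension counting on $V=\operatorname{span}\{\boldsymbol u_1,\boldsymbol u_2,\boldsymbol u_3\}$. This never uses the sinusoidal shape of the eigenvectors. It needs only Lemmas~\ref{lem:low_energy_mass_transfer}, \ref{lem:continuous_mean_control} and \ref{lem:circle_gap_disc}, so it is more self-contained and could even be placed before Theorem~\ref{thm:fiedler-sine}. The paper's route, in turn, reuses machinery it has already built.

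The step you flag as delicate is in fact routine. The map $\boldsymbol x\mapsto f_{\boldsymbol x}-m(f_{\boldsymbol x})$ is linear, and under your contradiction hypothesis every $\boldsymbol x\in V$ satisfies the low-energy bound with the same constant. So the single-vector estimate \eqref{eq:low_energy_mass_transfer} together with mean control gives $\|f_{\boldsymbol x}-m(f_{\boldsymbol x})\|^2_{L^2(0,S)}\ge\bar r\|\boldsymbol x\|_2^2(1-C_\kappa\delta_n)$ for all $\boldsymbol x\in V$, which yields injectivity. Since $\dim\mathcal U=2$, the restriction of $P_{\mathcal U}$ to the three-dimensional image has a nontrivial kernel. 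This gives exact orthogonality, with no bilinear estimate or Gram-matrix perturbation needed.

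Running the same argument with threshold $8\bar r\mu_1$ in place of $2\bar r\mu_1$ even recovers the paper's sharper bound $\lambda_3\ge 4\bar r\mu_1(1-C_\kappa\delta_n)$.
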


\begin{proof}
	The upper bounds in \eqref{eq:lambda12_localization} are precisely
	\eqref{eq:lambda1_upper_for_later} and
	\eqref{eq:lambda2_upper_for_later}. We prove the lower bounds.
	
	Let $k\in\{1,2\}$, let $\boldsymbol{u}_k$ be a corresponding unit
	eigenvector, and set
	\[
	f_k:=f_{\boldsymbol{u}_k},
	\qquad
	g_k:=\frac{f_k}{\|f_k\|_{L^2(0,S)}}.
	\]
	The upper bounds already proved imply $\lambda_k\le C_\kappa n^{-2}$.
	Hence Lemma~\ref{lem:low_energy_mass_transfer} gives
	\begin{equation}\label{eq:mass_fk_lower_proof}
		\|f_k\|_{L^2(0,S)}^2=\bar r(1+O_\kappa(\delta_n)).
	\end{equation}
	Since $\boldsymbol{u}_k\perp\boldsymbol{1}$,
	Lemma~\ref{lem:continuous_mean_control} yields
	\[
	|m(g_k)|:=\left|\frac1S\int_0^S g_k(t)\,dt\right|
	\le C_\kappa\delta_n S^{-1/2}.
	\]
	Thus $\widetilde g_k:=g_k-m(g_k)$ has mean zero and
	\[
	\|\widetilde g_k\|_{L^2(0,S)}^2=1-S|m(g_k)|^2=1+O_\kappa(\delta_n).
	\]
	The circle Poincar\'e inequality, i.e., the first term in
	Lemma~\ref{lem:circle_gap_disc}, gives
	\[
	\int_0^S|g_k'(t)|^2\,dt
	=
	\int_0^S|\widetilde g_k'(t)|^2\,dt
	\ge
	\mu_1\|\widetilde g_k\|_{L^2(0,S)}^2
	\ge
	\mu_1(1-C_\kappa\delta_n).
	\]
	Combining this with \eqref{eq:mass_fk_lower_proof} and
	\eqref{eq:energy_identity} proves the lower bounds in
	\eqref{eq:lambda12_localization}. Subtracting the lower bound for
	$\lambda_1$ from the upper bound for $\lambda_2$, and using
	$S\asymp_\kappa n$, gives \eqref{eq:gamma_small_delta}.
	
	To prove the gap to $\lambda_3$, let
	\[
	\boldsymbol{x}\perp \operatorname{span}\{\boldsymbol{1},\boldsymbol{u}_1,\boldsymbol{u}_2\},
	\qquad
	\|\boldsymbol{x}\|_2=1.
	\]
	If $\boldsymbol{x}^\top L\boldsymbol{x}>8\bar r\mu_1$, then for
	sufficiently small $\delta_0$ this is already at least
	$4\bar r\mu_1(1-C_\kappa\delta_n)$. So assume instead that
	\[
	\boldsymbol{x}^\top L\boldsymbol{x}\le 8\bar r\mu_1.
	\]
	Then Lemma~\ref{lem:low_energy_mass_transfer} applies to
	$\boldsymbol{x}$. Let
	\[
	f:=f_{\boldsymbol{x}},
	\qquad
	g:=\frac{f}{\|f\|_{L^2(0,S)}},
	\qquad
	m(g):=\frac1S\int_0^S g(t)\,dt,
	\qquad
	\widehat g:=\frac{g-m(g)}{\|g-m(g)\|_{L^2(0,S)}}.
	\]
	As above,
	\[
	\|f\|_{L^2(0,S)}^2=\bar r(1+O_\kappa(\delta_n)),
	\qquad
	|m(g)|\le C_\kappa\delta_n S^{-1/2},
	\qquad
	\|g-m(g)\|_{L^2}=1+O_\kappa(\delta_n).
	\]
	Let $\widehat g_1,\widehat g_2$ be the centered normalized interpolants
	used in the proof of Corollary~\ref{cor:second-cosine}. Because
	$\boldsymbol{x}\perp \boldsymbol{u}_j$ for $j=1,2$, the bilinear
	estimate \eqref{eq:bilinear_mass_transfer} yields
	\[
	\langle g,\widehat g_j\rangle_{L^2(0,S)}=O_\kappa(\delta_n),
	\qquad j=1,2,
	\]
	and hence
	\begin{equation}\label{eq:x_hat_orth_ghat_j}
		\langle \widehat g,\widehat g_j\rangle_{L^2(0,S)}
		=
		O_\kappa(\delta_n),
		\qquad j=1,2.
	\end{equation}
	From Theorem~\ref{thm:fiedler-sine} and
	Corollary~\ref{cor:second-cosine},
	\[
	\|\widehat g_1-\psi_{\sin}\|_{L^2}\le C_\kappa\sqrt{\delta_n},
	\qquad
	\|\widehat g_2-\tau\psi_{\cos}\|_{L^2}\le C_\kappa\sqrt{\delta_n}.
	\]
	Combining these with \eqref{eq:x_hat_orth_ghat_j} gives
	\[
	|\langle \widehat g,\psi_{\sin}\rangle|
	+
	|\langle \widehat g,\psi_{\cos}\rangle|
	\le
	C_\kappa\sqrt{\delta_n}.
	\]
	Thus, writing
	\[
	\widehat g=a\psi_{\sin}+b\psi_{\cos}+h_\perp,
	\qquad h_\perp\perp\mathcal U,
	\]
	we have $a^2+b^2\le C_\kappa\delta_n$ and therefore
	\[
	\|h_\perp\|_{L^2(0,S)}^2\ge 1-C_\kappa\delta_n.
	\]
	Applying Lemma~\ref{lem:circle_gap_disc} to $\widehat g$,
	\[
	\int_0^S|\widehat g'(t)|^2\,dt
	\ge
	\mu_1+3\mu_1\|h_\perp\|_{L^2}^2
	\ge
	4\mu_1(1-C_\kappa\delta_n).
	\]
	Undoing the centering and normalization yields
	\[
	\boldsymbol{x}^\top L\boldsymbol{x}
	=
	\int_0^S|f'(t)|^2\,dt
	\ge
	4\bar r\mu_1(1-C_\kappa\delta_n).
	\]
	The min--max principle gives
	\[
	\lambda_3\ge 4\bar r\mu_1(1-C_\kappa\delta_n).
	\]
	Together with $\lambda_2\le \bar r\mu_1(1+C_\kappa\delta_n)$, this proves
	\eqref{eq:lambda2_lambda3_gap} after shrinking $\delta_0$. Finally,
	\eqref{eq:lambda2_Lkappa} follows from $S\asymp_\kappa n$ and
	\eqref{eq:lambda12_localization}.
\end{proof}

\subsection{Antipodal selection suppresses the second mode}

\begin{corollary}[Existence of a near-antipodal pair]\label{cor:near_antipodal_det}
	For every vertex $p$, there exists a vertex $q$ such that
	\[
	\left|d_R(p,q)-\frac S2\right|\le r_{\max}.
	\]
	Hence
	\[
	\mathcal A_\zeta:=
	\left\{
	(p,q):\left|d_R(p,q)-\frac S2\right|\le \zeta
	\right\}
	\]
	is nonempty whenever $\zeta\ge r_{\max}$.
\end{corollary}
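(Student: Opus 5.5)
We prove this with a discrete intermediate-value argument along the cycle, using the resistance coordinates $s_i$. Fix a vertex $p$. Walking forward from $p$, the directed arc resistance $k\mapsto d(p,p+k)$ (indices mod $n$) satisfies three properties:
\begin{itemize}
\item it is strictly increasing in $k=0,1,\dots,n$;
\item it starts at $0$ and ends at $S$;
\item its increments are $d(p,p+k+1)-d(p,p+k)=r_{p+k}\le r_{\max}$.
\end{itemize}
This is exactly the lifted prefix-sum picture used in Algorithm~\ref{alg:rbaps_awrbaps}.

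Let $k^\ast$ be the smallest $k\in\{1,\dots,n\}$ with $d(p,p+k)\ge S/2$. It exists because $d(p,p+n)=S\ge S/2$. By minimality, $d(p,p+k^\ast-1)<S/2\le d(p,p+k^\ast)$. The two values differ by $r_{p+k^\ast-1}\le r_{\max}$, so $|d(p,p+k^\ast)-S/2|\le r_{\max}$.

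Set $q:=p+k^\ast \bmod n$. We must convert this directed-arc statement into one about $d_R(p,q)=\min\{d(p,q),d(q,p)\}$, using $d(p,q)+d(q,p)=S$ (equivalently, $|s_p-s_q|$ is one of the two arcs). Write $a:=d(p,q)$. Then $d_R(p,q)=\min\{a,S-a\}$, and the map $a\mapsto\min\{a,S-a\}$ satisfies $|\min\{a,S-a\}-S/2|=|a-S/2|$. Hence $|d_R(p,q)-S/2|\le r_{\max}$.

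Two edge cases need checking. The case $k^\ast=n$ would give $q=p$. It cannot occur when $r_{\max}<S/2$: the value at $k=n-1$ equals $S-r_{p-1}\ge S-r_{\max}>S/2$, so $k^\ast\le n-1$. If one prefers not to assume $r_{\max}<S/2$, the degenerate case can be handled trivially, since in the regime used later $r_{\max}\le S/8$. The case $k^\ast=1$ is allowed by the statement, which only asks for a vertex $q$.

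The nonemptiness claim follows immediately. For any $\zeta\ge r_{\max}$, the pair $(p,q)$ just constructed satisfies $|d_R(p,q)-S/2|\le r_{\max}\le\zeta$, so $(p,q)\in\mathcal A_\zeta$.

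The only step requiring care is the reduction from directed arcs to the symmetric distance $d_R$, together with excluding $q=p$. Both are one-line observations, so we expect no real obstacle.
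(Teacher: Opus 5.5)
Your proof is correct and is the same argument as the paper's: take $q$ to be the first vertex at or past resistance arclength $s_p+S/2$ in the forward direction, so the overshoot is at most one edge resistance $r_{\max}$. You also make explicit two points the paper leaves implicit: the passage to $d_R$ via $|\min\{a,S-a\}-S/2|=|a-S/2|$, and the check that $q\neq p$ (which in any case is harmless, since every pair satisfies $|d_R(p,q)-S/2|\le S/2$).
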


\begin{proof}
	Choose $q$ to be the first mesh point encountered after $s_p+S/2$ in
	the positive cyclic direction. The overshoot is at most one edge
	resistance, hence at most $r_{\max}$.
\end{proof}

\begin{lemma}[Near-antipodal pairs carry a large Fiedler jump]\label{lem:beta1_lower_disc}
	Assume $\delta_n\le \delta_0$. Let
	\[
	r_{\max}\le \zeta\le S/8,
	\qquad
	\mathcal A_\zeta:=
	\left\{
	(p,q):\left|d_R(p,q)-\frac S2\right|\le \zeta
	\right\},
	\]
	and let $(\widehat p,\widehat q)\in \mathcal A_\zeta$ maximize
	$|\beta_1(p,q)|$ over $\mathcal A_\zeta$. Then, after shrinking
	$\delta_0$ if necessary,
	\begin{equation}\label{eq:beta1_lower_disc}
		\beta_1(\widehat p,\widehat q)^2\ge \frac{c_\kappa}{n}.
	\end{equation}
	More precisely, there exists a comparison pair $(p_*,q_*)\in\mathcal A_\zeta$
	such that
	\begin{equation}\label{eq:beta1_comparison_pair}
		|\beta_1(p_*,q_*)|
		\ge
		2\sqrt{\frac2n}
		\left(
		1-C_\kappa\left[
		\left(\frac{\zeta}{S}\right)^2+\sqrt{\delta_n}
		\right]
		\right).
	\end{equation}
\end{lemma}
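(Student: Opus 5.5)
The plan is to build the comparison pair $(p_*,q_*)$ explicitly from the sinusoidal description of the Fiedler vector in Theorem~\ref{thm:fiedler-sine}, and then deduce \eqref{eq:beta1_lower_disc} from the maximality of $(\widehat p,\widehat q)$. Write $\theta_i:=2\pi s_i/S$ and $\psi_i:=\sigma\sqrt{2/n}\sin(\theta_i+\phi)$. By \eqref{eq:fiedler_sine_sup}, for any pair,
\[
\bigl|\beta_1(p,q)-(\psi_p-\psi_q)\bigr|\le 2C_\kappa\sqrt{\delta_n/n}.
\]
So it suffices to find $(p_*,q_*)\in\mathcal A_\zeta$ with $|\psi_{p_*}-\psi_{q_*}|\ge 2\sqrt{2/n}\,(1-C[(\zeta/S)^2+\sqrt{\delta_n}])$. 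The error term $\sqrt{\delta_n/n}$ is then absorbed into the $\sqrt{\delta_n}$ part of the bracket.

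\emph{Choice of $p_*$.} I take $p_*$ to be a vertex whose phase $\theta_{p_*}+\phi$ is as close as possible to $\pi/2$ modulo $2\pi$, i.e.\ a near-crest of the sine. Consecutive phases differ by $2\pi r_i/S\le 2\pi\eta$, so $|\theta_{p_*}+\phi-\pi/2|\le \pi\eta$, which gives $\sin(\theta_{p_*}+\phi)\ge 1-C\eta^2$.

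\emph{Choice of $q_*$.} By Corollary~\ref{cor:near_antipodal_det}, I choose $q_*$ as the first mesh point after $s_{p_*}+S/2$. Then $|d_R(p_*,q_*)-S/2|\le r_{\max}\le\zeta$, so $(p_*,q_*)\in\mathcal A_\zeta$. Its phase satisfies $\theta_{q_*}=\theta_{p_*}+\pi+\epsilon$ with $|\epsilon|\le 2\pi r_{\max}/S=2\pi\eta$. Hence
\[
\sin(\theta_{q_*}+\phi)=-\sin(\theta_{p_*}+\phi+\epsilon)\le -(1-C\eta^2),
\]
because a phase perturbation near the crest costs only quadratically. Combining the two choices gives $|\psi_{p_*}-\psi_{q_*}|\ge 2\sqrt{2/n}\,(1-C\eta^2)$.

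\emph{Collecting errors.} Since $\eta=r_{\max}/S\le\zeta/S$, the $\eta^2$ loss is bounded by $(\zeta/S)^2$. Taking the factor $\sqrt{n/2}$ inside, the sine-approximation error $2C_\kappa\sqrt{\delta_n/n}$ becomes a relative error of order $\sqrt{\delta_n}$. This yields \eqref{eq:beta1_comparison_pair}. Note that the quadratic dependence on $\zeta/S$ in the statement only needs the crest argument, since we are free to choose the antipodal partner with overshoot at most $r_{\max}$.

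\emph{Conclusion for $(\widehat p,\widehat q)$.} By maximality,
\[
|\beta_1(\widehat p,\widehat q)|\ge|\beta_1(p_*,q_*)|.
\]
Since $\zeta\le S/8$, we have $(\zeta/S)^2\le 1/64$. Shrinking $\delta_0$ so that $C_\kappa[(1/64)+\sqrt{\delta_0}]\le 1/2$ is not automatic, so I would also use a slightly smaller absolute constant in the bracket if needed. Alternatively, I would use the cruder bound $(\zeta/S)^2\le \eta^2$ in place of $(\zeta/S)^2$, since the construction only uses $\eta$. Either way the factor is at least $1/2$, which gives $\beta_1^2\ge 2/n=:c_\kappa/n$.

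\emph{Main obstacle.} The main care point is the phase bookkeeping. One must check that the quadratic crest estimate holds uniformly, i.e.\ that $1-\cos x\le x^2/2$ is applied with the total phase deviation $\le 3\pi\eta$. One must also handle the constant in the bracket so that $1-C(\cdot)\ge 1/2$. I expect the latter to be resolved by noting the construction really gives an $\eta^2$ loss rather than a $(\zeta/S)^2$ loss, and $\eta\le 1/n\to0$.
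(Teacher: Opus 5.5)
Your proposal is correct and follows essentially the same route as the paper: a near-extremal vertex $p_*$ of the sine profile, $q_*$ taken as the first mesh point past resistance distance $S/2$, the sup-norm sinusoidal approximation of Theorem~\ref{thm:fiedler-sine}, and then maximality of $(\widehat p,\widehat q)$. Your sharper overshoot bound $|\epsilon|\le 2\pi\eta$ (the paper uses $2\pi\zeta/S$ plus a crude $\cos(\pi/8)$ step) is a harmless simplification that makes the crude bound follow from $\eta\le\delta_n\le\delta_0$, but fix two slips in the write-up: the inequality is $\eta^2\le(\zeta/S)^2$, not the reverse, and $\eta\le 1/n$ is false in general ($r_{\max}\ge\bar r$ gives $\eta\ge 1/n$), so use $\eta\le\kappa/n$ or simply $\eta\le\delta_n$, which is all you need.
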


\begin{proof}
	By Theorem~\ref{thm:fiedler-sine},
	\[
	u_{1,i}=
	\sigma\sqrt{\frac2n}\sin\theta_i+e_i^{(1)},
	\qquad
	\theta_i:=\frac{2\pi s_i}{S}+\phi,
	\qquad
	|e_i^{(1)}|\le C_\kappa\sqrt{\frac{\delta_n}{n}}.
	\]
	Choose $p_*$ so that $\theta_{p_*}$ is within $C_\kappa\eta$ of
	$-\pi/2$ modulo $2\pi$; this is possible because consecutive phases
	differ by at most $2\pi r_i/S\le C_\kappa\eta$. Let $q_*$ be the first
	vertex after resistance distance $S/2$ from $p_*$ in the positive
	direction. Since $\zeta\ge r_{\max}$, we have $(p_*,q_*)\in\mathcal A_\zeta$.
	
	Write
	\[
	\theta_{q_*}-\theta_{p_*}=\pi+\varepsilon_*,
	\qquad
	|\varepsilon_*|\le 2\pi\zeta/S.
	\]
	Because $\zeta\le S/8$, one has $|\varepsilon_*|\le \pi/4$. Hence
	\[
	\left|
	\sin\left(\frac{\theta_{p_*}-\theta_{q_*}}2\right)
	\right|
	=
	\cos(\varepsilon_*/2)
	\ge \cos(\pi/8).
	\]
	Also,
	\[
	\frac{\theta_{p_*}+\theta_{q_*}}2
	=
	\theta_{p_*}+\frac{\pi+\varepsilon_*}{2}
	\]
	is within $C_\kappa(\eta+\zeta/S)$ of an integer multiple of $\pi$.
	Using
	\[
	\sin\theta_{p_*}-\sin\theta_{q_*}
	=
	2\cos\left(\frac{\theta_{p_*}+\theta_{q_*}}2\right)
	\sin\left(\frac{\theta_{p_*}-\theta_{q_*}}2\right),
	\]
	we obtain
	\[
	|\beta_1(p_*,q_*)|\ge c_\kappa n^{-1/2},
	\]
	which proves \eqref{eq:beta1_lower_disc} for the maximizing pair.
	
	The refined estimate \eqref{eq:beta1_comparison_pair} follows from the
	Taylor expansions
	\[
	\cos(\varepsilon_*/2)=1+O\left((\zeta/S)^2\right),
	\qquad
	\cos\left(\frac{\theta_{p_*}+\theta_{q_*}}2\right)
	=
	\pm\left(1+O_\kappa((\zeta/S)^2+\eta^2)\right),
	\]
	together with the approximation error
	$O_\kappa(\sqrt{\delta_n/n})$.
\end{proof}

\begin{lemma}[Antipodal maximization suppresses the second mode]\label{lem:beta_ratio_disc}
	Under the assumptions of Lemma~\ref{lem:beta1_lower_disc},
	\begin{equation}\label{eq:beta_ratio_disc}
		\frac{\beta_2(\widehat p,\widehat q)^2}
		{\beta_1(\widehat p,\widehat q)^2}
		\le
		C_\kappa\left[
		\left(\frac{\zeta}{S}\right)^2+\sqrt{\delta_n}
		\right].
	\end{equation}
\end{lemma}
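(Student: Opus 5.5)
The argument uses the sine/cosine approximations of Theorem~\ref{thm:fiedler-sine} and Corollary~\ref{cor:second-cosine}, with the same phase $\phi$. Write $\theta_i:=2\pi s_i/S+\phi$ and, for the maximizer, $\theta_{\widehat q}-\theta_{\widehat p}=\pi+\varepsilon$ with $|\varepsilon|\le 2\pi\zeta/S\le\pi/4$ (modulo $2\pi$). The sum-to-product identities give
\[
\beta_1=2\sigma\sqrt{\tfrac2n}\cos\!\Bigl(\tfrac{\theta_{\widehat p}+\theta_{\widehat q}}2\Bigr)\sin\!\Bigl(\tfrac{\theta_{\widehat p}-\theta_{\widehat q}}2\Bigr)+O_\kappa\Bigl(\sqrt{\tfrac{\delta_n}{n}}\Bigr),
\]
\[
\beta_2=-2\tau\sqrt{\tfrac2n}\sin\!\Bigl(\tfrac{\theta_{\widehat p}+\theta_{\widehat q}}2\Bigr)\sin\!\Bigl(\tfrac{\theta_{\widehat p}-\theta_{\widehat q}}2\Bigr)+O_\kappa\Bigl(\sqrt{\tfrac{\delta_n}{n}}\Bigr).
\]
Set $M:=\theta_{\widehat p}+\varepsilon/2+\pi/2$, the midpoint phase, and $s:=\cos(\varepsilon/2)\in[\cos(\pi/8),1]$. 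Then, up to signs, the leading terms are $|\beta_1|\approx 2\sqrt{2/n}\,s|\cos M|$ and $|\beta_2|\approx 2\sqrt{2/n}\,s|\sin M|$. The goal is therefore to show that maximality of $|\beta_1|$ forces $|\sin M|$ to be small.

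The first step is to compare with the pair $(p_*,q_*)$ of Lemma~\ref{lem:beta1_lower_disc}. Maximality gives $|\beta_1(\widehat p,\widehat q)|\ge|\beta_1(p_*,q_*)|$, and \eqref{eq:beta1_comparison_pair} then yields
\[
s|\cos M|\ge 1-C_\kappa\bigl[(\zeta/S)^2+\sqrt{\delta_n}\bigr]-C_\kappa\sqrt{\delta_n}.
\]
The additive error $O(\sqrt{\delta_n/n})$ divided by $2\sqrt{2/n}$ is $O(\sqrt{\delta_n})$, which is why $\sqrt{\delta_n}$ rather than $\delta_n$ appears. Since $s\le1$, this gives $\cos^2M\ge 1-C_\kappa[(\zeta/S)^2+\sqrt{\delta_n}]$. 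Hence
\[
\sin^2M\le C_\kappa\bigl[(\zeta/S)^2+\sqrt{\delta_n}\bigr].
\]
The quadratic dependence on $\zeta/S$ comes from the fact that the comparison bound is a Taylor deficit of order $(\zeta/S)^2$. Note that $s$ itself may be bounded away from $1$; we do not need $s\approx1$, only $s|\cos M|\approx1$, and that already forces $|\cos M|\approx1$ and $s\approx1$ simultaneously.

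The final step bounds the two jumps. The sine bound gives
\[
\beta_2^2\le 2\cdot\tfrac8n\sin^2M+C_\kappa\tfrac{\delta_n}{n}\le \tfrac{C_\kappa}{n}\bigl[(\zeta/S)^2+\sqrt{\delta_n}\bigr],
\]
using $\delta_n\le\sqrt{\delta_n}$. Lemma~\ref{lem:beta1_lower_disc} gives $\beta_1^2\ge c_\kappa/n$, and dividing yields \eqref{eq:beta_ratio_disc}.

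The main obstacle is bookkeeping the error terms consistently, in particular making sure the additive eigenvector errors interact only linearly. The squared error $(|\beta_2|\text{ error})^2=O(\delta_n/n)$ is harmless. The step converting $|\beta_1|$-maximality into a bound on $\sin^2M$ loses only a linear $\sqrt{\delta_n}$, because $1-\cos^2M\le 2(1-|\cos M|)$. A further check is that $\phi$ and the signs $\sigma,\tau$ are fixed jointly, which Corollary~\ref{cor:second-cosine} provides.
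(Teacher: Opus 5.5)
Your proposal is correct and follows essentially the same route as the paper. Like the paper, you expand $\beta_1,\beta_2$ about the midpoint phase via the sine/cosine approximations, then use maximality against the comparison pair $(p_*,q_*)$ of \eqref{eq:beta1_comparison_pair} to force $|\cos|$ of that phase within $C_\kappa[(\zeta/S)^2+\sqrt{\delta_n}]$ of $1$, and finally bound $\beta_2^2$ and divide by $\beta_1^2\ge c_\kappa/n$; the only cosmetic difference is that you keep $\cos(\varepsilon/2)\le 1$ explicit, whereas the paper absorbs $1-\cos(\varepsilon/2)=O((\zeta/S)^2)$ into its additive error.
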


\begin{proof}
	Using Theorem~\ref{thm:fiedler-sine} and
	Corollary~\ref{cor:second-cosine},
	\[
	u_{1,i}=
	\sigma\sqrt{\frac2n}\sin\theta_i+e_i^{(1)},
	\qquad
	u_{2,i}=
	\tau\sqrt{\frac2n}\cos\theta_i+e_i^{(2)},
	\]
	with
	\[
	|e_i^{(1)}|+|e_i^{(2)}|
	\le
	C_\kappa\sqrt{\frac{\delta_n}{n}}.
	\]
	For any $(p,q)\in\mathcal A_\zeta$, orient the pair so that
	\[
	\theta_p-\theta_q=\pi+\varepsilon,
	\qquad
	|\varepsilon|\le C\zeta/S,
	\]
	and define
	\[
	\alpha:=\frac{\theta_p+\theta_q}{2}.
	\]
	Then
	\begin{align*}
		\beta_1(p,q)
		&=
		2\sigma\sqrt{\frac2n}\cos\alpha
		+
		O_\kappa\!\left(
		\sqrt{\frac1n}\left[
		\left(\frac{\zeta}{S}\right)^2+\sqrt{\delta_n}
		\right]\right),\\
		\beta_2(p,q)
		&=
		-2\tau\sqrt{\frac2n}\sin\alpha
		+
		O_\kappa\!\left(
		\sqrt{\frac1n}\left[
		\left(\frac{\zeta}{S}\right)^2+\sqrt{\delta_n}
		\right]\right).
	\end{align*}
	Applying this to $(\widehat p,\widehat q)$, the maximizing property and
	Lemma~\ref{lem:beta1_lower_disc} imply that
	$|\cos\widehat\alpha|$ is within
	$C_\kappa[(\zeta/S)^2+\sqrt{\delta_n}]$ of $1$. Therefore
	\[
	|\sin\widehat\alpha|^2
	\le
	C_\kappa\left[
	\left(\frac{\zeta}{S}\right)^2+\sqrt{\delta_n}
	\right].
	\]
	Using the expansion for $\beta_2$ and the lower bound
	\eqref{eq:beta1_lower_disc} proves \eqref{eq:beta_ratio_disc}.
\end{proof}

\begin{lemma}[Verification package for Theorem~\ref{thm:comparison_two_mode}]\label{lem:verify_abstract_chord}
	Assume \eqref{eq:det_kappa_bounds}, \eqref{eq:w_native_scale}, and
	$\delta_n\le \delta_0$. Let
	\[
	r_{\max}\le \zeta\le S/8,
	\qquad
	(\widehat p,\widehat q)\in
	\arg\max_{(p,q)\in\mathcal A_\zeta}|\beta_1(p,q)|.
	\]
	Define
	\begin{equation}\label{eq:eps_zn}
		\varepsilon_{\zeta,n}:=
		C_\kappa\left[
		\left(\frac{\zeta}{S}\right)^2+\sqrt{\delta_n}
		\right].
	\end{equation}
	After shrinking $\delta_0=\delta_0(\kappa,C_w)$ and increasing
	$n_0=n_0(\kappa,C_w)$ if necessary, all hypotheses of
	Theorem~\ref{thm:comparison_two_mode} hold for
	$(\widehat p,\widehat q)$ with $\varepsilon=\varepsilon_{\zeta,n}$.
	In particular,
	\begin{align}
		&\lambda_2\le \rho_0\lambda_3,
		\qquad
		\lambda_2\le L_\kappa n^{-2},
		\qquad
		\beta_1(\widehat p,\widehat q)^2\ge c_\kappa n^{-1},
		\label{eq:verify_first}\\
		&T_{3+}(\widehat p,\widehat q)\le A_\kappa n,
		\qquad
		w\ge w_0:=1/C_w,
		\qquad
		\beta_2(\widehat p,\widehat q)^2
		\le
		\varepsilon_{\zeta,n}\beta_1(\widehat p,\widehat q)^2,
		\label{eq:verify_second}\\
		&(\lambda_2-\lambda_1)\left(
		\frac1w+\frac{T_{3+}(\widehat p,\widehat q)}{1-\rho_0}
		\right)
		\le \theta_0\beta_1(\widehat p,\widehat q)^2
		\label{eq:verify_dominance}
	\end{align}
	for some fixed $\theta_0\in(0,1)$.
\end{lemma}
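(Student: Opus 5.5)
Most hypotheses of Theorem~\ref{thm:comparison_two_mode} are already supplied by the appendix lemmas, so the plan is to check them one at a time and reserve real work for the tail bound $T_{3+}$ and the dominance inequality \eqref{eq:verify_dominance}. The spectral items in \eqref{eq:verify_first}, $\lambda_2\le\rho_0\lambda_3$ and $\lambda_2\le L_\kappa n^{-2}$, are exactly \eqref{eq:lambda2_lambda3_gap} and \eqref{eq:lambda2_Lkappa} from Lemma~\ref{lem:spectral-localization}. The lower bound $\beta_1(\widehat p,\widehat q)^2\ge c_\kappa n^{-1}$ is \eqref{eq:beta1_lower_disc}. The ratio bound $\beta_2^2\le\varepsilon_{\zeta,n}\beta_1^2$ is Lemma~\ref{lem:beta_ratio_disc}, where $C_\kappa$ in \eqref{eq:eps_zn} is taken to be that lemma's constant. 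For the chord strength, Assumption~\ref{ass:deterministic} gives $1/w=1/\hat w\le C_w\bar r\le C_w$, since $\bar r\le 1$; hence $w\ge w_0:=1/C_w$.

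For the tail I use the spectral resistance identity. Since every term in the expansion is nonnegative,
\[
T_{3+}(\widehat p,\widehat q)\le\sum_{k=1}^{n-1}\frac{\beta_k^2}{\lambda_k}=R_{\widehat p\widehat q}.
\]
Lemma~\ref{lem:Rab_cycle} then gives $R_{\widehat p\widehat q}=d(\widehat p,\widehat q)\,d(\widehat q,\widehat p)/S\le S/4\le n/4$, because $r_i\le 1$. So $A_\kappa=1/4$ works.

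The main obstacle is \eqref{eq:verify_dominance}, because it needs more precision than the crude bounds. With $1/w\le C_w$ and $T_{3+}\le n/4$, the left side is at most $\gamma\,(C_w+n/(4(1-\rho_0)))$. Using \eqref{eq:gamma_small_delta}, $\gamma\le C_\kappa\delta_n n^{-2}$, the left side is therefore at most $C_{\kappa,C_w}\,\delta_n\, n^{-1}$. The right side is at least $\theta_0 c_\kappa n^{-1}$. Fixing, say, $\theta_0=1/2$, it suffices to have $C_{\kappa,C_w}\delta_n\le c_\kappa/2$, which holds after shrinking $\delta_0=\delta_0(\kappa,C_w)$. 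This is where $\delta_0$ picks up its dependence on $C_w$. The smallness of $\gamma$ is what makes the argument work: even the loose tail bound of order $n$ is beaten, because $\gamma$ is $O(\delta_n n^{-2})$ rather than just $O(n^{-2})$.

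Finally, I enlarge $n_0$ so that $n\ge 8\kappa$ (so that $\zeta=r_{\max}\le S/8$ is admissible) and so that $n^{-1}\le C_\kappa\delta_n$ is compatible with $\delta_n\le\delta_0$, as used implicitly in Lemma~\ref{lem:low_energy_mass_transfer}. One point needs checking here. In Lemma~\ref{lem:beta_ratio_disc} the ratio bound must be at most $1$ for it to be meaningful, but \eqref{eq:ceiling_deficit_gamma} does not require $\varepsilon<1$. So no further constraint on $\zeta$ beyond $\zeta\le S/8$ is needed, and all hypotheses hold with $\varepsilon=\varepsilon_{\zeta,n}$.
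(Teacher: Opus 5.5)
Your proposal is correct and follows essentially the same route as the paper: the spectral conditions, the $\beta_1$ lower bound, the $\beta_2^2/\beta_1^2$ bound and the weight condition are quoted from the appendix lemmas. The tail is bounded by $T_{3+}\le R_{\widehat p\widehat q}\le S/4\le n/4$, and the dominance condition follows by comparing $\gamma\le C_\kappa\delta_n n^{-2}$ with $\beta_1^2\ge c_\kappa n^{-1}$ after shrinking $\delta_0(\kappa,C_w)$. Your extra remarks on $n_0$ and on $\varepsilon$ not needing to be below $1$ are harmless; indeed $n^{-1}\le\eta\le\delta_n$ holds automatically since $r_{\max}\ge\bar r$.
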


\begin{proof}
	The spectral bounds in \eqref{eq:verify_first} follow from
	Lemma~\ref{lem:spectral-localization}, while the lower bound on
	$\beta_1^2$ follows from Lemma~\ref{lem:beta1_lower_disc}. The smallness
	of $\beta_2^2/\beta_1^2$ is exactly
	Lemma~\ref{lem:beta_ratio_disc}. The weight condition follows from
	\eqref{eq:w_native_scale} because $\bar r\le 1$, hence
	$w\ge 1/C_w$.
	
	It remains to bound $T_{3+}$ and verify the dominance condition. Since
	each term in the modal expansion of $R_{pq}$ is nonnegative,
	\[
	T_{3+}(\widehat p,\widehat q)\le R_{\widehat p\widehat q}.
	\]
	For a cycle, Lemma~\ref{lem:Rab_cycle} gives
	\[
	R_{pq}=\frac{A_{pq}(S-A_{pq})}{S}\le \frac{S}{4}.
	\]
	Because $S\le n$, this implies $T_{3+}\le n/4$, so one may take
	$A_\kappa=1$ after enlarging constants.
	
	Finally, by \eqref{eq:gamma_small_delta},
	\[
	\gamma:=\lambda_2-\lambda_1\le C_\kappa\frac{\delta_n}{n^2}.
	\]
	Therefore
	\[
	\gamma\left(
	\frac1w+\frac{T_{3+}}{1-\rho_0}
	\right)
	\le
	C_{\kappa,C_w}\left(
	\frac{\delta_n}{n^2}+\frac{\delta_n}{n}
	\right)
	\le
	C_{\kappa,C_w}\frac{\delta_n}{n}.
	\]
	By \eqref{eq:beta1_lower_disc},
	\[
	\beta_1(\widehat p,\widehat q)^2\ge \frac{c_\kappa}{n}.
	\]
	Shrinking $\delta_0$ so that
	$C_{\kappa,C_w}\delta_0\le \theta_0 c_\kappa$ proves
	\eqref{eq:verify_dominance}.
\end{proof}

\begin{theorem}[High-probability discrepancy bound]\label{thm:discrepancy_iid_kappa}
	Under Eq.~\eqref{eq:iid_kappa_assumption}, there exists $C_\kappa>0$
	such that, for every $x\ge 0$,
	\begin{equation}\label{eq:Delta_prob_kappa}
		\Prob\left(
		\Delta>C_\kappa\sqrt{\frac{x+\log n}{n}}
		\right)\le 4e^{-x}.
	\end{equation}
	Moreover, $\eta\le \kappa/n$ almost surely, and hence, after enlarging
	$C_\kappa$,
	\begin{equation}\label{eq:delta_prob_kappa}
		\Prob\left(
		\delta_n>C_\kappa\sqrt{\frac{x+\log n}{n}}
		\right)\le 4e^{-x}.
	\end{equation}
\end{theorem}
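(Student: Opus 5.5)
The plan is to reduce the discrepancy $D$ to a maximal deviation bound for centred i.i.d.\ partial sums. Let $\mu:=\E r_0\in[\kappa^{-1},1]$ and $X_i:=r_i-\mu$. These are i.i.d., mean zero, and bounded in an interval of length at most $1-\kappa^{-1}\le 1$. For every starting index $p$ and length $\ell$,
\[
A_{p,\ell}-\frac{\ell}{n}S=\Bigl(\sum_{j=0}^{\ell-1}X_{p+j}\Bigr)-\frac{\ell}{n}\sum_{i=0}^{n-1}X_i ,
\]
because the deterministic parts $\ell\mu-\frac{\ell}{n}n\mu$ cancel. This gives
\[
D\le 2\max_{p}\max_{1\le \ell\le n}\Bigl|\sum_{j=0}^{\ell-1}X_{p+j}\Bigr| .
\]
A cyclic window sum either is a difference of two ordinary prefix sums $M_k:=\sum_{i<k}X_i$, or, when it wraps around, equals $M_n$ minus such a difference. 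Hence $D\le C\max_{0\le k\le n}|M_k|$.

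The core estimate then comes from maximal Hoeffding inequalities. I will apply Hoeffding's inequality together with a maximal inequality, either Doob's submartingale inequality applied to $e^{\pm tM_k}$ or simply a union bound over the $n+1$ values of $k$. For the two signs this gives
\[
\Prob\Bigl(\max_k |M_k|>t\Bigr)\le 2\cdot 2\,e^{-2t^2/n}\cdot(\text{at most } n+1),
\]
where the factor $n+1$ comes from the union bound. Choosing $t=\sqrt{n(x+\log(n+1))/2}$ makes the right-hand side at most $4e^{-x}$, which is where the factor $4$ in \eqref{eq:Delta_prob_kappa} comes from. The Doob route gives the cleaner version without the $\log n$ term, but the union bound suffices and matches the stated rate.

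To finish, I divide by $S$. Since $r_i\ge\kappa^{-1}$ almost surely, $S\ge n/\kappa$. Therefore, off the bad event,
\[
\Delta=\frac{D}{S}\le \frac{C\kappa}{n}\sqrt{n(x+\log n)}=C_\kappa\sqrt{\frac{x+\log n}{n}} ,
\]
after absorbing $\log(n+1)\le 2\log n$ for $n\ge2$. For $\eta$, we have $r_{\max}\le 1$ and $S\ge n/\kappa$, so $\eta\le\kappa/n$ almost surely. Since $\kappa/n\le\kappa\sqrt{(x+\log n)/n}$ whenever $\log n\ge 1/n$, this holds for all $n\ge 2$; for $n=1$ the claim is trivial after enlarging the constant. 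Adding the two bounds yields \eqref{eq:delta_prob_kappa} with the same probability $4e^{-x}$.

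The only delicate step is the bookkeeping of the sign and wrap-around cases, so that the union bound produces exactly the constant $4$ and the $\log n$ term. Any slack can be absorbed into $C_\kappa$ by rescaling $x$, although this would require checking that the probability prefactor stays at $4$. If needed, I will use Doob's inequality, which applied separately to $\pm M_k$ gives prefactor $2$ with no union over $k$, and then union over the two window types.
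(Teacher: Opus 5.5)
Your proof is correct, and it differs from the paper's mainly in the decomposition. The paper does not center. It applies Hoeffding to $A_{p,\ell}-\ell\mu$ for each of the at most $n^2$ windows, and the union bound costs $2e^{-x}$. A second Hoeffding bound on $S-n\mu$ costs another $2e^{-x}$, which is where the paper's factor $4$ comes from. The two are then combined via $|A_{p,\ell}-\tfrac{\ell}{n}S|\le|A_{p,\ell}-\ell\mu|+\tfrac{\ell}{n}|S-n\mu|$.

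You instead center first, so the deterministic parts cancel exactly and $S-n\mu=M_n$ is already one of your prefix sums. The wrap-around bookkeeping then reduces all $n^2$ windows to the $n+1$ quantities $M_k$, at the cost of a harmless constant ($D\le 6\max_k|M_k|$).

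This buys a smaller union (order $n$ rather than $n^2$ events) and, more importantly, a martingale structure. As you note, Doob's maximal inequality for $e^{\pm\lambda M_k}$, combined with Hoeffding's lemma, gives $\max_k|M_k|\le\sqrt{nx/2}$ with probability at least $1-2e^{-x}$. So the $\log n$ term is unnecessary and $\Delta=O(n^{-1/2})$ in probability. This is slightly sharper than the stated bound and implies it. The paper's route is more direct and avoids the wrap-around case analysis, but it is tied to the union bound and hence to the $\log n$.

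One small bookkeeping remark: the two-sided Hoeffding bound $2e^{-2t^2/n}$ already accounts for both signs, so your factor $2\cdot 2$ double counts. The honest union bound gives $2e^{-x}$, which still satisfies the claimed $4e^{-x}$.
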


\begin{proof}
	Let $\mu:=\E r_0$. For each $(p,\ell)$, Hoeffding's inequality gives
	\[
	\Prob\!\left(|A_{p,\ell}-\ell\mu|>t\right)
	\le
	2\exp\!\left(
	-\frac{2t^2}{\ell(1-\kappa^{-1})^2}
	\right).
	\]
	Since $\ell\le n$, taking
	$t=C_\kappa\sqrt{n(x+\log n)}$ and a union bound over the at most
	$n^2$ pairs $(p,\ell)$ yields $\max_{p,\ell}|A_{p,\ell}-\ell\mu|
	\le
	C_\kappa\sqrt{n(x+\log n)}$ with probability at least $1-2e^{-x}$. A second Hoeffding bound gives $|S-n\mu|\le C_\kappa\sqrt{n(x+\log n)}$ with the same probability. On the intersection,
	\begin{equation*}
		\left|A_{p,\ell}-\frac{\ell}{n}S\right|
		\le
		|A_{p,\ell}-\ell\mu|+\frac{\ell}{n}|S-n\mu|
		\le
		C_\kappa\sqrt{n(x+\log n)}
	\end{equation*}
	for all $(p,\ell)$, hence $D\le C_\kappa\sqrt{n(x+\log n)}$. Since $S\ge n/\kappa$ almost surely,
	Eq.~\eqref{eq:Delta_prob_kappa} follows. Also $r_{\max}\le 1$ and
	$S\ge n/\kappa$ almost surely, so $\eta=r_{\max}/S\le \kappa/n$, which
	proves Eq.~\eqref{eq:delta_prob_kappa}.
\end{proof}

\begin{corollary}[Uniform spacing of random resistance coordinates]\label{cor:grid_spacing_kappa}
	Under Eq.~\eqref{eq:iid_kappa_assumption}, for every $x\ge 0$,
	\[
	\Prob\left(
	\max_{0\le i\le n}
	\left|
	\frac{s_i}{S}-\frac{i}{n}
	\right|
	>
	C_\kappa\sqrt{\frac{x+\log n}{n}}
	\right)\le 4e^{-x}.
	\]
\end{corollary}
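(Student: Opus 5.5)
This corollary is a direct rewriting of Theorem~\ref{thm:discrepancy_iid_kappa} through the node discrepancy bound \eqref{eq:node_discrepancy}. The plan is to show the deterministic inclusion
\[
\left\{\max_{0\le i\le n}\left|\frac{s_i}{S}-\frac in\right|>t\right\}\subseteq\{\Delta>t\}
\quad\text{for every } t\ge0,
\]
and then take $t=C_\kappa\sqrt{(x+\log n)/n}$ with the same constant $C_\kappa$ as in \eqref{eq:Delta_prob_kappa}. Monotonicity of probability then gives the bound $4e^{-x}$.

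The deterministic inclusion comes from unpacking the definitions with $y_i=s_i/S$.
\begin{itemize}[leftmargin=1.5em]
\item For $1\le i\le n$ we have $s_i=A_{0,i}$, the $i$-step cumulative resistance starting at $p=0$.
\item Hence $|s_i-\tfrac inS|\le D$ by the definition of $D$ in \eqref{eq:def_D}, where $p=0$ and $\ell=i$ are admissible.
\item For $i=0$ the quantity is $0$, since $s_0=0$.
\item Dividing by $S>0$ gives $\max_{0\le i\le n}|y_i-i/n|\le D/S=\Delta$. This is exactly \eqref{eq:node_discrepancy}.
\end{itemize}
The bound holds almost surely, because it holds for every realization of the conductances.

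Combining the two steps, the event in the corollary is contained in $\{\Delta>C_\kappa\sqrt{(x+\log n)/n}\}$. Its probability is therefore at most $4e^{-x}$ by Theorem~\ref{thm:discrepancy_iid_kappa}. No step poses a real obstacle. The only point needing care is the index range: it must include $i=n$, where $s_n=S$ and the deviation vanishes, and $i=0$, which is covered trivially. The maximum over $p$ in the definition of $D$ is only used at $p=0$, so the bound is in fact slightly stronger than needed.
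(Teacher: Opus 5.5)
Your proposal is correct and matches the paper's proof. The paper combines the deterministic node discrepancy bound \eqref{eq:node_discrepancy} with the tail bound \eqref{eq:Delta_prob_kappa} of Theorem~\ref{thm:discrepancy_iid_kappa}, exactly as you do; your check that $s_i=A_{0,i}$ simply spells out why \eqref{eq:node_discrepancy} holds for every realization.
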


\begin{proof}
	This is immediate from Eq.~\eqref{eq:node_discrepancy} and
	Theorem~\ref{thm:discrepancy_iid_kappa}.
\end{proof}

\begin{theorem}[High-probability entry into the deterministic regime]\label{thm:random_to_det_transfer}
	Assume Eq.~\eqref{eq:iid_kappa_assumption}. Then, for every $x\ge 0$,
	with probability at least $1-4e^{-x}$,
	\begin{equation}\label{eq:delta_random_event}
		\delta_n\le C_\kappa\sqrt{\frac{x+\log n}{n}}.
	\end{equation}
	If the right-hand side is at most the deterministic threshold
	$\delta_0$, then all conclusions of
	Section~\ref{subsec:deterministic_discrepancy} hold on this event with
	this upper bound substituted for $\delta_n$.
\end{theorem}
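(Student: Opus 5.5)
The plan is to combine the concentration estimate of Theorem~\ref{thm:discrepancy_iid_kappa} with the fact that every result in Section~\ref{subsec:deterministic_discrepancy} and Appendix~\ref{app:auxiliary-results} is a statement about a \emph{fixed} cycle. Each of those results depends on the conductances only through the bounds \eqref{eq:det_kappa_bounds}, the native-scale condition \eqref{eq:w_native_scale}, the size $n\ge n_0$, and the smallness $\delta_n\le\delta_0$.

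\textbf{Step 1: the event.} Fix $x\ge0$ and let
\[
\Omega_x:=\Bigl\{\delta_n\le C_\kappa\sqrt{(x+\log n)/n}\Bigr\}.
\]
By \eqref{eq:delta_prob_kappa}, $\Prob(\Omega_x)\ge 1-4e^{-x}$. This gives \eqref{eq:delta_random_event} immediately.

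\textbf{Step 2: the i.i.d.\ model meets the pathwise assumptions.} Under \eqref{eq:iid_kappa_assumption}, every realization satisfies $1\le c_i\le\kappa$, so \eqref{eq:det_kappa_bounds} holds almost surely, and so do the derived facts $r_i\in[\kappa^{-1},1]$ and $S\asymp_\kappa n$. The chord condition \eqref{eq:w_native_scale} is a deterministic design assumption on $\hat w$ relative to $\bar r$. It therefore holds for every realization, or is imposed on each one.

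\textbf{Step 3: transfer the conclusions.} Suppose the right-hand side $\bar\delta:=C_\kappa\sqrt{(x+\log n)/n}$ is at most $\delta_0$. Then on $\Omega_x$ the realized cycle satisfies $\delta_n\le\bar\delta\le\delta_0$, so it lies in the deterministic regime. We can then apply, $\omega$ by $\omega$, Theorem~\ref{thm:fiedler-sine}, Corollary~\ref{cor:second-cosine}, Lemmas~\ref{lem:spectral-localization}--\ref{lem:verify_abstract_chord}, Theorem~\ref{thm:main_optimal_disc}, and Corollary~\ref{cor:main_optimal_disc_ratio}.

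Each conclusion of those results is an upper bound that is nondecreasing in $\delta_n$. Typical forms are $C\delta_n/n^2$, $C\sqrt{\delta_n/n}$, $C[(\zeta/S)^2+\sqrt{\delta_n}]$, and lower bounds of the type $1-C\delta_n$. Replacing $\delta_n$ by the larger $\bar\delta$ therefore keeps every inequality valid, which is the "substituted upper bound" claim.

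\textbf{Main obstacle.} The delicate point is checking that no constant in the deterministic chain secretly depends on the realization. One might worry about $\phi$, $\sigma$, $\tau$, $\rho_0$, $\theta_0$, or the choice of $\widehat p,\widehat q$.

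I would handle this by inspecting the appendix proofs. The phases and signs are existentially quantified inside each statement. The constants $C_\kappa$, $\rho_0(\kappa)$, $\theta_0$, $\delta_0(\kappa,C_w)$, and $n_0(\kappa,C_w)$ are produced from $\kappa$ and $C_w$ alone. The set $\mathcal A_\zeta$ is nonempty by Corollary~\ref{cor:near_antipodal_det} on every realization.

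A second, minor point is measurability. I would avoid it by phrasing the result as an inclusion of events: the bad event is contained in $\Omega_x^c$, so its outer probability is at most $4e^{-x}$.
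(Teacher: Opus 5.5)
Your proposal is correct and follows the paper's own route: the paper's proof notes that \eqref{eq:delta_random_event} is exactly \eqref{eq:delta_prob_kappa}, and that the deterministic theory of Section~\ref{subsec:deterministic_discrepancy} then applies pathwise on that event, which is your Steps~1 and~3. Your extra bookkeeping (monotonicity of every bound in $\delta_n$, realization-independence of the constants, and pathwise validity of \eqref{eq:det_kappa_bounds}) makes explicit what the paper leaves implicit; the one hypothesis you list but never check in Step~3, $n\ge n_0$, holds automatically once $\delta_0$ is replaced by $\min\{\delta_0,1/n_0\}$, because $\delta_n\ge\eta=r_{\max}/S\ge 1/n$.
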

\begin{proof}
	Equation~\eqref{eq:delta_random_event} is exactly
	Eq.~\eqref{eq:delta_prob_kappa}, and the rest is the deterministic theory
	applied on that event.
\end{proof}

\paragraph{Interpretation.}
The high-probability result concerns the resistance-discrepancy parameter
$\delta_n$, not the pointwise variability of individual conductances. Thus,
the i.i.d. bounded model does not require the edge conductances to be nearly
equal. Rather, for each fixed conductance ratio $\kappa$, the normalized
resistance arclength coordinates become uniformly distributed around the
cycle with high probability as $n$ grows. In particular,
Theorem~\ref{thm:random_to_det_transfer} shows that
\[
\delta_n = O_{\mathbb P}\!\left(\sqrt{\frac{\log n}{n}}\right),
\]
and therefore the deterministic small-discrepancy regime is reached with
probability tending to one whenever the deterministic threshold
$\delta_0$ is fixed. The constants depend on $\kappa$, so the statement
should be interpreted for fixed or moderately bounded conductance ratios,
not for regimes in which $\kappa$ grows arbitrarily with $n$.

\section{Proofs of main-text statements}
\label{app:main-proofs}
For readability, the main text states the principal claims and places the proofs here.

\subsection{Proof of Lemma~\ref{lem:Rab_cycle}}
\begin{proof}[Proof of Lemma~\ref{lem:Rab_cycle}]
	The terminals $v_a$ and $v_b$ split the cycle into two internally disjoint paths whose total resistances are $d(a,b)$ and $d(b,a)$.
	The two paths are in parallel, so
	\[
	R_{ab}=\left(\frac1{d(a,b)}+\frac1{d(b,a)}\right)^{-1}
	=\frac{d(a,b)d(b,a)}{T}.
	\]
\end{proof}

\subsection{Proof of Lemma~\ref{lem:res_kir_update}}
\begin{proof}[Proof of Lemma~\ref{lem:res_kir_update}]
	Because $\boldsymbol{b}^\top\boldsymbol{1}=0$, the pseudoinverse Sherman--Morrison identity gives $L_{p,q}(w)^\dagger
	=L^\dagger-
	\frac{wL^\dagger\boldsymbol{b}\boldsymbol{b}^\top L^\dagger}{1+w\boldsymbol{b}^\top L^\dagger\boldsymbol{b}}$. Since $\boldsymbol{b}^\top L^\dagger\boldsymbol{b}=R_{pq}$, part (i) follows.
	For $\boldsymbol{g}_{uv}:=\boldsymbol{e}_u-\boldsymbol{e}_v$, $R_{uv}(w)=R_{uv}-\frac{w(\boldsymbol{g}_{uv}^\top L^\dagger\boldsymbol{b})^2}{1+wR_{pq}}$.
	The identity $(\boldsymbol{e}_u-\boldsymbol{e}_v)^\top L^\dagger(\boldsymbol{e}_p-\boldsymbol{e}_q)
	=\frac12(R_{uq}+R_{vp}-R_{up}-R_{vq})$ gives part (ii).
	Summing part (iii) over all unordered pairs gives Eq.\eqref{eq:Kf_update_cycle_compact}--Eq.\eqref{eq:Kf_improvement_full}, and Eq.\eqref{eq:Kf_cycle_sum} follows from Lemma~\ref{lem:Rab_cycle}.
\end{proof}

\subsection{Proof of Theorem~\ref{thm:R_Kf_quant}}
\begin{proof}[Proof of Theorem~\ref{thm:R_Kf_quant}]
	Parts (i) and (ii) follow by differentiating Eq.\eqref{eq:Rpq_update_cycle}.
	For (iii), the coefficient $w/(1+wR_{pq})$ is increasing and concave in $w$.
	The square-sum in Eq.\eqref{eq:Kf_improvement_full} is strictly positive: taking the term $\{u,v\}=\{p,q\}$ gives
	\[
	R_{pq}+R_{qp}-R_{pp}-R_{qq}=2R_{pq}>0.
	\]
	Therefore $\mathcal I_{p,q}(w)$ is strictly increasing for $w>0$, and $K_f(L_{p,q}(w))=K_f(L)-\mathcal I_{p,q}(w)$ is strictly decreasing.
\end{proof}

\subsection{Proof of Corollary~\ref{cor:selection_RK}}
\begin{proof}[Proof of Corollary~\ref{cor:selection_RK}]
	Part (i) concerns the endpoint resistance reduction. From \eqref{eq:Rpq_update_cycle},
	\[
	\Delta R_{pq}(w)=R_{pq}-R_{pq}(w)=\frac{wR_{pq}^2}{1+wR_{pq}}.
	\]
	For a fixed pair $(p,q)$, this quantity is strictly increasing in $w$, so the optimal weight is $w=\hat{w}$. With $w=\hat{w}$ fixed and $x=R_{pq}>0$,
	\[
	\frac{\mathrm d}{\mathrm dx}\frac{\hat{w}x^2}{1+\hat{w}x}
	=\frac{\hat{w}x(2+\hat{w}x)}{(1+\hat{w}x)^2}>0.
	\]
	Hence the endpoint resistance reduction is maximized by maximizing the initial resistance $R_{pq}=A_{pq}B_{pq}/T$, which measures how balanced the two complementary arcs are.
	
	Part (ii) follows from the exact Kirchhoff index update \eqref{eq:Kf_update_cycle_compact}--\eqref{eq:Kf_improvement_full}. Theorem~\ref{thm:R_Kf_quant} establishes that the improvement $\mathcal{I}_{p,q}(w)$ is strictly increasing in $w$, so the optimal weight is again $w=\hat{w}$. The optimal pair $(p,q)$ is then the one that maximizes $\mathcal{I}_{p,q}(\hat{w})$ as given by \eqref{eq:Kf_improvement_full}. The presence of the denominator $1+\hat{w} R_{pq}$ implies that a simple maximization of the square-sum alone does not generally identify the optimal chord.
\end{proof}

\subsection{Proof of Lemma~\ref{lem:gain_ceiling}}
\begin{proof}[Proof of Lemma~\ref{lem:gain_ceiling}]
	The bound $0\le \Delta_{p,q}(w)\le \gamma$ is exactly the interlacing
	statement in Lemma~\ref{lem:lambda_update}. For the second estimate,
	let $\boldsymbol{u}_1$ be a unit Fiedler vector of $L$. Since
	$\boldsymbol{u}_1\perp \boldsymbol{1}$, the Rayleigh--Ritz
	characterization gives
	\[
	\lambda_1(L_{p,q}(w))
	\le
	\boldsymbol{u}_1^\top L_{p,q}(w)\boldsymbol{u}_1
	=
	\lambda_1(L)+w\boldsymbol{u}_1^\top
	\boldsymbol{b}\boldsymbol{b}^\top\boldsymbol{u}_1.
	\]
	Because $\boldsymbol{u}_1^\top\boldsymbol{b}=u_{1,p}-u_{1,q}$, the
	claim follows.
\end{proof}

\subsection{Proof of Theorem~\ref{thm:lambda_quant}}
\begin{proof}[Proof of Theorem~\ref{thm:lambda_quant}]
	Equation~\eqref{eq:var_lam_w} follows from Eq.~\eqref{eq:rr_lambda1} and
	\[
	\boldsymbol{x}^\top L_{p,q}(w)\boldsymbol{x}
	=
	\boldsymbol{x}^\top L\boldsymbol{x}+w(x_p-x_q)^2.
	\]
	Since $\lambda_1(w)$ is the pointwise infimum of affine functions of
	$w$, it is concave; monotonicity follows because the penalty term is
	nonnegative. The derivative formula is the Hellmann--Feynman identity.
	The limit is the standard penalty limit enforcing the constraint
	$x_p=x_q$.
\end{proof}

\subsection{Proof of Lemma~\ref{lem:two_mode_resistance}}
\begin{proof}[Proof of Lemma~\ref{lem:two_mode_resistance}]
	For $k\ge 3$ and $\mu<\lambda_2$,
	\[
	\lambda_k-\mu\ge \lambda_k-\lambda_2\ge (1-\rho_0)\lambda_k.
	\]
	Summing the resulting termwise bound proves the claim.
\end{proof}

\subsection{Proof of Theorem~\ref{thm:comparison_two_mode}}
\begin{proof}[Proof of Theorem~\ref{thm:comparison_two_mode}]
	Let $\mu:=\lambda_1(L_{p,q}(w))=\lambda_1+t$, $0\le t\le \gamma$,
	and define $d:=\lambda_2-\mu=\gamma-t$.
	Then $\gamma-\Delta_{p,q}(w)=d$. If $d=0$, there is nothing to prove,
	so assume $d>0$. Since $\lambda_1<\lambda_2$, the eigenvalue
	$\lambda_1$ is simple. Moreover, \eqref{eq:comparison_hyp1} gives
	$\beta_1^2>0$ and $w>0$, so the rank-one perturbation is not orthogonal
	to the Fiedler eigendirection. Hence the eigenvalue branch issuing from
	$\lambda_1$ is strictly shifted upward. Therefore the updated algebraic
	connectivity cannot remain at the pole $\lambda_1$, and we have
	$\mu\in(\lambda_1,\lambda_2)$. The secular equation
	\eqref{eq:secular_spectral} is therefore valid at $\mu$ and becomes
	\begin{equation}\label{eq:secular_d_form}
		1-\frac{w\beta_1^2}{\gamma-d}+\frac{w\beta_2^2}{d}+wH(\mu)=0,
	\end{equation}
	where
	\[
	H(\mu):=\sum_{k=3}^{n-1}\frac{\beta_k^2}{\lambda_k-\mu}\ge 0.
	\]
	By Lemma~\ref{lem:two_mode_resistance},
	\begin{equation}\label{eq:H_tail_bound}
		H(\mu)\le \frac{T_{3+}}{1-\rho_0}.
	\end{equation}
	Rearranging \eqref{eq:secular_d_form} gives
	\begin{equation}\label{eq:secular_rearranged}
		\frac{\beta_1^2}{\gamma-d}
		=
		\frac1w+\frac{\beta_2^2}{d}+H(\mu).
	\end{equation}
	Multiplying by $d(\gamma-d)$ yields
	\[
	d\beta_1^2
	=
	(\gamma-d)\beta_2^2
	+
	d(\gamma-d)\left(\frac1w+H(\mu)\right).
	\]
	Since $0\le \gamma-d\le \gamma$, combining this with
	\eqref{eq:H_tail_bound} and \eqref{eq:comparison_dominance} gives
	\[
	d\beta_1^2
	\le
	\gamma\beta_2^2
	+
	d\gamma\left(\frac1w+\frac{T_{3+}}{1-\rho_0}\right)
	\le
	\gamma\beta_2^2+\theta_0 d\beta_1^2.
	\]
	Hence
	\[
	d\le \frac{\gamma\beta_2^2}{(1-\theta_0)\beta_1^2}.
	\]
	Using \eqref{eq:comparison_small_beta2} proves
	\eqref{eq:ceiling_deficit_gamma}.
	
	Finally, \eqref{eq:comparison_hyp1} implies
	\[
	\gamma\le \lambda_2\le \frac{L_0}{n^2}
	\le \frac{L_0}{w_0}\frac{w}{n},
	\]
	so \eqref{eq:ceiling_deficit_gamma} yields
	\eqref{eq:ceiling_deficit_wn}.
\end{proof}

\subsection{Proof of Theorem~\ref{thm:main_optimal_disc}}
\begin{proof}[Proof of Theorem~\ref{thm:main_optimal_disc}]
	Let $\varepsilon_{\zeta,n}$ be defined by \eqref{eq:eps_zn}. By
	Lemma~\ref{lem:verify_abstract_chord}, all hypotheses of
	Theorem~\ref{thm:comparison_two_mode} hold for
	$(\widehat p,\widehat q)$ with
	$\varepsilon=\varepsilon_{\zeta,n}$. Hence
	\[
	(\lambda_2-\lambda_1)-\Delta_{\widehat p,\widehat q}(w)
	\le
	C(\lambda_2-\lambda_1)\varepsilon_{\zeta,n}.
	\]
	Corollary~\ref{cor:ceiling_to_opt} converts this ceiling-deficit estimate
	into \eqref{eq:main_optimal_disc_gamma}. The additive estimate
	\eqref{eq:main_optimal_disc_additive} follows from
	\eqref{eq:ceiling_deficit_wn}. For $\zeta=r_{\max}$,
	\[
	\left(\frac{\zeta}{S}\right)^2=\eta^2\le \eta\le \delta_n\le \sqrt{\delta_n}
	\]
	after taking $\delta_0\le 1$, which gives
	\eqref{eq:main_optimal_disc_additive_native}. Finally, admissibility of
	$\zeta=r_{\max}$ follows from $r_{\max}\le 1$ and $S\ge n/\kappa$.
\end{proof}

\subsection{Proof of Corollary~\ref{cor:main_optimal_disc_ratio}}
\begin{proof}[Proof of Corollary~\ref{cor:main_optimal_disc_ratio}]
	By Theorem~\ref{thm:main_optimal_disc},
	\[
	0\le
	\Delta^\star(w)-\Delta_{\widehat p,\widehat q}(w)
	\le
	C_{\kappa,C_w}(\lambda_2-\lambda_1)
	\left[
	\left(\frac{\zeta}{S}\right)^2+\sqrt{\delta_n}
	\right].
	\]
	Divide by $\Delta^\star(w)$ and use
	Eq.~\eqref{eq:ratio_nondegenerate}. The upper bound is immediate from the
	definition of $(p^\star,q^\star)$. For $\zeta=r_{\max}$,
	\[
	(r_{\max}/S)^2=\eta^2\le \eta\le \delta_n\le \sqrt{\delta_n},
	\]
	after shrinking $\delta_0$ so that $\delta_0\le 1$.
\end{proof}

\subsection{Proof of Theorem~\ref{thm:main_optimal_random}}
\begin{proof}[Proof of Theorem~\ref{thm:main_optimal_random}]
	By Theorem~\ref{thm:random_to_det_transfer}, with probability at least
	$1-4e^{-x}$, $\delta_n\le C_\kappa\sqrt{\frac{x+\log n}{n}}$.
	
	On this event, Theorem~\ref{thm:main_optimal_disc} yields
	\[
	0\le
	\max_{\{p,q\}\in\mathcal{E}_{\mathrm{ch}}}\Delta_{p,q}(w)-\Delta_{\widehat p,\widehat q}(w)
	\le
	C_{\kappa,C_w}\frac{w}{n}
	\left[
	\left(\frac{\zeta}{S}\right)^2+\sqrt{\delta_n}
	\right].
	\]
	Substituting the bound on $\delta_n$ gives
	Eq.~\eqref{eq:main_random_additive}. For $\zeta=r_{\max}$, $\left(\frac{\zeta}{S}\right)^2=\eta^2\le \eta\le \delta_n$, which is absorbed by $\sqrt{\delta_n}$ once $\delta_0\le 1$. This
	proves Eq.~\eqref{eq:main_random_additive_native}.
\end{proof}
\end{document}